\documentclass[12pt]{elsarticle}

\usepackage{amsmath,amsthm,amsfonts,latexsym,amssymb,amscd,}
\usepackage[all, cmtip]{xy}
\usepackage{chemarr}
\usepackage{mathtools}
\usepackage{manfnt}
\pdfoutput=1

\pagestyle{headings}
\setlength{\textwidth}{36true pc}
\setlength{\headheight}{8true pt} 
\setlength{\oddsidemargin}{0 truept}
\setlength{\evensidemargin}{0 truept}
\setlength{\textheight}{572true pt}

\newtheorem{thm}{Theorem}[section]

\newtheorem{lem}[thm]{Lemma}

\newtheorem{prop}[thm]{Proposition}
\theoremstyle{definition}
\newtheorem{defn}[thm]{Definition}

\theoremstyle{remark}
\newtheorem{rem}[thm]{Remark}

\DeclareMathOperator{\Con}{Con}

\DeclareMathOperator{\Pol}{Pol}

\DeclareMathOperator{\Cg}{Cg}

\DeclareMathOperator{\Sg}{Sg}


\def\nat{\mathbb{N}}
\def\A{\mathbb{A}}
\def\var{\mathcal{V}}
\def\Join{\bigvee}
\def\Meet{\bigwedge}
\def\Union{\bigcup}
\def\meet{\wedge}
\def\join{\vee}

\def\union{\cup}

\newcommand*{\concat}{\mathbin{\raisebox{0.9ex}{$\smallfrown$}}}


\begin{document}

\title{Higher Commutator Theory for Congruence Modular Varieties}

\author{Andrew Moorhead}
\address{Department of Mathematics\\
University of Colorado\\
Boulder, CO 80309-0395\\
USA}

\fntext[fn1]{This material is based upon work supported by
the National Science Foundation grant no.\ DMS 1500254}

\begin{keyword}
higher commutator theory, congruence modularity, supernilpotence

\end{keyword}

\begin{abstract}
We develop the basic properties of the higher commutator for congruence modular varieties.
\end{abstract}

\maketitle

\section{Introduction}

This article develops some basic properties of a congruence lattice operation, called the higher commutator, for varieties of algebras that are congruence modular. The higher commutator is a higher arity generalization of the binary commutator, which was first defined in full generality in the seventies. While the binary commutator has a rich theory for congruence modular varieties, the theory of the higher arity commutator was poorly understood outside of the context of congruence permutability. 

We begin by discussing the evolution of centrality in Universal Algebra. Centrality is easily understood in groups as the commutativity of multiplication. Here, it plays an essential role in defining important group-theoretic notions such as abelianness, solvability, nilpotence, etc. Naturally, a systematic calculus to study centrality was developed.  For a group $G$ and $a, b \in G$, the group commutator of $a$ and $b$ is defined to be 

$$ [a, b] = a^{-1}b^{-1}ab.$$

Actually, one can go further and use group commutators to define a very useful operation on the lattice of normal subgroups of $G$. 

\begin{defn}\label{def:groupcom}
Suppose that $G$ is a group and $M$ and $N$ are normal subgroups of $G$. The group commutator of $M$ and $N$ is defined to be $$ [M,N]=  \Sg_G (\{ [m,n] : m \in M, n\in N \})$$

\end{defn} 

Suppose that $f: G \rightarrow H$ is a surjective homomorphism and $\{N_i: i \in I \}$ are normal subgroups of $G$. The following properties are easy consequences of Definition \ref{def:groupcom}, where $\meet$ and $\join$ denote the operations of meet and join in the lattice of normal subgroups of $G$:

\begin{enumerate}
\item $[M,N] \subset M \meet N$,
\item $[f(M), f(N)] = f([M,N])$,
\item $[M, N] = [N,M]$,
\item $[M , \Join_{i\in I} N_i] = \Join_{i \in I} [ M, N_i]$,
\item For any normal subgroup $K$ of $G$ contained in $M\meet N$, the elements of $M/K$ commute with $N / K$ if and only if $[M,N] \subset K$.

\end{enumerate}

Rings have an analogous commutator theory. For two ideals $I, J$ of a ring $R$ the commutator is $[I,J] = IJ -JI$. This operation satisfies the same basic properties as the commutator for groups and allows one to analogously define abelian, solvable and nilpotent rings. As it turns out, the notion of centrality and the existence of a well-behaved commutator operation is not an idiosyncrasy of groups or rings. In \cite{jdhsmith}, J.D.H. Smith defined a language-independent type of centrality that generalized the known examples. He then used this definition to show that any algebra belonging to a Mal'cev variety came equipped with a commutator as powerful as the commutator for groups or rings.

Hagemann and Hermann later extended the results of Smith to congruence modular varieties in \cite{hh}. The language-independent definition of centrality allows for language-independent definitions of abelianness and related notions such solvability and nilpotence. The existence of a robust commutator for a congruence modular variety means that these definitions are powerful and well-behaved, and provide an important tool to study the consequences of congruence modularity. For example, quotients of abelian algebras that belong to a modular variety are abelian, but this need not be true in general. 

The importance of these investigations was immediately apparent and the theory was rapidly developed, see \cite{fm} and \cite{gumm}. While the entirety of the theory is too broad for this introduction, we do mention an aspect related to nilpotence, because it is a prelude to the higher arity commutator. 

Roger Lyndon showed in \cite{lyndonnilgroup} that the equational theory of a nilpotent group is finitely based. Now, finite nilpotent groups are the product of their Sylow subgroups, so for finite groups Lyndon's result states that a group that is a product of $p$-groups has a finite basis for its equational theory. A result of Michael Vaughan-Lee, with an improvement due to Ralph Freese and Ralph McKenzie, generalizes this finite basis result to finite algebras generating a modular variety that are a product of prime power order nilpotent algebras, see \cite{fm}. Keith Kearnes showed in \cite{smallfreespec} that for a modular variety the algebras that are the product of prime power order nilpotent algebras are exactly the algebras that generate a variety with a small growth rate of the size of free algebras. 

Note that while for the variety of groups the condition of being a product of prime power order nilpotent algebras is equivalent to being nilpotent, this condition is in general stronger than nilpotence. This stronger condition is now known as \textbf{supernilpotence}, which is definable from the higher arity commutator that is the subject of our work, see \cite{aichmud}.

The definition of higher centrality was first introduced formally by Andrei Bulatov, see \cite{buldef}.  Bulatov was interested in counting the number of distinct polynomial clones on a finite set that contain a Mal'cev operation. Although this problem was solved in \cite{numfinalg} using other methods, higher commutators have found other important uses. In \cite{mayrbenz}, supernilpotence is shown to be an obstacle to a Mal'cev algebra having a natural duality. Also, as noted earlier, finite supernilpotent algebras that generate congruence permutable varieties must have a finitely based equational theory. 

Erhard Aichinger and Neboj{\v s}a Mundrinski developed the basic properties of the higher commutator for congruence permutable varieties, see \cite{aichmud}. In \cite{orsalrel}, Jakub Opr{\v s}al contributed to the properties of the higher commutator for Mal'cev varieties by developing a relational description that is similar to the original definition of centrality used by J.D.H. Smith, Hagemann and Hermann. 

The structure of this article is as follows: In Section 2 we introduce higher centrality, recall some important characterizations of congruence modularity and develop some notation. The main properties of the higher commutator are shown in Sections 3-5. In Section 6 we prove that for congruence modular varieties the higher commutator is equivalent to a higher commutator defined with a two term condition.

\section{Preliminaries}\label{prelims}

\subsection{Background}

We begin with the term condition definition of the $k$-ary commutator as introduced by Bulatov in \cite{buldef}. The following notation is used. Let $\A$ be an algebra with $\delta \in \Con(\A)$. A tuple will be written in bold: $\textbf{x} = (x_0,..., x_{n-1})$. The length of this tuple is denoted by $|\textbf{x}|$. For two tuples $\textbf{x}, \textbf{y}$ such that $|\textbf{x}| = |\textbf{y}|$ we write $\textbf{x} \equiv_\delta \textbf{y}$ to indicate that $x_i \equiv_\delta y_i$ for $0\leq i < |\textbf{x}|$, where $x_i \equiv_\delta y_i$ indicates that $\langle x,y \rangle \in \delta$.

\begin{defn}\label{def:bulcen}

Let $\A$ be an algebra, $k\in \nat_{\geq 2}$, and choose $\alpha_0,\dots, \alpha_{k-1}, \delta \in \Con(\A)$. We say that \textbf{$\alpha_0,\dots,\alpha_{k-2}$ centralize $\alpha_{k-1}$ modulo $\delta$} if for all $f\in \Pol(\A)$ and tuples $\textbf{a}_0,\textbf{b}_0, \dots, \textbf{a}_{k-1}, \textbf{b}_{k-1}$ from $\A$ such that

\begin{enumerate}
\item
$\textbf{a}_i \equiv_{\alpha_i} \textbf{b}_i$ for each $i \in k$

\item
If $f(\textbf{z}_0,\dots,\textbf{z}_{k-2},\textbf{a}_{k-1}) \equiv_\delta f(\textbf{z}_0,\dots,\textbf{z}_{k-2},\textbf{b}_{k-1})$ for all $(\textbf{z}_0,\dots, \textbf{z}_{k-2}) \in \{\textbf{a}_0, \textbf{b}_0 \}\times \dots \times \{\textbf{a}_{k-2}, \textbf{b}_{k-2} \} \setminus \{ (\textbf{b}_0,\dots, \textbf{b}_{k-2}) \}$

\end{enumerate}
we have that $$f(\textbf{b}_0, \dots, \textbf{b}_{k-2}, \textbf{a}_{k-1}) \equiv_\delta f(\textbf{b}_0, \dots, \textbf{b}_{k-2}, \textbf{b}_{k-1})$$

This condition is abbreviated as $C(\alpha_0, \dots, \alpha_{k-1}; \delta)$.
\end{defn}
It is easy to see that if for some collection $\{\delta_i: i\in I\} \subset \Con(\A)$ we have $C(\alpha_0, \dots, \alpha_{k-1}; \delta_i)$, then $C(\alpha_0, \dots , \alpha_{k-1}; \Meet_{i\in I} \delta_i )$. We therefore make the following
\begin{defn}\label{bulcomm}
Let $\A$ be an algebra, and let $\alpha_0, \dots, \alpha_{k-1} \in \Con(\A)$ for $k\geq 2$. The \textbf{$k$-ary commutator of $\alpha_0, \dots, \alpha_{k-1}$} is defined to be
$$ [\alpha_0,\dots, \alpha_{k-1}] = \Meet \{\delta: C(\alpha_0,\dots,\alpha_{k-1}; \delta) \}$$

\end{defn}

\noindent The following properties are immediate consequences of the definition:
\begin{enumerate}
\item
$[\alpha_0, \dots, \alpha_{k-1}] \leq \Meet_{0\leq i \leq k-1}\alpha_i$,
\item
For $\alpha_0 \leq \beta_0, \dots, \alpha_{k-1} \leq \beta_{k-1}$ in $\Con(\A)$, we have $[\alpha_0, \dots , \alpha_{k-1}] \leq [\beta_0, \dots , \beta_{k-1}]$ (Monotonicity),
\item
$[\alpha_0,\dots,\alpha_{k-1}] \leq [\alpha_1, \dots, \alpha_{k-1}]$.

\end{enumerate}

\noindent We will demonstrate the following additional properties of the higher commutator for a congruence modular variety $\var$, which are developed for the binary commutator in \cite{fm}:

\begin{enumerate}
\item[(4)] $ [\alpha_0,..., \alpha_{k-1}] = [\alpha_{\sigma(0)},..., \alpha_{\sigma(k-1)}]$ for any permutation of $\sigma$ of the congruences $\alpha_0,..., \alpha_{k-1}$ (Symmetry),

\item[(5)]$[\Join_{i\in I} \gamma_i, \alpha_1,..., \alpha_{k-1}] = \Join_{i\in I} [\gamma_i, \alpha_1,..., \alpha_{k-1}]$  (Additivity),

\item[(6)]  $[\alpha_0 , ..., \alpha_{k-1}] \join \pi = f^{-1}([f(\alpha_0 \join \pi),..., f(\alpha_{k-1} \join \pi))])$, where $f:\A \rightarrow \mathbb{B}$ is a surjective homomorphism with kernel $\pi$ (Homomorphism property),

\item[(9)] Kiss showed in \cite{threeremarks} that for congruence modular varieties the binary commutator is equivalent to a binary commutator defined with a two term condition. This is true for the higher commutator also.

\end{enumerate}

\subsection{Day Terms and Shifting}\label{DayTerms}
The following classical results about congruence modularity are needed. For proofs see \cite{dayterms}, \cite{gumm} and \cite{fm}. 
\begin{prop}[Day Terms]\label{prop:day}
A variety $\var$ is congruence modular if and only if there exist term operations $m_e(x,y,z,u)$ for $e\in n+1$ satisfying the following identities:

\begin{enumerate}
\item $m_e(x,y,y,x) \approx x$ for each $0\leq e \leq n$,
\item $m_0(x,y,z,u) \approx x$,
\item $m_n(x,y,z,u) \approx u$,
\item $m_e(x,x,u,u) \approx m_{e+1}(x,x,u,u) $ for even $e$, and
\item $m_e(x,y,y,u) \approx m_{e+1}(x,y,y,u)$ for odd $e$.

\end{enumerate}
\end{prop}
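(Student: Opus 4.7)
The plan is to prove both implications separately. The ``if'' direction (Day terms imply congruence modularity) is a term-rewriting argument inside an arbitrary algebra of $\var$, while the ``only if'' direction (congruence modularity forces Day terms) requires a free-algebra construction and is where the main difficulty lies.

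For the ``if'' direction, I would assume the Day terms $m_0,\dots,m_n$ and fix $\A \in \var$ together with $\alpha, \beta, \gamma \in \Con(\A)$ satisfying $\alpha \leq \gamma$; the non-trivial part of the modular law is the inclusion $(\alpha \vee \beta) \wedge \gamma \leq \alpha \vee (\beta \wedge \gamma)$. Given $(a,b)$ on the left with a connecting zigzag $a = c_0, c_1, \dots, c_k = b$ alternating between $\alpha$- and $\beta$-steps, I would form a two-dimensional Day matrix of elements $D_{e,i} := m_e(a, c_i, c_i, b)$ for $0 \leq e \leq n$ and $0 \leq i \leq k$. Identities (2) and (3) give $D_{0,i} = a$ and $D_{n,i} = b$; identity (5) gives $D_{e,i} = D_{e+1,i}$ for odd $e$; identity (1) applied to $m_e(a,c_i,c_i,a) = a$, combined with $a \equiv_\gamma b$, places every $D_{e,i}$ in $[a]_\gamma$; the horizontal moves inherit the $\alpha$-/$\beta$-character of $(c_i,c_{i+1})$ from the original zigzag; and identity (4), together with $\alpha \leq \gamma$, handles the remaining even-$e$ vertical transitions by a shifting-lemma-type argument. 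Reading off an appropriate path through the matrix produces a zigzag from $a$ to $b$ whose steps all lie in $\alpha \cup (\beta \wedge \gamma)$, exhibiting $(a,b) \in \alpha \vee (\beta \wedge \gamma)$.

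For the ``only if'' direction, I would work in the free algebra $F = F_\var(x, y, z, u)$ and form the principal congruences $\theta = \Cg^F(\{(x, y), (z, u)\})$, $\psi = \Cg^F(\{(y, z)\})$, and $\phi = \Cg^F(\{(x, u)\})$. The chain $x \equiv_\theta y \equiv_\psi z \equiv_\theta u$ gives $\phi \leq \theta \vee \psi$. Setting $\theta' := \theta \wedge (\phi \vee \psi)$, modularity of $\Con(F)$ yields $\theta' \vee \psi = \phi \vee \psi$. Thus there is a zigzag from $x$ to $u$ inside $\phi \vee \psi$ whose consecutive steps alternate between $\theta'$ and $\psi$. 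Each intermediate element is a quaternary term $m_e(x, y, z, u) \in F$. A $\theta$-step between consecutive terms translates, via the defining generators of $\theta$, into an equation under $x = y$ and $z = u$, namely identity (4); a $\psi$-step analogously gives identity (5); and membership in $[x]_{\phi \vee \psi}$---automatic from the zigzag sitting inside $\phi \vee \psi$---translates via the isomorphism $F / (\phi \vee \psi) \cong F_\var(x, y)$ (under $z \mapsto y$, $u \mapsto x$) into identity (1).

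The main obstacle is producing the correctly alternating refined zigzag in the ``only if'' direction. Knowing that $\phi \leq \theta' \vee \psi$ gives some zigzag abstractly, but to extract terms satisfying identities (1), (4), and (5) simultaneously, the even/odd parity of $e$ must match the alternation of $\theta'$- and $\psi$-steps in the zigzag, and the zigzag must begin and end at $x$ and $u$ rather than at arbitrary class representatives. Careful management of the inductive construction---in particular, padding with trivial steps where needed to enforce the parity---is the technical heart of Day's original argument. Complete accounts appear in \cite{dayterms}, \cite{gumm}, and \cite{fm}.
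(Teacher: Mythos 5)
The paper does not actually prove this proposition: it is Day's classical theorem, quoted as background with proofs delegated to \cite{dayterms}, \cite{gumm} and \cite{fm}, so your attempt has to be measured against those classical arguments. Your ``only if'' direction is exactly Day's proof and is correct: the congruences $\theta,\psi,\phi$ on $F_{\var}(x,y,z,u)$, the modularity step $\theta'\vee\psi=\phi\vee\psi$ with $\theta'=\theta\wedge(\phi\vee\psi)$, and the translation of $\theta'$-steps, $\psi$-steps, and membership in $[x]_{\phi\vee\psi}$ into identities (4), (5), (1), with (2) and (3) coming from the endpoints of the chain, is the standard argument; the parity bookkeeping you single out as the ``technical heart'' is in fact routine padding, while the real content (the use of $\theta'$ in place of $\theta$) you already have.

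The genuine gap is in the ``if'' direction, at the even-$e$ vertical transitions. Your observations about the matrix $D_{e,i}=m_e(a,c_i,c_i,b)$ are fine: rows $0$ and $n$ are constantly $a$ and $b$, identity (1) together with $a\equiv_\gamma b$ puts every entry in $[a]_\gamma$ so that horizontal steps lie in $\alpha\cup(\beta\wedge\gamma)$, and identity (5) makes consecutive rows coincide for odd $e$. But identity (4) applies only to argument patterns $(x,x,u,u)$, i.e.\ second argument equal to the first and fourth equal to the third, and no entry of your matrix ever has that pattern (it would force $c_i=a$ and $c_i=b$); your entries all have the $(x,y,y,u)$ pattern that feeds (1) and (5). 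The natural repair --- comparing $D_{e,i}$ with $q_e=m_e(a,a,b,b)$, where (4) does apply --- only yields that $\langle D_{e,i},q_e\rangle$ lies in $\gamma\wedge(\alpha\vee\beta)$, which is precisely the relation you are trying to prove is contained in $\alpha\vee(\beta\wedge\gamma)$, so the argument becomes circular; and the Shifting Lemma cannot relate values of the distinct operations $m_e$ and $m_{e+1}$ at a common tuple, since the only congruence available for the relevant side of a rectangle is again $\gamma\wedge(\alpha\vee\beta)$. This even-$e$ linking is the entire difficulty of this implication, and the classical treatments spend real work on it: Day's original proof runs a genuine induction on the length of the $\alpha,\beta$-chain, using auxiliary elements such as $m_e(a,c_i,c_{i+1},b)$, and Gumm's route first derives the Shifting Lemma from the identities and then needs a further nontrivial argument (applied beyond the single algebra $\A$) to reach modularity. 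As written, the step ``identity (4) plus a shifting-lemma-type argument'' would not go through.
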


\begin{prop}[Lemma 2.3 of \cite{fm}]\label{prop:shift}
Let $\var$ be a variety with Day terms $m_e$ for $e\in n+1$. Take $\delta \in \Con(\A)$ and assume $\langle b,d \rangle  \in \delta$. For a tuple $\langle a,c \rangle \in A^2$ the following are equivalent:

\begin{enumerate}
\item $\langle a,c \rangle \in \delta$,
\item $\langle m_e(a,a,c,c) , m_e(a,b,d,c) \rangle \in \delta$ for all $e\in n+1$.
\end{enumerate}
\end{prop}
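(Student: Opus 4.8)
The plan is to prove the two implications separately. The direction $(1)\Rightarrow(2)$ is a short congruence computation that uses only Day identity (1), while $(2)\Rightarrow(1)$ is a telescoping argument through all of the Day identities (2)--(5) together with the standing hypothesis $\langle b,d\rangle\in\delta$.

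For $(1)\Rightarrow(2)$, assume $\langle a,c\rangle\in\delta$. Since $\delta$ is a congruence, I would substitute into $m_e$ and collapse coordinates down to $a$. Using $c\equiv_\delta a$ in the third and fourth slots, $m_e(a,a,c,c)\equiv_\delta m_e(a,a,a,a)$, and Day identity (1) (with $x=y=a$) gives $m_e(a,a,a,a)=a$. Likewise, using $d\equiv_\delta b$ in the third slot and $c\equiv_\delta a$ in the fourth, $m_e(a,b,d,c)\equiv_\delta m_e(a,b,b,a)$, which equals $a$ again by identity (1) (with $x=a$, $y=b$). Transitivity then gives $\langle m_e(a,a,c,c),m_e(a,b,d,c)\rangle\in\delta$ for every $e\in n+1$.

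For $(2)\Rightarrow(1)$, the idea is to build the chain $a=m_0(a,a,c,c)\equiv_\delta m_1(a,a,c,c)\equiv_\delta\cdots\equiv_\delta m_n(a,a,c,c)=c$, whose endpoints come from Day identities (2) and (3). It remains to bridge consecutive terms $m_e(a,a,c,c)$ and $m_{e+1}(a,a,c,c)$ for $e=0,\dots,n-1$. When $e$ is even, identity (4) gives literal equality $m_e(a,a,c,c)=m_{e+1}(a,a,c,c)$, so nothing is needed. When $e$ is odd, I would run
$$m_e(a,a,c,c)\equiv_\delta m_e(a,b,d,c)\equiv_\delta m_e(a,b,b,c)=m_{e+1}(a,b,b,c)\equiv_\delta m_{e+1}(a,b,d,c)\equiv_\delta m_{e+1}(a,a,c,c),$$
where the first and last $\delta$-steps are the hypothesis (2) at indices $e$ and $e+1$, the second and fourth steps replace $d$ by $b$ (respectively $b$ by $d$) in the third coordinate using $\langle b,d\rangle\in\delta$, and the middle equality is Day identity (5) (with $x=a$, $y=b$, $u=c$). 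Concatenating across all $e$ yields $\langle a,c\rangle\in\delta$.

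The main obstacle is purely bookkeeping: tracking which Day identity applies at which parity of $e$, and checking that each "replace $d$ by $b$" move is a legitimate congruence step. There is no conceptual difficulty beyond organizing the telescope correctly; the substantive content is carried entirely by identities (2)--(5) and the relation $\langle b,d\rangle\in\delta$, with identity (1) needed only for the easy direction.
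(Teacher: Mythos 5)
Your proof is correct, and it is essentially the standard argument for Lemma 2.3 of Freese--McKenzie, which the paper cites rather than reproduces: the easy direction via Day identity (1), and the converse by telescoping $a = m_0(a,a,c,c) \equiv_\delta \cdots \equiv_\delta m_n(a,a,c,c) = c$, using identity (4) at even indices and the detour through $m_e(a,b,d,c)$, $m_e(a,b,b,c) = m_{e+1}(a,b,b,c)$ at odd indices. All congruence substitutions and parity bookkeeping check out.
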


\begin{lem}[The Shifting Lemma] \label{prop:shiftlemma}
Let $\var$ be a congruence modular variety, and take $\A \in \var$. Take $\theta_1 , \theta_2\in \Con(\A)$ and $\gamma \geq \theta_1 \meet \theta_2$. Suppose $a,b,c,d \in A$ are such that $\langle a,b \rangle, \langle c,d \rangle \in \theta_1$, $\langle a,c \rangle, \langle b,d \rangle \in \theta_2$ and $\langle b,d \rangle \in \gamma$. Then $\langle a,c \rangle \in \gamma$. Pictorially,

$$\xymatrix{
a   \ar@{-}^{\theta_1}[r]  \ar@{-}_{\theta_2}[d]&   b\ar@{-}[d] \ar@{-}@/^/ ^\gamma[d]\\
c \ar@{-}[r]   &   d
}
\textnormal{            \emph{implies}                 }
\xymatrix{
a   \ar@{-}^{\theta_1}[r]  \ar@{-}_{\theta_2}[d] \ar@{-}@/^/ ^\gamma[d]&   b\ar@{-}[d] \ar@{-}@/^/ ^\gamma[d]\\
c \ar@{-}[r]   &   d
}
$$

\end{lem}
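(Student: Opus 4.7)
The plan is to reduce the problem to Proposition \ref{prop:shift} and then verify the resulting claim by a coordinate-wise check in each of $\theta_1$ and $\theta_2$ separately. Since $\langle b,d\rangle \in \gamma$ by hypothesis, Proposition \ref{prop:shift} applied with $\delta=\gamma$ reduces $\langle a,c\rangle \in \gamma$ to the assertion that $\langle m_e(a,a,c,c),\,m_e(a,b,d,c)\rangle \in \gamma$ for every $e\in n+1$. Because $\gamma \geq \theta_1 \meet \theta_2$ by hypothesis, it suffices to show each such pair in fact lies in $\theta_1 \meet \theta_2$.

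The $\theta_1$ component is routine: the tuples $(a,a,c,c)$ and $(a,b,d,c)$ differ only in positions two and three, and the entries there are $\theta_1$-related by $a\,\theta_1\,b$ and $c\,\theta_1\,d$. Applying $m_e$ coordinate-wise then yields $m_e(a,a,c,c)\,\theta_1\,m_e(a,b,d,c)$.

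The $\theta_2$ component is the step that actually uses the parallelogram hypotheses nontrivially, and I expect it to be the main place one has to think, because $(a,a,c,c)$ and $(a,b,d,c)$ are not coordinate-wise $\theta_2$-related (we do not know $a\,\theta_2\,b$ or $c\,\theta_2\,d$). The trick I plan to use is to route both values through the common point $c$ via the Day identity $m_e(x,y,y,x)\approx x$, which specializes to $m_e(c,c,c,c)=c$ and $m_e(c,d,d,c)=c$. Now $(a,a,c,c)\equiv_{\theta_2}(c,c,c,c)$ using $a\,\theta_2\,c$, and $(a,b,d,c)\equiv_{\theta_2}(c,d,d,c)$ using $a\,\theta_2\,c$ together with $b\,\theta_2\,d$. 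Applying $m_e$ to each gives $m_e(a,a,c,c)\,\theta_2\,c$ and $m_e(a,b,d,c)\,\theta_2\,c$, and transitivity produces the required $\theta_2$ relation.

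Combining the two halves, for every $e\in n+1$ we have $\langle m_e(a,a,c,c),\,m_e(a,b,d,c)\rangle \in \theta_1\meet\theta_2 \leq \gamma$, and Proposition \ref{prop:shift} then concludes $\langle a,c\rangle \in \gamma$.
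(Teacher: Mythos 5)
Your proof is correct: the reduction via Proposition \ref{prop:shift} (with $\delta=\gamma$, using $\langle b,d\rangle\in\gamma$), the coordinate-wise $\theta_1$ check, and the $\theta_2$ check routing both $m_e(a,a,c,c)$ and $m_e(a,b,d,c)$ through $c$ by the identity $m_e(x,y,y,x)\approx x$ all go through, giving each pair in $\theta_1\meet\theta_2\leq\gamma$. The paper itself does not prove the Shifting Lemma but defers to the cited references, and your argument is exactly the standard Day-term proof that those sources (and the stated Proposition \ref{prop:shift}) are built on, so there is nothing to flag.
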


\subsection{Matrices and Centralization}

Take $\A \in \var$ and $\theta_0, \theta_1 \in \Con(\A)$. The development of the binary commutator in \cite{fm} relies on a so-called term condition that can be defined with respect to a subalgebra of $\A^4$, the subalgebra of $(\theta_0, \theta_1)$-matrices. We will now generalize these ideas to the higher commutator. To motivate the definitions, we state them for the binary commutator.

\begin{defn}[Binary]\label{FMmat}
Take $\A \in \var$, and $\theta_0, \theta_1, \in \Con(\A)$. Define 
$$ M(\theta_0, \theta_1) = \left\{ \left[  \begin{array}{cc} 
	t(\textbf{a}_0,\textbf{a}_1)  & t(\textbf{a}_0,\textbf{b}_1)\\
	t(\textbf{b}_0,\textbf{a}_1)& t(\textbf{b}_0,\textbf{b}_1)\\
				\end{array}\right] : t\in \Pol(\A), \textbf{a}_0 \equiv_{\theta_0} \textbf{b}_0, \textbf{a}_1 \equiv_{\theta_1} \textbf{b}_1 \right\} $$
\end{defn}
It is readily seen that $M(\theta_0, \theta_1)$ is a subalgebra of $\A^4$, with a generating set of the form

$$
\left\{ \left[  \begin{array}{cc} 
	x  & x\\
	y & y\\
				\end{array}\right] : x\equiv_{\theta_0} y
\right\} \Union \left\{ \left[  \begin{array}{cc} 
	x  & y\\
	x & y\\
				\end{array}\right]  : x\equiv_{\theta_1} y
\right\}
$$
The notion of centrality given in Definition \ref{def:bulcen} with congruences $\theta_0, \theta_1, \delta$ is expressible as a condition on $(\theta_0, \theta_1)$-matrices. This is shown in Figure \ref{fig:bincen}, where for $\delta \in \Con(\A)$ the implications depicted hold for all  $$\left[  \begin{array}{cc} 
	t(\textbf{a}_0,\textbf{a}_1)  & t(\textbf{a}_0,\textbf{b}_1)\\
	t(\textbf{b}_0,\textbf{a}_1)& t(\textbf{b}_0,\textbf{b}_1)\\
				\end{array}\right] \in M(\theta_0 , \theta_1) .$$
 
\begin{figure}[!ht]

\includegraphics{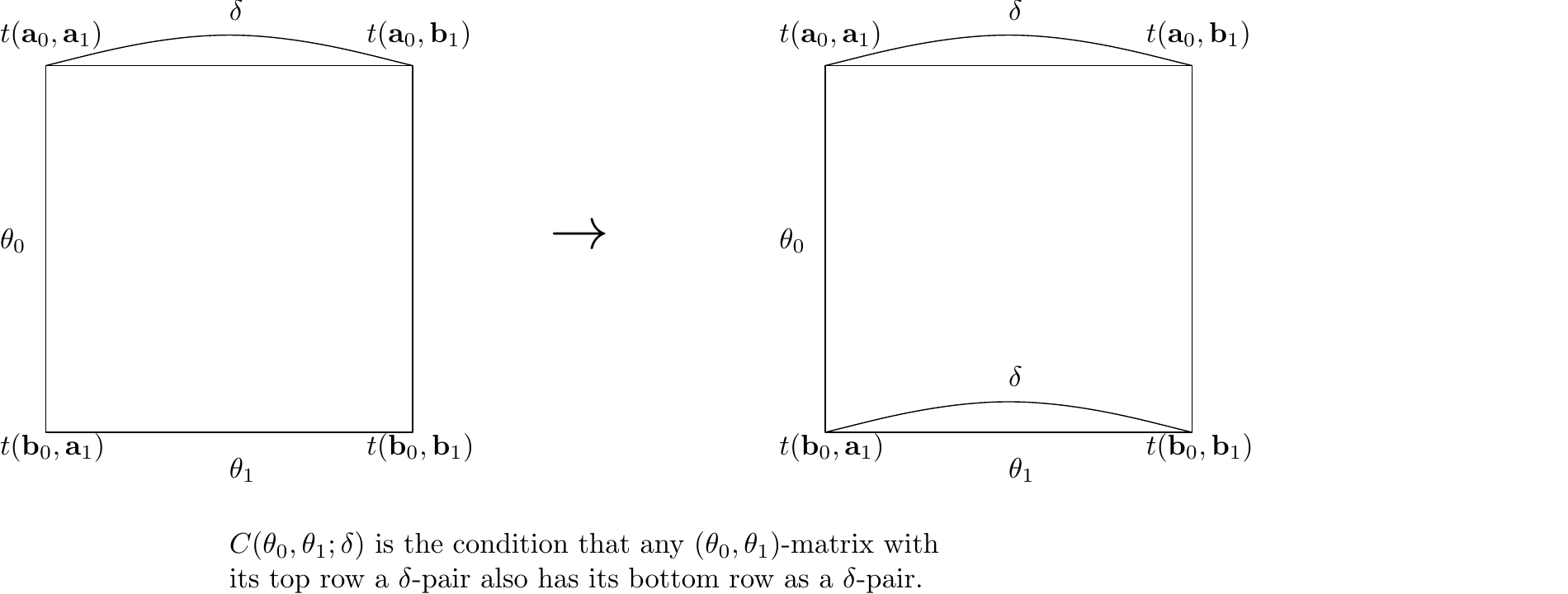}

\vspace{5mm}%

\includegraphics{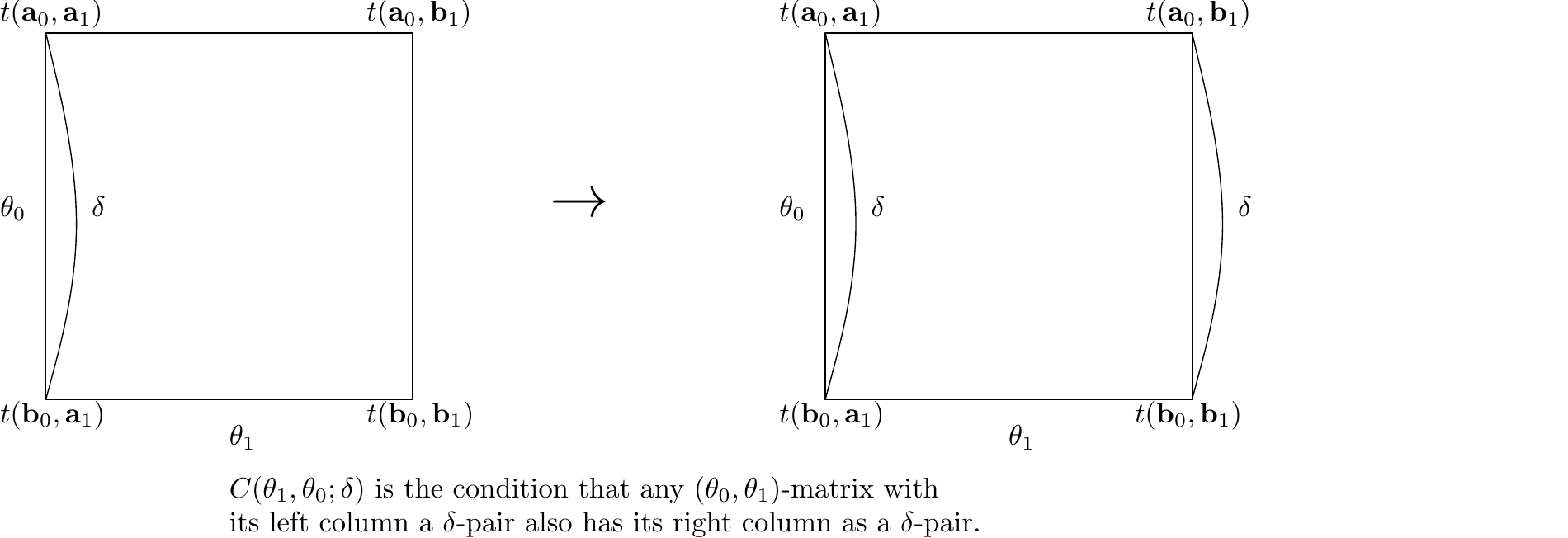}

\caption{Binary centrality}\label{fig:bincen}
\end{figure}

It is easy to generalize the idea of matrices to three dimensions. For congruences $\theta_0, \theta_1, \theta_2, \delta$ of an algebra $\A$, the condition $C(\theta_1, \theta_2, \theta_0; \delta)$ is equivalent to the implication depicted in Figure \ref{fig:cubecen} for all $t\in \Pol(\A)$ and $\textbf{a}_0 \equiv_{\theta_0} \textbf{b}_0, \textbf{a}_1 \equiv_{\theta_1} \textbf{b}_1$, $\textbf{a}_2 \equiv_{\theta_2} \textbf{b}_2$.

\begin{figure}[!ht]

\includegraphics{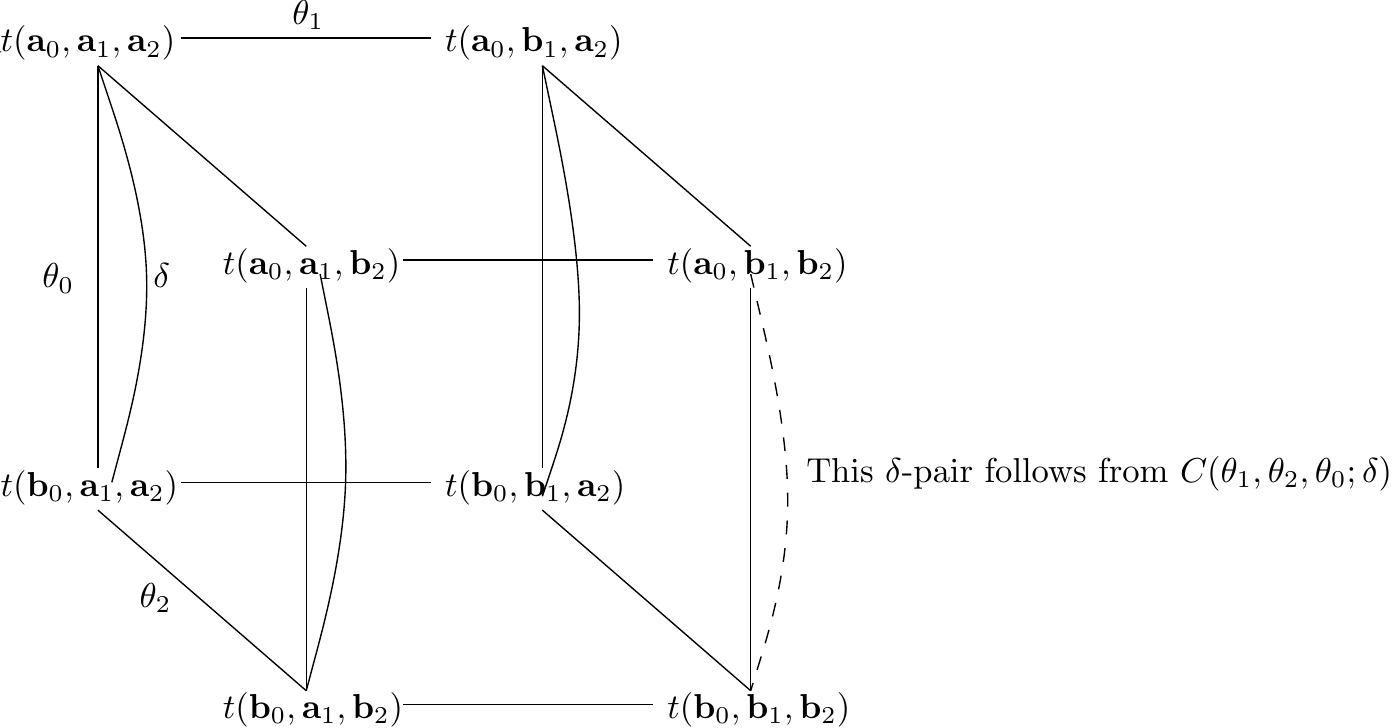}

\caption{Ternary Centrality}\label{fig:cubecen}

\end{figure}

The main arguments in this paper are essentially combinatorial and rely on isolating certain squares and lines in matrices. In the case of the matrix shown in Figure \ref{fig:cubecen}, we identify the squares shown in Figure \ref{fig:pivsquare1}, which we label as $(0,1)$-supporting and pivot squares (see Definition \ref{supportpiv}). Notice that both squares are $(\theta_0, \theta_1)$-matrices, where the supporting square corresponds to the polynomial $t(\textbf{z}_0, \textbf{z}_1, \textbf{a}_2)$ and the pivot square corresponds to the polynomial $t(\textbf{z}_0, \textbf{z}_1, \textbf{b}_2)$.

\begin{figure}[!ht]

\includegraphics{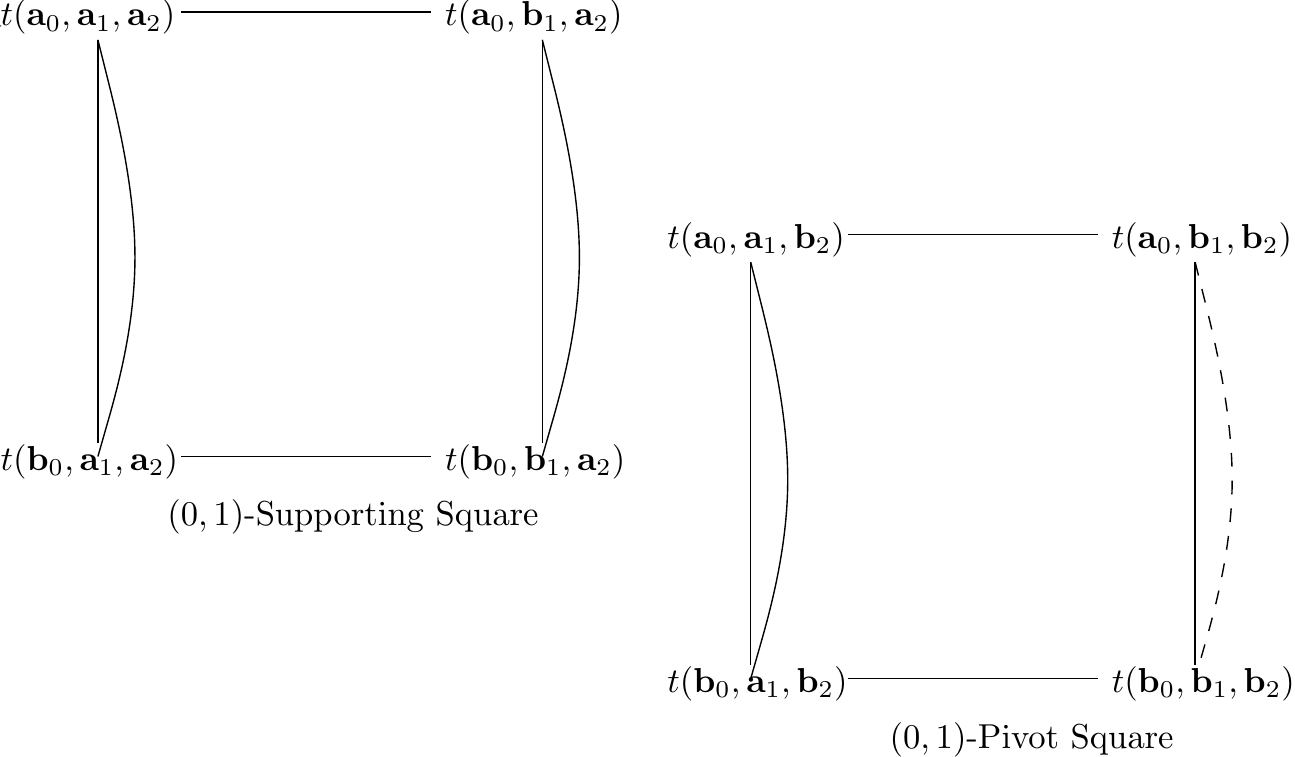}

\vspace{5mm}

\includegraphics{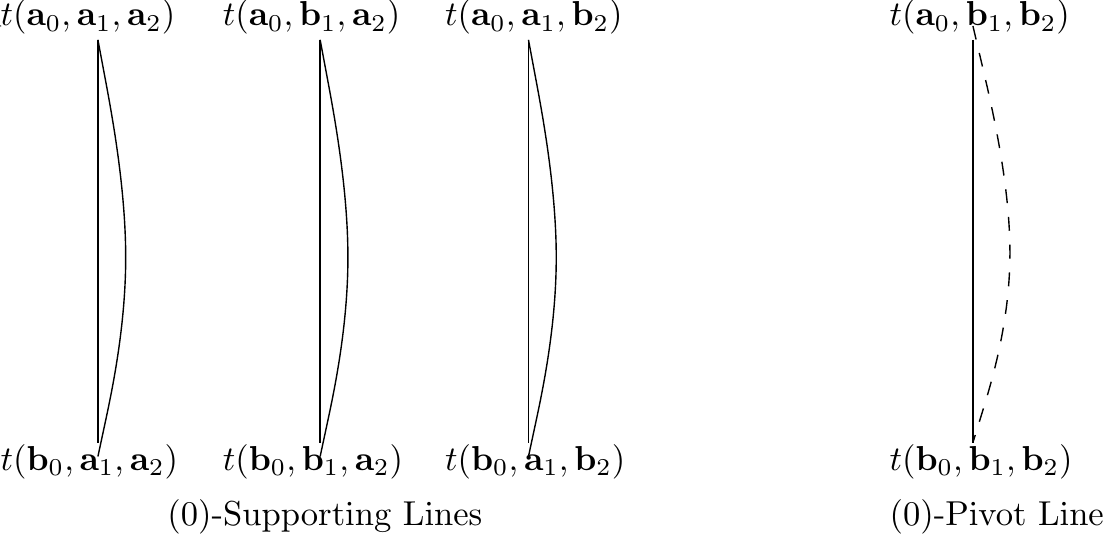}

\caption{Squares and Lines}\label{fig:pivsquare1}

\end{figure}

We also identify the lines shown in Figure \ref{fig:pivsquare1}, which are labeled as either a $(0)$-supporting line or a $(0)$-pivot line (see Definition \ref{supportpiv}). Notice that each line corresponds to a polynomial $s(\textbf{z}_0)= t(\textbf{z}_0, \textbf{x}_1, \textbf{x}_2)$, where $\textbf{x}_1 \in \{\textbf{a}_1, \textbf{b}_1 \}$ and $\textbf{x}_2 \in \{\textbf{a}_2, \textbf{b}_2 \}$. Notice that $C(\theta_1,\theta_2, \theta_0; \delta)$ is equivalent to the statement that if every $(0)$-supporting line of such a matrix is a $\delta$ pair, then the $(0)$-pivot line is a $\delta$-pair.

We therefore require for a sequence of congruences $(\theta_0, \dots, \theta_{k-1})$ the notion of a matrix, as well as a notation to identify a matrice's supporting and pivot squares and lines. 

 \begin{defn}\label{tmatrix}

Let $T = (\theta_0,\dots, \theta_{k-1}) \in \Con(\A)^k$ be a sequence of congruences of $\A$. A pair $\tau = (t, \mathcal{P})$ is called a \textbf{$T$-matrix label} if 
\begin{enumerate}
\item
$t = t(\textbf{z}_0, \dots , \textbf{z}_{k-1}) \in \Pol(\A)$

\item
$\mathcal{P} = (P_0,\dots , P_{k-1})$ is a sequence of pairs $P_i=(\textbf{a}_i, \textbf{b}_i)$ such that $\textbf{a}_i\equiv_{\theta_i} \textbf{b}_i$
\end{enumerate}

\end{defn}

Let $\tau = (t(\bf{z}_0,\dots, \bf{z}_{k-1}),\mathcal{P})$ be a $T$-matrix label. From the above examples, we see that $\tau$ can be used to construct a $k$-dimensional cube whose vertices correspond to evaluating each variable tuple $\textbf{z}_i$ in $t$ at one of the tuples belonging to $P_i$. We also need to identify the squares and lines of this matrix, which are in fact $2$ and $1$-dimensional matrices.  As in the above examples, these objects correspond to the evaluation of some of the $\textbf{z}_i$ at tuples in $\mathcal{P}$. We introduce notation to identify which of the $\textbf{z}_i$ in $t(\textbf{z}_0, \dots, \textbf{z}_{k-1})$ are being evaluated and which variable tuples $\textbf{z}_i$ remain free.

Let $S \subset k$. Denote by $T_S$ the subsequence $(\theta_{i_1},\dots,\theta_{i_s})$ of congruences from $T$ that is indexed by $S$. For a function $f\in 2^{k \setminus S}$ let $\tau_f = (t\vert_f, \mathcal{P}_S)$ be the $T_S$-matrix label such that
\begin{enumerate}
\item
$t\vert_f(\textbf{z}_{i_1},\dots, \textbf{z}_{i_s})= t(\textbf{x}_1,\dots,\textbf{x}_k)$ with
\begin{enumerate}
\item $(\textbf{z}_{i_1}, \dots, \textbf{z}_{i_s})$ is the collection of variable tuples indexed by $S$
\item $\textbf{x}_i = \textbf{z}_i$ if $i\in  S$
\item $\textbf{x}_i = \textbf{a}_i$ if $f(i)=0$
\item $\textbf{x}_i = \textbf{b}_i$ if $f(i)=1$
\end{enumerate}
\item 
$\mathcal{P}_S$ is the subsequence $(P_{i_1}, \dots, P_{i_s})$ of pairs of tuples from $\mathcal{P}$ that is indexed by $S$.
\end{enumerate}

Notice that if $S = \emptyset $ then each $\tau_f$ specifies a way in which to evaluate each tuple $\textbf{z}_i$ at either $\textbf{a}_i$ or $\textbf{b}_i$. As we will see, these are vertices of the matrices which we now define.

\begin{defn}
Choose $k \geq 1$. Let $T= (\theta_0,\dots, \theta_{k-1})$ be a sequence of congruences of $\A$. Let $\tau = (t, \mathcal{P})$ be a $T$-matrix label. A \textbf{$T$-matrix} is an element 
$$m \in \prod_{f\in 2^k} \A = \A^{2^k}$$ 
such that $m_f = t\vert_f$ for all $f\in 2^{k}$. We say in this case that $m$ is \textbf{labeled} by $\tau$. Denote by $M(T)$ the collection of all $T$-matrices.
\end{defn}

\begin{rem}
If $T=(\theta_1,\dots, \theta_k)$ is a sequence of congruences of $\A$ then $M(T)$ and $M(\theta_1,\dots, \theta_k)$ denote the same collection. 
\end{rem}

If we consider the set $k$ as a set of coordinates, the set of functions $2^{k}$ can be viewed as a $k$-dimensional cube, where $f$ is connected to $g$ by an edge if $f(i)=g(i)$ for all $i \in k \setminus \{ j \}$ for some coordinate $j$. Each $T$-matrix $m$ labeled by $\tau$ is therefore a $k$-dimensional cube, with a vertex $m_f$ for each $f\in 2^k$. Moreover, if $m_f$ and $m_g$ are connected by an edge where $f(i)=g(i)$ for all $i\in k \setminus \{j\}$ for some coordinate $j$, then $m_f \equiv_{\theta_j} m_g$. 

As noted in the case of the binary commutator, the collection of $(\alpha, \beta)$-matrices is a subalgebra of $\A^4$ and is generated by those $m\in M(\alpha, \beta)$ that are constant across rows or columns. These facts easily generalize to the collection of $T$-matrices.

\begin{lem}\label{tmatrixlem}
Let $T= (\theta_0,\dots, \theta_{k-1})$ be a sequence of congruences of an algebra $\A$. The collection $M(T)$ forms a subalgebra of $\A^{2^k}$, and is generated by those matrices $m \in M(T)$ that depend only on one coordinate.

\end{lem}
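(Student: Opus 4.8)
The plan is to prove the two assertions in turn: first that $M(T)$ is a subalgebra of $\A^{2^k}$, and then that the matrices depending on only one coordinate generate it. For the subalgebra claim, the key observation is that the defining condition ``$m$ is a $T$-matrix'' is closed under the fundamental operations of $\A^{2^k}$. Concretely, suppose $m^{(1)}, \dots, m^{(r)} \in M(T)$ are labeled by $T$-matrix labels $\tau^{(j)} = (t^{(j)}(\textbf{z}_0^{(j)},\dots,\textbf{z}_{k-1}^{(j)}), \mathcal{P}^{(j)})$, where I take the variable tuples of the different $t^{(j)}$ to be disjoint, and let $g$ be an $r$-ary basic operation of $\A$. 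I would form the polynomial $t(\textbf{z}_0,\dots,\textbf{z}_{k-1}) := g(t^{(1)},\dots,t^{(r)})$ where $\textbf{z}_i$ is the concatenation $\textbf{z}_i^{(1)} \concat \cdots \concat \textbf{z}_i^{(r)}$, and the pair sequence $\mathcal{P}$ with $P_i = (\textbf{a}_i^{(1)}\concat\cdots\concat\textbf{a}_i^{(r)}, \textbf{b}_i^{(1)}\concat\cdots\concat\textbf{b}_i^{(r)})$; since each $\textbf{a}_i^{(j)} \equiv_{\theta_i} \textbf{b}_i^{(j)}$, the concatenations are again $\theta_i$-related, so $\tau = (t,\mathcal{P})$ is a legitimate $T$-matrix label. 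Then for each $f \in 2^k$ one checks directly from the definition of $t\vert_f$ that $(g(m^{(1)},\dots,m^{(r)}))_f = g(m^{(1)}_f,\dots,m^{(r)}_f) = g(t^{(1)}\vert_f,\dots,t^{(r)}\vert_f) = t\vert_f$, so $g(m^{(1)},\dots,m^{(r)})$ is the $T$-matrix labeled by $\tau$. This also handles nullary operations (constants), which give the constant matrix, so $M(T)$ is a subuniverse.

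For the generation claim, let $N$ be the subalgebra of $\A^{2^k}$ generated by the set $G$ of those $T$-matrices that depend on only one coordinate; by the first part $N \leq M(T)$ and I must show the reverse inclusion. Fix an arbitrary $m \in M(T)$ labeled by $\tau = (t(\textbf{z}_0,\dots,\textbf{z}_{k-1}), \mathcal{P})$ with $P_i = (\textbf{a}_i, \textbf{b}_i)$ and $|\textbf{z}_i| = n_i$. For each $i \in k$ and each coordinate position $\ell < n_i$, consider the ``basic'' $T$-matrices $u^{(i,\ell)}$ and $v^{(i,\ell)}$ defined as follows: $u^{(i,\ell)}$ is the $T$-matrix labeled by the projection $\textbf{z}_i \mapsto (\textbf{z}_i)_\ell$ together with the pair $P_i$ in coordinate $i$ (and trivial data elsewhere), so that $u^{(i,\ell)}_f = (\textbf{a}_i)_\ell$ if $f(i) = 0$ and $(\textbf{b}_i)_\ell$ if $f(i) = 1$; this depends only on the coordinate $i$, hence lies in $G$. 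The idea is that $m$ is obtained by applying the polynomial $t$ coordinatewise across $2^k$ to these basic matrices: precisely, $m = t\bigl((u^{(0,\ell)})_{\ell<n_0}, (u^{(1,\ell)})_{\ell<n_1}, \dots, (u^{(k-1,\ell)})_{\ell<n_{k-1}}\bigr)$, the polynomial operation $t$ applied in the algebra $\A^{2^k}$. Evaluating the $f$-th coordinate of the right side gives $t\vert_f$ by unwinding the definition, which matches $m_f$. Since each $u^{(i,\ell)} \in G \subseteq N$ and $N$ is closed under polynomial operations of $\A$ (it is a subuniverse, hence closed under term operations, and polynomial operations differ only by substituting constants, which also lie in $N$ as the constant matrices are themselves one-coordinate matrices), we conclude $m \in N$.

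The main obstacle, or rather the main bookkeeping burden, is the careful management of variable tuples and concatenations in the first part, and in the second part making precise the claim that $t\vert_f$ equals the $f$-th coordinate of $t$ applied coordinatewise to the $u^{(i,\ell)}$: this is essentially the statement that evaluation commutes with the coordinatewise product structure, combined with the fact that $t\vert_f$ is by definition $t$ with $\textbf{z}_i$ replaced by $\textbf{a}_i$ or $\textbf{b}_i$ according to $f(i)$, which is exactly what substituting the $f$-th coordinate of $u^{(i,\ell)}$ does. One subtlety worth flagging explicitly is the use of polynomial (not just term) operations: since a $T$-matrix label is allowed to use an arbitrary $t \in \Pol(\A)$, the generating set $G$ must be shown closed under polynomial operations, which follows because $N$ contains all constant matrices (each constant matrix depends on zero coordinates, vacuously ``on only one coordinate'', or can be realized as a one-coordinate matrix with a constant label) and is a subuniverse. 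With these points addressed the argument is routine, paralleling the binary case recalled after Definition \ref{FMmat}.
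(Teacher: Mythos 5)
Your proposal is correct, and it takes the route the paper intends: the paper gives no proof of Lemma \ref{tmatrixlem}, merely asserting that the binary facts ``easily generalize,'' and your two steps --- closure of $M(T)$ under the coordinatewise basic operations by concatenating labels and pair sequences, and recovering an arbitrary $m$ by applying its label polynomial $t$ coordinatewise to the one-coordinate matrices $u^{(i,\ell)}$, with the constant matrices (themselves one-coordinate matrices) absorbing the constants so that polynomial rather than mere term closure is available --- are exactly that routine generalization of the binary case. I see no gaps.
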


We now define the ideas of a cross-section square and a cross-section line.  Let $T= (\theta_0\dots,\theta_{k-1})$ be a sequence of congruences and $m \in M(T)$ be labeled by $\tau = (t, \mathcal{P})$. Choose two coordinates $j,l \in k$ with $j \neq l$. For $f^*\in  2 ^{k \setminus\{j,l\}}$ let $m_{f^*} \in M(\theta_j, \theta_l)$ be the $(\theta_j, \theta_l)$-matrix labeled by $\tau_{f^*}$. We call $m_{f^*}$ the \textbf{$(j,l)$-cross-section square} of $m$ at $f^*$. Similarly, for a coordinate $j\in k$ and $f\in  2 ^{k\setminus\{j\}}$ let $m_f \in M(\theta_j)$ be the $(\theta_j)$-matrix labeled by $\tau_f$. We call $m_f$ in this case the \textbf{$(j)$-cross-section line} of $m$ at $f$. 

A typical $(j,l)$-cross-section square $m_{f^*}$ will be displayed as 
$$m_{f^*}= \left[  \begin{array}{cc} 
	t_{f^*}(\textbf{a}_j, \textbf{a}_l) & t_{f^*}(\textbf{a}_j, \textbf{b}_l)\\
	t_{f^*}(\textbf{b}_j, \textbf{a}_l)& t_{f^*}(\textbf{b}_j, \textbf{b}_l)\\
				\end{array}
\right] = 
\left[  \begin{array}{cc} 
	r_{f^*} & s_{f^*}\\
	u_{f^*}& v_{f^*}\\
				\end{array}
\right]$$
and a typical $(j)$ or $(l)$-cross-section line of $m$ is a column or row, respectively, of such a square.

We set 
$$ S(m;j,l) = \{m_{f^*}
: f^* \in  2^{k \setminus\{j,l\} }
\}
\textnormal{ and }$$ 
$$ L(m;j) = \{m_f
: f \in  2^{k \setminus\{j\} }
\}
$$ 
to be the collections of all $(j,l)$-cross-section squares and $(j)$-cross-section lines of $m$, respectively.

\begin{defn}\label{supportpiv}
Let $T = (\theta_0, \dots, \theta_{k-1}) \in \Con(\A)^k$, and take $m\in M(T)$. Choose $j,l \in k$ such that $j\neq l$. Let $\textbf{jl} \in 2^{k \setminus \{j,l\}}$, $\textbf{j} \in 2^{k \setminus \{j\}}$ and $\textbf{1} \in 2^k$ be the constant functions that take value $1$ on their respective domains. We call the $(j,l)$-cross-section square of $m$ at $\textbf{jl}$ the \textbf{$(j,l)$-pivot square}. All other $(j,l)$ cross-section squares of $m$ will be called \textbf{$(j,l)$-supporting squares}. Similarly, we call the $(j)$ cross-section square of $m$ at $\textbf{j}$ the \textbf{$(j)$-pivot line}, and all other $(j)$ cross-section lines will be called \textbf{$(j)$-supporting lines}.
\end{defn}

We now reformulate Definition \ref{def:bulcen} with respect to these definitions. 

\begin{defn}\label{def:cen}
We say that \textbf{$T$ is centralized at $j$ modulo $\delta$} if the following property holds for all $T$-matrices $m \in M(T)$:
\begin{enumerate}
\item[(*)] If every $(j)$-supporting line of $m$ is a $\delta$-pair, then the $(j)$-pivot line of $m$ is a $\delta$-pair. 

\end{enumerate}

We abbreviate this property $C(T; j; \delta)$.
\end{defn}

\begin{defn}\label{def:commutator}
We define $[T]_j = \bigwedge \{\delta : C(T; j; \delta) \}$
\end{defn}

\begin{rem}\label{diffcomm}
Notice that $[T]_j = [\theta_{i_0}, \dots, \theta_{i_{k-2}}, \theta_j]$ for any permutation of the $k-1$ congruences that are not $\theta_j$, where the left side is given by Definition \ref{def:commutator} and the right is given by Definition \ref{bulcomm}. 
\end{rem}

We conclude this chapter with a general picture of the $(j,l)$-supporting and pivot squares of a $T$-matrix $m$ labeled by some $\tau = (t, \mathcal{P})$, a $T$-matrix label for a sequence of congruences $T= (\theta_0, \dots, \theta_{k-1})$. The conditions $C(T;j;\delta)$ and $C(T;l;\delta)$ are shown in Figure \ref{fig:supppivsquares}, respectively.

\begin{figure}

\includegraphics{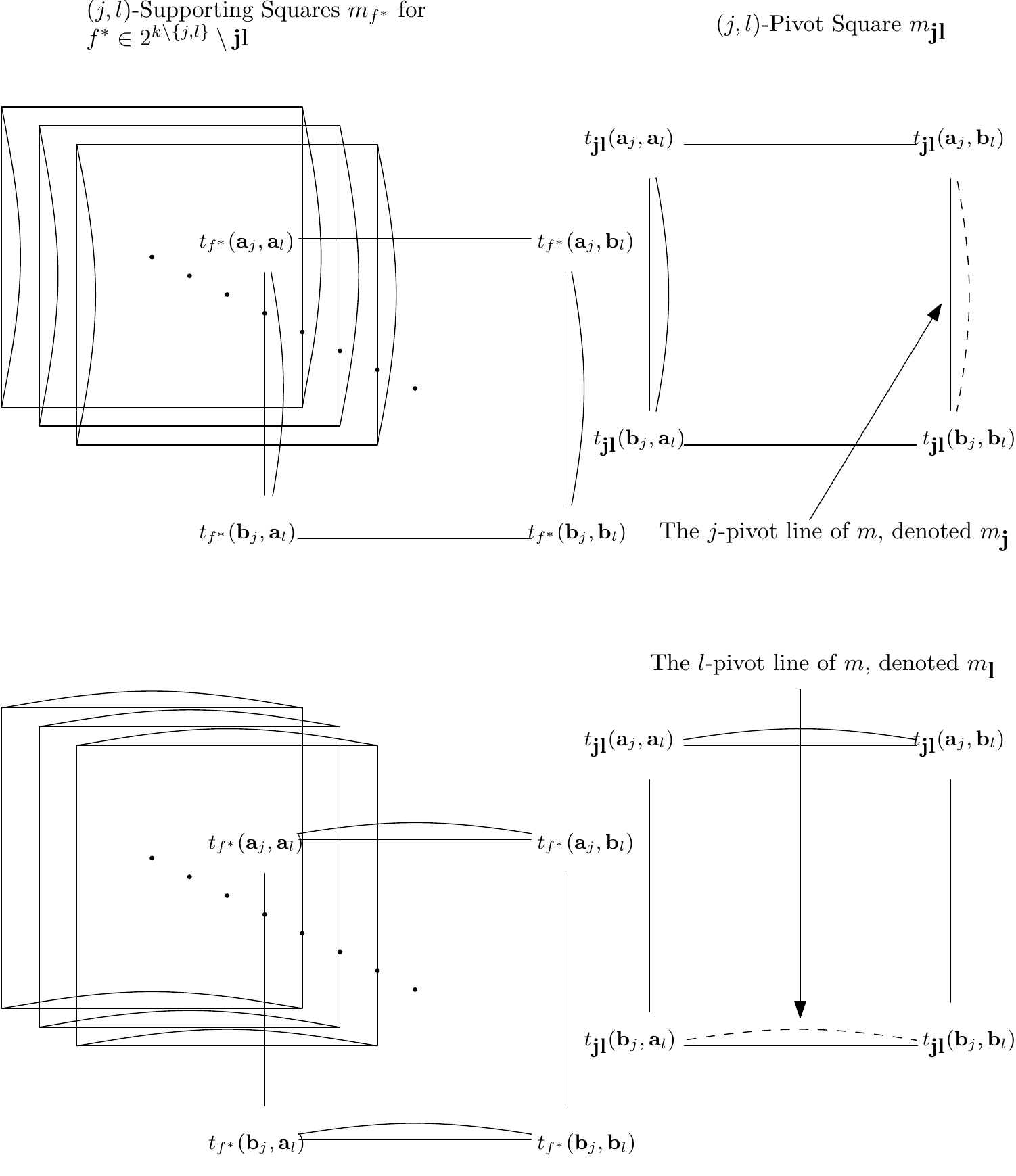}

\caption{Higher Centrality, Squares and Lines}\label{fig:supppivsquares}

\end{figure}

\section{Symmetry of Higher Commutator}

For the remainder of this document a variety $\var$ is assumed to be congruence modular. In this section we will show that the commutator of Definition \ref{bulcomm} is symmetric. We fix $\A \in \var$, with $\var$ a congruence modular variety with Day terms $m_e$ for $e \in n+1$. For $k \geq 2$ let $T = (\theta_0, \dots, \theta_{k-1}) \in \Con(\A)^k$ be a sequence of congruences of $\A$. We wish to show that $[\theta_0, \dots, \theta_{k-1}] = [ \theta_{\sigma(0)}, \dots, \theta_{\sigma(k-1)}]$ for any permutation $\sigma$ of the elements of $k$. By Remark \ref{diffcomm} it will suffice to show that $[T]_j = [T]_l$ for all $j,l \in k$. This will imply that $[\theta_0, \dots, \theta_{k-1}] = [ \theta_{\sigma(0)}, \dots, \theta_{\sigma(k-1)}] = [T]_j = [T]_l$ for all permutations $\sigma$ of $k$ and all $j, l \in k$. 

We begin with the following

\begin{lem}\label{lem:newop}
Let $\var$ be a congruence modular variety with Day terms $m_e$ for $e \in n+1$, and let $\A \in \var$. Let $T=(\theta_0,\dots \theta_{k-1}) \in \Con(\A)^k$. For each choice of $j,l\in k$ such that $j \neq l$ and $e \in n+1$ there is a map $R^e_{j,l}:M(T) \rightarrow M(T)$ with the following properties: 

\begin{enumerate}
\item
If $h  \in M(T)$ has the set of $(j,l)$-cross-section squares $$ S(h;j,l) = \left\{ h_{f^*} = \left[  \begin{array}{cc} 
	r_{f^*}  & s_{f^*}\\
	u_{f^*} & v_{f^*}\\
				\end{array}
\right]
: f^* \in  2 ^{k \setminus\{j,l\} }\right\}
$$
then $R^e_{j,l}(h)$ has the set of $(j,l)$-cross-section squares $S(R^e_{j,l}(h);j,l) = $
$$ \left\{R^e_{j,l}(m)_{f^*}=  \left[  \begin{array}{cc} 
	s_{f^*}  & s_{f^*}\\
	m_e(s_{f^*}, r_{f^*}, u_{f^*}, v_{f^*}) & m_e(s_{f^*}, s_{f^*}, v_{f^*}, v_{f^*})\\
				\end{array}
\right]
: f^* \in  2 ^{k \setminus\{j,l\} }
\right\}
$$

\item
If every $(j)$-supporting line of $h$ is a $\delta$-pair, then every $(l)$-supporting line of $R^e_{j,l}(h)$ is a $\delta$-pair. 

\item 
Suppose the $(j)$-supporting line belonging to the $(j,l)$-pivot square of $h$ is a $\delta$-pair. The $(j)$-pivot line of $h$ is a $\delta$-pair if and only if the $(l)$-pivot line of $R^e_{j,l}(h)$ is a $\delta$-pair for all $e\in n+1$.

\end{enumerate}

The map $R^e_{j,l}$ will be called \textbf{the $\bf{e}$th shift rotation at $(j,l)$}. 
\end{lem}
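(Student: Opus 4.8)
The plan is to define $R^e_{j,l}$ directly on matrix labels and then verify the three properties by essentially elementary manipulations with Day terms. Given a $T$-matrix $m$ labeled by $\tau = (t(\textbf{z}_0,\dots,\textbf{z}_{k-1}),\mathcal{P})$ with $P_i = (\textbf{a}_i,\textbf{b}_i)$, I would build a new polynomial in the variable tuples $\textbf{z}_j,\textbf{z}_l$ (and the same remaining $\textbf{z}_i$) using a Day term, by setting
$$ t'(\dots,\textbf{z}_j,\dots,\textbf{z}_l,\dots) = m_e\bigl(t(\dots,\textbf{a}_j,\dots,\textbf{z}_l,\dots),\, t(\dots,\textbf{a}_j,\dots,\textbf{z}_j',\dots),\, t(\dots,\textbf{z}_j,\dots,\textbf{z}_j',\dots),\, t(\dots,\textbf{z}_j,\dots,\textbf{z}_l,\dots)\bigr), $$
but in practice it is cleaner to specify $R^e_{j,l}(m)$ coordinatewise: for each $f^* \in 2^{k\setminus\{j,l\}}$ the $(j,l)$-cross-section square of $R^e_{j,l}(m)$ at $f^*$ is declared to be the $2\times 2$ array in property (1), whose entries are built by applying $m_e$ entrywise to the four entries $r_{f^*},s_{f^*},u_{f^*},v_{f^*}$ of the corresponding cross-section square of $m$ together with repeats. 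The first thing to check is that this prescription is \emph{coherent}, i.e. that it really comes from a single $T$-matrix label: since $M(T)$ is a subalgebra of $\A^{2^k}$ (Lemma \ref{tmatrixlem}) and since $m_e$ is a term, $R^e_{j,l}(m)$ is obtained by applying the term operation $m_e$ in $M(T)$ to four $T$-matrices derived from $m$ (namely the matrices whose cross-section squares are, entrywise, the ``column $l$=$b$'', the ``pivot'' pattern, and so on); concretely, the four arguments are $T$-matrices labeled by $\tau_{g}$-type relabelings obtained by substituting $\textbf{a}_j$ or collapsing the $l$-direction, so their existence is immediate. Hence $R^e_{j,l}(h) \in M(T)$.

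For property (1), once coherence is established I would just read off that applying $m_e$ entrywise to
$\left[\begin{smallmatrix} r & s \\ u & v\end{smallmatrix}\right]$ against the three auxiliary patterns yields $\left[\begin{smallmatrix} m_e(s,r,r,s) & m_e(s,s,s,s) \\ m_e(s,r,u,v) & m_e(s,s,v,v)\end{smallmatrix}\right]$, and then use the Day identity $m_e(x,y,y,x)\approx x$ to simplify the top-left entry to $s$ and (trivially) the top-right to $s$. This gives exactly the displayed square, so property (1) is a direct consequence of the definition plus identity (1) of Proposition \ref{prop:day}.

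For property (2): a $(l)$-supporting line of $R^e_{j,l}(h)$ is one of the two columns of some cross-section square $R^e_{j,l}(h)_{f^*}$ where either $f^* \neq \textbf{jl}$, or $f^* = \textbf{jl}$ but we take the non-pivot column. Looking at the square in (1), the left column is $(s_{f^*},m_e(s_{f^*},r_{f^*},u_{f^*},v_{f^*}))$ and the right column is $(s_{f^*},m_e(s_{f^*},s_{f^*},v_{f^*},v_{f^*}))$. I would observe that $(s_{f^*},r_{f^*})$ and $(u_{f^*},v_{f^*})$ are exactly the two $(j)$-cross-section lines (columns) of the square $h_{f^*}$ of $h$, read from the ``$l=b$'' side versus the ``$l=a$'' side respectively — and that whenever the corresponding $(j)$-lines of $h$ are $\delta$-pairs, applying $m_e$ shows $m_e(s_{f^*},r_{f^*},u_{f^*},v_{f^*}) \mathrel{\delta} m_e(s_{f^*},s_{f^*},v_{f^*},v_{f^*})$ (substitute the first argument into itself, second $\to$ third coordinatewise: more precisely $s\mathrel\delta s$, $r\mathrel\delta s$, $u\mathrel\delta v$, $v\mathrel\delta v$ are precisely the hypotheses, using that $(s,r)$ and $(u,v)$ are $(j)$-supporting lines of $h$ when $f^*\neq\textbf{jl}$, together with $s\mathrel\delta s$ and $v\mathrel\delta v$ reflexively), so both columns of $R^e_{j,l}(h)_{f^*}$ are $\delta$-pairs. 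The one subtlety is the case $f^* = \textbf{jl}$: there the only $(l)$-supporting line we must control is the non-pivot column $(s_{\textbf{jl}},m_e(s_{\textbf{jl}},s_{\textbf{jl}},v_{\textbf{jl}},v_{\textbf{jl}}))$, and $m_e(s,s,v,v)\mathrel\delta m_e(s,s,s,s)=s$ does \emph{not} follow from the supporting hypotheses alone — but it also is not needed, because the correct reading is that this column equals $(s,m_e(s,s,v,v))$ and we need it to be a $\delta$-pair; this instead requires the extra hypothesis about the $(j)$-supporting line of the $(j,l)$-pivot square, which is why that hypothesis is quarantined into property (3) rather than (2). So I must be careful about exactly which lines count as ``supporting'' for $R^e_{j,l}(h)$ at $f^* = \textbf{jl}$; I expect re-deriving the precise bookkeeping here — matching ``column of $h_{f^*}$'' to ``line of $h$'', and tracking which of the $2^{k-1}$ many lines is the pivot — to be the main obstacle, and the place where an off-by-one in the role of $\textbf{jl}$ would break the argument.

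For property (3), I would use Proposition \ref{prop:shift} (the characterization of $\delta$ via the Day terms $m_e$). The $(j)$-pivot line of $h$ is the pair $\langle u_{\textbf{jl}}, v_{\textbf{jl}}\rangle$ — wait, more precisely it is the column of the $(j,l)$-pivot square $h_{\textbf{jl}}$ on the pivot ($l=b$) side, namely $\langle s_{\textbf{jl}}, v_{\textbf{jl}}\rangle$ in the notation $\left[\begin{smallmatrix} r & s\\ u & v\end{smallmatrix}\right]$ with rows indexed by $j$ and columns by $l$; by the hypothesis of (3), the other column $\langle r_{\textbf{jl}}, u_{\textbf{jl}}\rangle$ (the $(j)$-supporting line of the $(j,l)$-pivot square) is a $\delta$-pair. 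The $(l)$-pivot line of $R^e_{j,l}(h)$ is the bottom row of $R^e_{j,l}(h)_{\textbf{jl}}$, namely $\langle m_e(s_{\textbf{jl}},r_{\textbf{jl}},u_{\textbf{jl}},v_{\textbf{jl}}),\, m_e(s_{\textbf{jl}},s_{\textbf{jl}},v_{\textbf{jl}},v_{\textbf{jl}})\rangle$. Applying Proposition \ref{prop:shift} with the $\delta$-pair $\langle r_{\textbf{jl}},u_{\textbf{jl}}\rangle$ in the role of $\langle b,d\rangle$ and $\langle s_{\textbf{jl}},v_{\textbf{jl}}\rangle$ in the role of $\langle a,c\rangle$: condition (2) of that proposition says $\langle s_{\textbf{jl}},v_{\textbf{jl}}\rangle \in \delta$ iff $\langle m_e(s,s,v,v), m_e(s,r,u,v)\rangle \in \delta$ for all $e \in n+1$, which is exactly the statement that the $(l)$-pivot line of $R^e_{j,l}(h)$ is a $\delta$-pair for all $e$. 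Thus (3) is precisely Proposition \ref{prop:shift} applied to the pivot square, modulo matching names; once the coordinate bookkeeping from (2) is pinned down, this final step is essentially immediate.
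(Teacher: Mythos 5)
Your overall route is the same as the paper's: define $R^e_{j,l}(h)$ as an $m_e$-combination of matrices obtained from $h$ by specializing the $j$- and $l$-arguments, conclude $R^e_{j,l}(h)\in M(T)$, read off (1) from the Day identity $m_e(x,y,y,x)\approx x$, and prove (3) by applying Proposition \ref{prop:shift} to the $(j,l)$-pivot square; your treatment of (3) is exactly the paper's and is correct. Your coherence argument (closure of $M(T)$ under the term $m_e$, via Lemma \ref{tmatrixlem}, applied to the four matrices obtained from $h$ by setting $(\textbf{z}_j,\textbf{z}_l):=(\textbf{a}_j,\textbf{b}_l)$, $\textbf{z}_j:=\textbf{a}_j$, nothing, and $\textbf{z}_l:=\textbf{b}_l$) is a legitimate substitute for the paper's explicit relabeling with concatenated tuples, provided those four matrices are actually exhibited; the displayed formula you wrote for $t'$ does not do this, and the paper's explicit label $\tau^e_{j,l}$ is what makes the lemma's ``labeled by'' bookkeeping available later.

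The genuine gap is in your verification of (2), and it comes from a row/column mix-up. By the paper's convention the $(j)$-lines of $h$ are the \emph{columns} $\langle r_{f^*},u_{f^*}\rangle$ and $\langle s_{f^*},v_{f^*}\rangle$ of the cross-section squares, and the $(l)$-lines of $R^e_{j,l}(h)$ are its \emph{rows}; you instead take $\langle s_{f^*},r_{f^*}\rangle$ and $\langle u_{f^*},v_{f^*}\rangle$ (rows of $h_{f^*}$, i.e.\ $(l)$-lines, which are merely $\theta_l$-pairs) as your hypotheses, and you call the columns of $R^e_{j,l}(h)_{f^*}$ the lines to be checked. Consequently your coordinatewise derivation of $m_e(s,r,u,v)\equiv_\delta m_e(s,s,v,v)$ rests on $r\equiv_\delta s$ and $u\equiv_\delta v$, which are not given by the hypothesis of (2) and in general fail; what is given at $f^*\neq\textbf{jl}$ is $r\equiv_\delta u$ and $s\equiv_\delta v$, and with these a plain substitution into $m_e$ does not line up the coordinates. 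The correct step is the one you already use in (3): apply Proposition \ref{prop:shift} with $\langle a,c\rangle=\langle s_{f^*},v_{f^*}\rangle$ and $\langle b,d\rangle=\langle r_{f^*},u_{f^*}\rangle$ (or observe $m_e(s,r,u,v)\equiv_\delta m_e(v,u,u,v)=v$ and $m_e(s,s,v,v)\equiv_\delta m_e(v,v,v,v)=v$) to see that the bottom row of each non-pivot square of $R^e_{j,l}(h)$ is a $\delta$-pair; the top rows are the constant pairs $\langle s_{f^*},s_{f^*}\rangle$, and at $f^*=\textbf{jl}$ the only $(l)$-supporting line is that constant top row, so the extra hypothesis you feared might be needed there is not an issue once rows and columns are assigned correctly. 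With that correction (2) becomes exactly the paper's argument.
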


\begin{proof}

Let $h\in M(T)$ be labeled by $\tau = (t, \mathcal{P})$, where $t = t(\textbf{z}_0, \dots, \textbf{z}_{k-1} )$ and $\mathcal{P} = (P_0, \dots, P_{k-1})$ with $P_i = (\textbf{a}_i, \textbf{b}_i)$. Fix $j,l \in k$ with $j \neq l$ and take $e \in n+1$. Let 
$$t^e_{j,l}(\textbf{y}_0, ..., \textbf{y}_{k-1} ) =m_e(t_0, t_1, t_2, t_3) $$
where
\begin{align*}
t_0 =& t(\textbf{y}_0,\dots , \textbf{y}_j^0,\dots , \textbf{y}_l^0,\dots ,\textbf{y}_{k-1})\\
t_1 =& t(\textbf{y}_0,\dots , \textbf{y}_j^1,\dots , \textbf{y}_l^1,\dots ,\textbf{y}_{k-1})\\
t_2 =& t(\textbf{y}_0,\dots , \textbf{y}_j^2,\dots , \textbf{y}_l^2,\dots ,\textbf{y}_{k-1})\\
t_3 =& t(\textbf{y}_0,\dots , \textbf{y}_j^3,\dots , \textbf{y}_l^3,\dots ,\textbf{y}_{k-1})\
\end{align*}
and $\textbf{y}_j = \textbf{y}_j^0 \concat \textbf{y}_j^1 \concat \textbf{y}_j^2 \concat \textbf{y}_j^3 $,
$\textbf{y}_l = \textbf{y}_l^0 \concat \textbf{y}_l^1 \concat \textbf{y}_l^2 \concat \textbf{y}_l^3 $
are concatenations. 

For each $i \in k$ , define a pair of tuples $P_i^e = (\textbf{a}_i', \textbf{b}_i' ) $ as follows:
\begin{enumerate}
\item
$P_i^e= P_i $ if $i \neq j, l$

\item 
$P_j^e =  (\textbf{a}_j', \textbf{b}_j' ) = (  (\textbf{a}_j \concat \textbf{b}_j \concat \textbf{b}_j \concat \textbf{a}_j) , (\textbf{a}_j \concat \textbf{a}_j \concat \textbf{b}_j \concat \textbf{b}_j ) ) $

\item 
$P_l^e =  (\textbf{a}_l', \textbf{b}_l') = ( (\textbf{b}_l \concat \textbf{a}_l \concat \textbf{a}_l \concat \textbf{b}_l) ,  (\textbf{b}_l \concat \textbf{b}_l \concat \textbf{b}_l \concat \textbf{b}_l )) $

\end{enumerate}
 
 Let $\mathcal{P}^e_{j,l}= (P^e_0,\dots, P^e_{k-1})$, and set $\tau^e_{j,l} = (t^e_{j,l},\mathcal{P}_{j,l}^e)$. Define $R^e_{j,l}(h) \in M(T)$ to be the $T$-matrix labeled by $\tau^e_{j,l}$.

We now compute $S(R^e_{j,l}(h);j,l)$, the set of $(j,l)$ cross-section squares of $R^e_{j,l}(h)$.  Take $f^* \in  2 ^{k \setminus\{j,l\} }$. Consider the $(j,l)$ cross-section square of $h$ at $f^*$: 
$$
h_{f^*}= \left[  \begin{array}{cc} 
	r_{f^*}  & s_{f^*}\\
	u_{f^*} & v_{f^*}\\
				\end{array}
\right]
$$
By the definitions given above we therefore compute 

\begin{align*}
R^e_{j,l}(h)_{f^*}=& \left[  \begin{array}{cc} 
	(t^e_{j,l})_{f^*}(\textbf{a}_j', \textbf{a}_l'  ) & (t^e_{j,l})_{f^*}(\textbf{a}_j', \textbf{b}_l'  ) \\
					(t^e_{j,l})_{f^*}(\textbf{b}_j', \textbf{a}_l'  )  & (t^e_{j,l})_{f^*}(\textbf{b}_j', \textbf{b}_l'  ) \\
				\end{array}
\right]\\
=&
\left[  \begin{array}{cc} 
	m_e(s_{f^*}, u_{f^*}, u_{f^*}, s_{f^*}) & m_e(s_{f^*}, v_{f^*}, v_{f^*}, s_{f^*})\\
					m_e(s_{f^*}, r_{f^*}, u_{f^*}, v_{f^*}) & m_e(s_{f^*}, s_{f^*}, v_{f^*}, v_{f^*})\\
				\end{array}
\right]\\
=&
\left[  \begin{array}{cc} 
	s_{f^*} & s_{f^*}\\
					m_e(s_{f^*}, r_{f^*}, u_{f^*}, v_{f^*}) & m_e(s_{f^*}, s_{f^*}, v_{f^*}, v_{f^*})\\
				\end{array}
\right]
\end{align*}
where the final equality follows from identity (1) in Proposition \ref{prop:day}. This proves (1) of the lemma. 

\indent We now prove (2) and (3). A picture is given in Figure  \ref{fig:shiftrot}, where a typical $(j,l)$-supporting square and the $(j,l)$-pivot square are shown for both $h$ and $R^e_{j,l}(h)$. Supporting lines are drawn in bold.

\begin{figure}[!ht]

\includegraphics{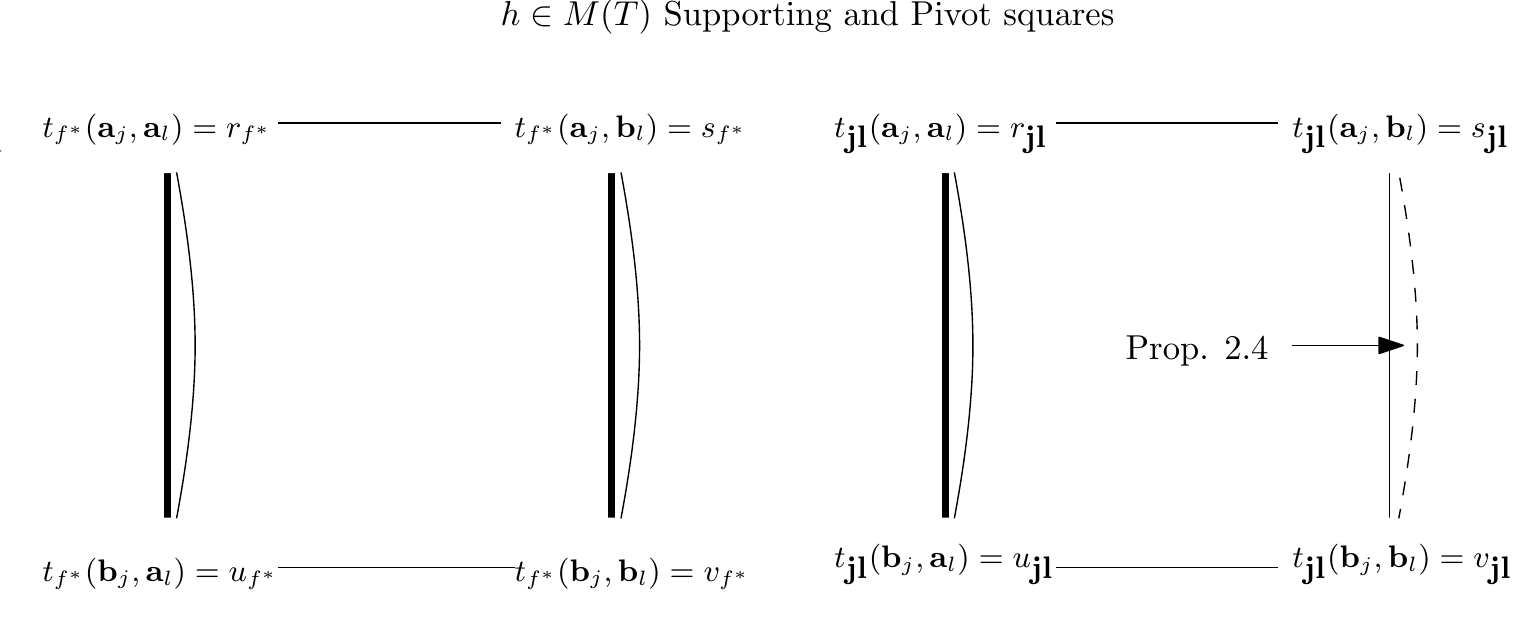}

\vspace{5mm}

\includegraphics{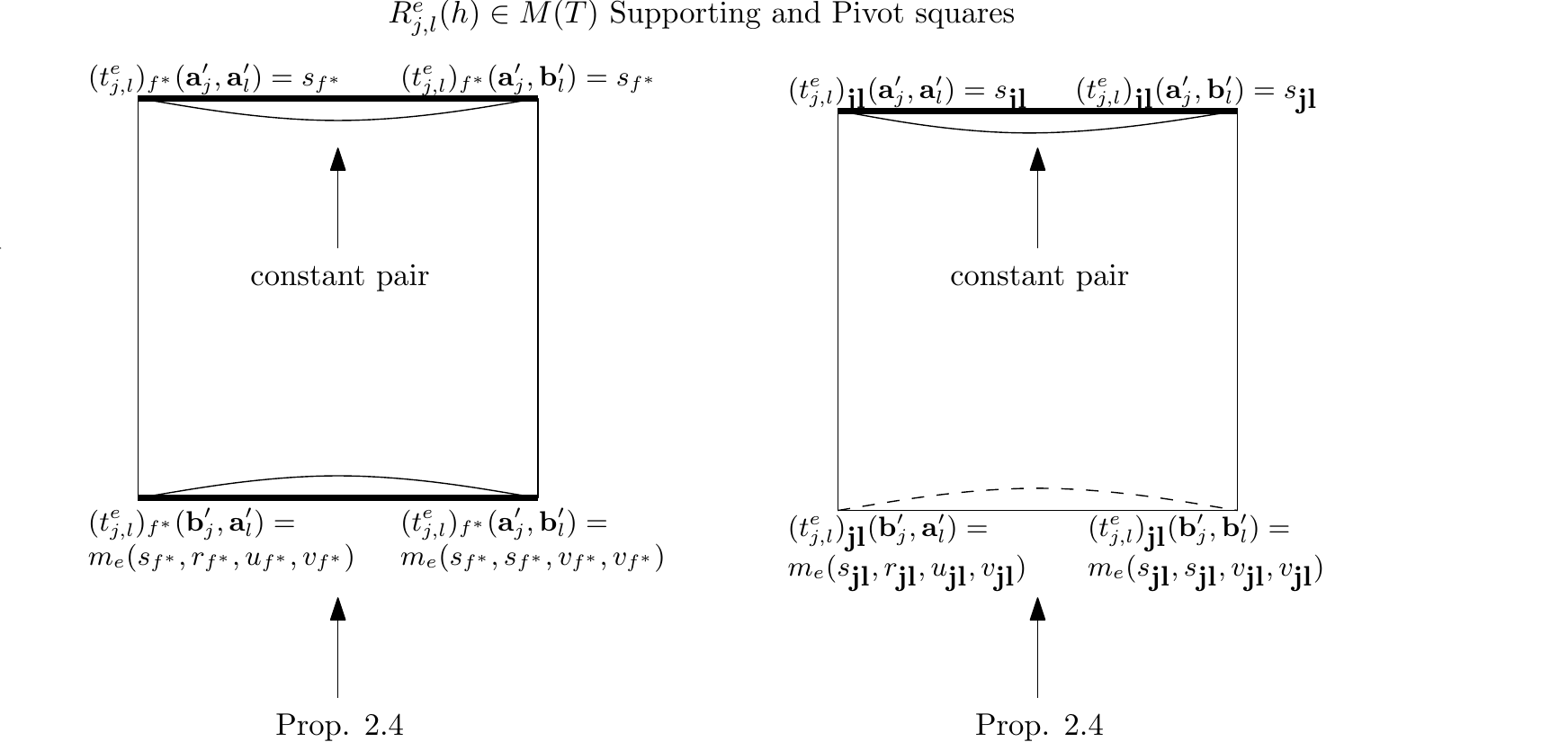}

\caption{Shift Rotations}\label{fig:shiftrot}

\end{figure}

Indeed, any constant pair $\langle s,s \rangle$  is a $\delta$-pair, so the top row of any $(j,l)$-cross-section square of $R^e_{j,l}(h)$ is a $\delta$-pair. That the other $(l)$-supporting lines of $R^e_{j,l}(h)$ are $\delta$-pairs follows from Proposition \ref{prop:shift}. Finally, Proposition $\ref{prop:shift}$ shows that the $(j)$-pivot line of $h$ is a $\delta$-pair if and only if for every $e\in n+1$ the $(l)$-pivot line of $R^e_{j,l}(h)$ is a $\delta$-pair, which is indicated in the picture with dashed curved lines. This proves (3).


\end{proof}

\begin{prop}\label{prop:centraliff}
Let $T=(\theta_0, \dots, \theta_{k-1}) \in \Con(\A)^k$. Suppose for $\delta \in \Con(\A)$ that $\mathcal{C}(T;l;\delta)$ holds for some $l \in k$. Then $\mathcal{C}(T; i;\delta)$ holds for all $i \in k$.
\end{prop}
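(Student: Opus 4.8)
The plan is to reduce the symmetry statement $\mathcal{C}(T;l;\delta) \Rightarrow \mathcal{C}(T;i;\delta)$ to the case where $i$ and $l$ differ in a single coordinate, and then to exploit the shift rotation $R^e_{j,l}$ constructed in Lemma~\ref{lem:newop}. More precisely, it suffices to prove that $\mathcal{C}(T;l;\delta)$ implies $\mathcal{C}(T;j;\delta)$ for an arbitrary fixed pair $j \neq l$ in $k$: since $k$ has at least two elements and any $i$ can be linked to $l$, iterating this single-swap implication yields $\mathcal{C}(T;i;\delta)$ for all $i$. (When $|k|=1$ there is nothing to prove, and the hypothesis already is the conclusion.) So I fix $j \neq l$ and assume $\mathcal{C}(T;l;\delta)$.

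Next I would take an arbitrary $T$-matrix $h \in M(T)$ all of whose $(j)$-supporting lines are $\delta$-pairs, and I must show the $(j)$-pivot line of $h$ is a $\delta$-pair. The idea is to apply the maps $R^e_{j,l}$ for $e \in n+1$ and use property (2) of Lemma~\ref{lem:newop}: since every $(j)$-supporting line of $h$ is a $\delta$-pair, every $(l)$-supporting line of $R^e_{j,l}(h)$ is a $\delta$-pair, for each $e$. By the assumed $\mathcal{C}(T;l;\delta)$ applied to the $T$-matrix $R^e_{j,l}(h)$, the $(l)$-pivot line of $R^e_{j,l}(h)$ is a $\delta$-pair, for every $e \in n+1$. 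Now I invoke property (3) of Lemma~\ref{lem:newop}: provided the $(j)$-supporting line belonging to the $(j,l)$-pivot square of $h$ is a $\delta$-pair, the $(j)$-pivot line of $h$ is a $\delta$-pair if and only if the $(l)$-pivot line of $R^e_{j,l}(h)$ is a $\delta$-pair for all $e$. The hypothesis of property (3) is satisfied because the $(j)$-supporting line belonging to the $(j,l)$-pivot square of $h$ is itself a $(j)$-supporting line of $h$ (it is the $(j)$-cross-section line at a function which is not identically $1$ on $k \setminus \{j\}$, since it takes value different from the pivot on coordinate $l$), hence a $\delta$-pair by assumption on $h$. Therefore the $(j)$-pivot line of $h$ is a $\delta$-pair, which is exactly what $\mathcal{C}(T;j;\delta)$ demands.

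I expect the main obstacle — really the only place requiring care — to be the bookkeeping that identifies which cross-section lines of $h$ are "supporting" versus "pivot" and checking that the $(j)$-line inside the $(j,l)$-pivot square of $h$ is genuinely a $(j)$-supporting line, so that property (3)'s hypothesis is automatically met. This is a matter of unwinding Definition~\ref{supportpiv}: the $(j,l)$-pivot square sits at the constant-$1$ function on $k \setminus \{j,l\}$, and within it the two $(j)$-cross-section lines correspond to the two choices on coordinate $l$; the one with $l$-coordinate $0$ is a $(j)$-supporting line of $h$, while the one with $l$-coordinate $1$ is the $(j)$-pivot line of $h$. It is the former that feeds the hypothesis of (3) and the latter that is the conclusion, so everything lines up. The rest is a direct chaining of the three properties of $R^e_{j,l}$ together with the definitions of $\mathcal{C}(T;\cdot;\delta)$.

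\begin{proof}[Proof sketch to be expanded]
If $k$ has one element the statement is vacuous, so assume $k \geq 2$. It suffices to show that for any $j \neq l$ in $k$, the condition $\mathcal{C}(T;l;\delta)$ implies $\mathcal{C}(T;j;\delta)$; the general statement then follows by iterating. Fix such $j,l$ and assume $\mathcal{C}(T;l;\delta)$. Let $h \in M(T)$ be a $T$-matrix all of whose $(j)$-supporting lines are $\delta$-pairs. For each $e \in n+1$, property (2) of Lemma~\ref{lem:newop} gives that every $(l)$-supporting line of $R^e_{j,l}(h)$ is a $\delta$-pair, so by $\mathcal{C}(T;l;\delta)$ the $(l)$-pivot line of $R^e_{j,l}(h)$ is a $\delta$-pair. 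Since the $(j)$-supporting line belonging to the $(j,l)$-pivot square of $h$ is itself a $(j)$-supporting line of $h$, it is a $\delta$-pair; hence property (3) of Lemma~\ref{lem:newop} applies and yields that the $(j)$-pivot line of $h$ is a $\delta$-pair. As $h$ was arbitrary, $\mathcal{C}(T;j;\delta)$ holds.
\end{proof}
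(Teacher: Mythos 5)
Your proof is correct and follows essentially the same route as the paper: apply the shift rotations $R^e_{j,l}$ for all $e \in n+1$, use property (2) of Lemma~\ref{lem:newop} together with $\mathcal{C}(T;l;\delta)$ to get the $(l)$-pivot lines, and then property (3) to recover the $(j)$-pivot line of $h$. Your extra bookkeeping verifying that the $(j)$-line inside the $(j,l)$-pivot square of $h$ is a $(j)$-supporting line (so the hypothesis of (3) holds) is a detail the paper leaves implicit, and it is checked correctly.
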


\begin{proof}
Choose $j\neq l$. By definition \ref{def:cen}, it suffices to show that for each $h\in M(T)$ that if each $(j)$-supporting line of $h$ is a $\delta$-pair then the $(j)$-pivot line of $h$ is a $\delta$ pair. For $e\in n+1$ consider the $e$th shift rotation at $(j,l)$ of $h$. By $(2)$ of \ref{lem:newop}, each $(l)$-supporting line of $R^e_{j,l}(h)$ is a $\delta$-pair. We assume that $C(T;l;\delta)$ holds, therefore the $(l)$-pivot line of $R^e_{j,l}$ is a $\delta$-pair. Because this is true for every $e \in n+1$, $(3)$ of \ref{lem:newop} shows that the $(j)$-pivot line of $h$ is a $\delta$-pair. We therefore conclude that $C(T;j;\delta)$ holds.
\end{proof}

\begin{thm}
$[T]_j = [T]_l$ for all $j,l \in k$.
\end{thm}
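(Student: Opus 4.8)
The plan is to deduce the theorem immediately from Proposition \ref{prop:centraliff}, which has already done essentially all the work. The key observation is that $[T]_j$ and $[T]_l$ are both defined as meets of sets of congruences, namely $[T]_j = \Meet\{\delta : C(T;j;\delta)\}$ and $[T]_l = \Meet\{\delta : C(T;l;\delta)\}$, so it suffices to show that these two sets of congruences coincide: $\{\delta : C(T;j;\delta)\} = \{\delta : C(T;l;\delta)\}$.

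First I would fix $j, l \in k$. If $j = l$ there is nothing to prove, so assume $j \neq l$. For the inclusion $\{\delta : C(T;l;\delta)\} \subseteq \{\delta : C(T;j;\delta)\}$, take any $\delta \in \Con(\A)$ with $C(T;l;\delta)$. By Proposition \ref{prop:centraliff}, $C(T;i;\delta)$ holds for all $i \in k$, in particular $C(T;j;\delta)$, so $\delta$ belongs to the right-hand set. The reverse inclusion is symmetric: starting from $C(T;j;\delta)$, Proposition \ref{prop:centraliff} (applied with the roles of $j$ and $l$ interchanged) gives $C(T;l;\delta)$. Hence the two sets are equal, and taking meets yields $[T]_j = [T]_l$.

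There is really no obstacle here — the genuine content was already packaged into Lemma \ref{lem:newop} (the construction of the shift rotations $R^e_{j,l}$) and Proposition \ref{prop:centraliff}. The only thing to be slightly careful about is the notation: the proposition is stated with a script $\mathcal{C}(T;l;\delta)$ while Definition \ref{def:cen} writes $C(T;l;\delta)$; these denote the same condition, so the argument goes through verbatim. I would also remark (or leave to Remark \ref{diffcomm}) that this establishes the full symmetry statement (4): since $[\theta_0,\dots,\theta_{k-1}] = [T]_j$ for every $j$ and likewise any permuted tuple $(\theta_{\sigma(0)},\dots,\theta_{\sigma(k-1)})$ has commutator equal to $[T]_{\sigma^{-1}(k-1)} = [T]_l$ for the appropriate $l$, all these values agree.

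\begin{proof}
Fix $j, l \in k$. If $j = l$ the claim is trivial, so assume $j \neq l$. It suffices to show that $\{\delta \in \Con(\A) : C(T;j;\delta)\} = \{\delta \in \Con(\A) : C(T;l;\delta)\}$, since $[T]_j$ and $[T]_l$ are the meets of these two sets. If $C(T;l;\delta)$ holds, then by Proposition \ref{prop:centraliff} we have $C(T;i;\delta)$ for all $i \in k$, in particular $C(T;j;\delta)$. Conversely, if $C(T;j;\delta)$ holds, then applying Proposition \ref{prop:centraliff} with $j$ in the role of $l$ gives $C(T;i;\delta)$ for all $i\in k$, in particular $C(T;l;\delta)$. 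Thus the two sets coincide and $[T]_j = \Meet\{\delta : C(T;j;\delta)\} = \Meet\{\delta : C(T;l;\delta)\} = [T]_l$.
\end{proof}

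Combining this with Remark \ref{diffcomm}, for any permutation $\sigma$ of $k$ we obtain $[\theta_0,\dots,\theta_{k-1}] = [T]_{k-1} = [T]_j = [\theta_{\sigma(0)}, \dots, \theta_{\sigma(k-1)}]$ for the appropriate index $j$, which establishes the Symmetry property (4).
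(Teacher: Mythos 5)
Your argument is correct and is exactly the paper's proof, just written out in more detail: the paper also deduces the theorem immediately from Proposition \ref{prop:centraliff} by noting that $\{\delta : C(T;j;\delta)\} = \{\delta : C(T;l;\delta)\}$ and taking meets. Your extra remarks on the notation and on Remark \ref{diffcomm} are consistent with how the paper packages the full symmetry statement.
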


\begin{proof}
$[T]_j = \bigwedge \{\delta : \mathcal{C}(T; j; \delta) \} = \bigwedge \{\delta : \mathcal{C}(T; l; \delta) \} = [T]_l$.

\end{proof}

We can now omit the coordinate $j$ when stating $C(T;j;\delta)$ or referring to $[T]_j$, writing $C(T;\delta)$ and $[T]$ instead.

\section{Generators of Higher Commutator}
In this section we construct for a sequence of congruences $T=(\theta_0, \dots, \theta_{k-1}) \in \Con(\A)^k$ a set of generators $X(T)$ for $[T]$. The idea of the construction is to consider all possible sequences of consecutive shift rotations for an arbitrary $T$-matrix $h$. Each such sequence will produce a $T$-matrix that is constant on all $(k-1)$-supporting lines. The $(k-1)$-pivot line of such a $T$-matrix must belong to any $\delta$ such that $C(T;\delta)$ holds. This is illustrated for $3$-dimensional matrices in Figure \ref{fig:gen1}, where constant pairs are indicated with bold.

\vspace{10mm}

\begin{figure}[!ht]
\begin{center}
\includegraphics{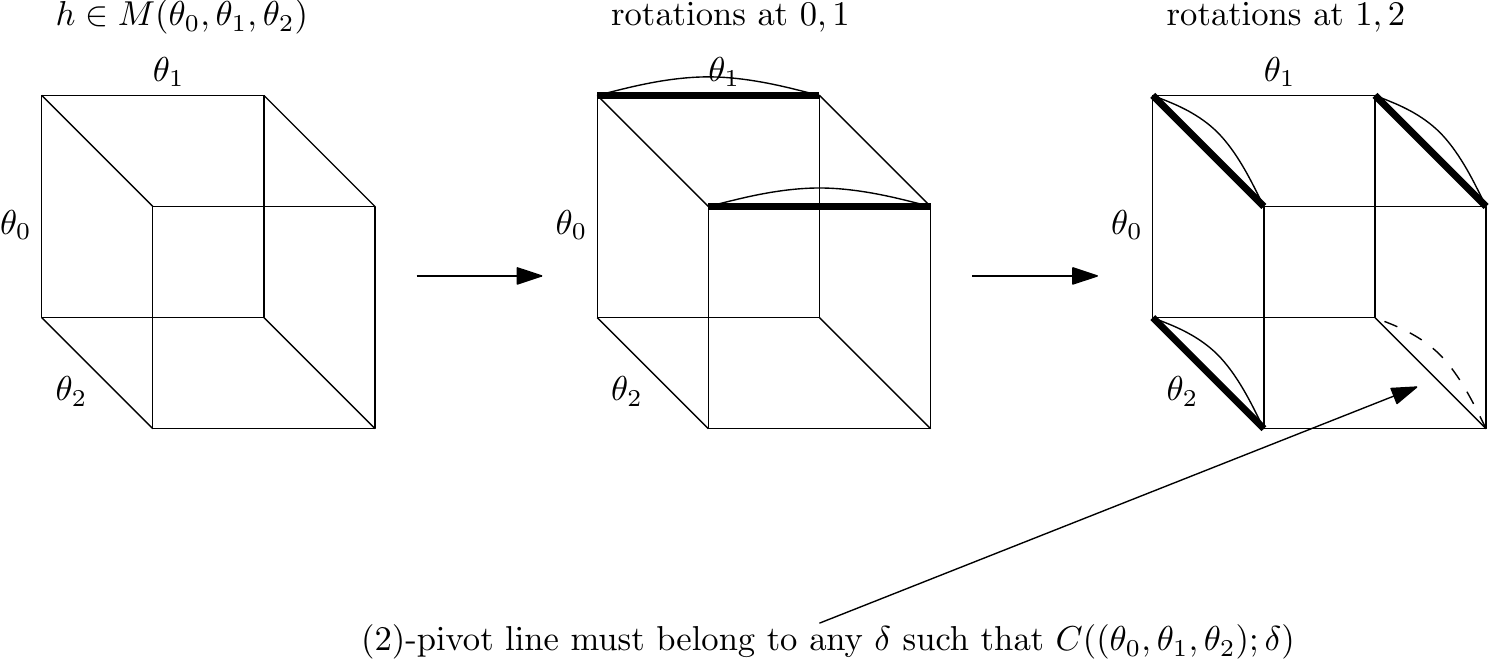}
\end{center}
\caption{Ternary Generators}\label{fig:gen1}
\end{figure}

As usual, let $\var$ be a congruence modular variety with Day terms $m_e$ for $e\in n+1$, and let $T= (\theta_0, \dots, \theta_{k-1}) \in \Con(\A)^k$ for $\A\in \var$. For a $T$-matrix $h$ we will apply a composition of $k-1$ many shift rotations, first at $(0,1)$, then at $(1,2)$, ending at $(k-2,k-1)$. For each stage there are $n+1$ many choices of Day terms, each giving a different shift rotation. It is therefore quite natural to label these sequences of shift rotations with branches belonging to the tree of height $k$ with $n+1$ many successors of each vertex. Set $$\mathbb{D}_{ k} = \langle (n+1)^{<k}; < \rangle, $$
where for $d_1, d_2 \in (n+1)^{<k}$, we have $d_1<d_2$ if $d_2$ extends $d_1$. Note that $\mathbb{D}_{k}$ has the empty sequence $\emptyset$ as a root.

\begin{lem}\label{lem:pairseq}
Let $\var$ be a variety with Day terms $m_e$ for $ e \in n+1$. Let $T= (\theta_0, \dots, \theta_{k-1}) \in \Con(\A)^k$. Let $h\in M(T)$ be labeled by $\tau = (t, \mathcal{P})$. Set $h^\emptyset = h$. For each non-empty $d =(d_0, \dots , d_i) \in \mathbb{D}_{k}$ there is a $T$-matrix $h^d \in M(T)$ labeled by some $\tau^d = (t^d ; \mathcal{P}^d)$ such that 

\begin{enumerate}

\medskip
\item 
$h^d = R_{i, i+1}^{d(i)}(h^c)$, where $c$ is the predecessor of $d$.
\medskip

\item
Let $f \in 2^{k \setminus \{i+1\}}$ be such that $f(j) = 0$ for some $j\in i+1$. Then the $(i+1)$-supporting line of $h^d$ at $f$:
$$ (h^d)_f = \left[  \begin{array}{cc} 
	(t^d)_f(\textbf{a}_{i+1}^d) & (t^d)_f(\textbf{b}_{i+1}^d)\\
				\end{array}
\right]
$$
is a constant pair.

\end{enumerate}

\end{lem}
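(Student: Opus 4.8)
The plan is to prove this by induction on the length of $d$, using Lemma \ref{lem:newop} to control how a single shift rotation transforms the cross-section squares, and hence the supporting lines, of the matrices $h^c$. The construction of the $h^d$ is forced: for $d = (d_0, \dots, d_i)$ with predecessor $c = (d_0, \dots, d_{i-1})$, we simply \emph{define} $h^d = R_{i,i+1}^{d(i)}(h^c)$, which is a $T$-matrix by Lemma \ref{lem:newop}, so it is labeled by some $\tau^d = (t^d, \mathcal{P}^d)$; this gives clause (1) by construction. The content is entirely in clause (2), and the induction must be set up so that the inductive hypothesis is strong enough to feed the next step.

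The right inductive statement to carry is: after applying the shift rotations indexed by $d = (d_0, \dots, d_i)$, every $(i+1)$-cross-section line $(h^d)_f$ with $f \in 2^{k \setminus \{i+1\}}$ satisfying $f(j) = 0$ for some $j \le i$ is a constant pair. (Note clause (2) as stated only asks for the supporting lines among these, but since $f(j)=0$ already forces $f$ to differ from the all-ones function, every such line is automatically a supporting line; conversely one wants the stronger bookkeeping that \emph{all} lines in the directions $0, 1, \dots, i$ above a coordinate that has been "switched to $0$" are constant, so that the next rotation at $(i+1, i+2)$ has the constant top rows it needs.) For the base case $d = (d_0)$, we have $h^d = R_{0,1}^{d_0}(h)$, and part (1) of Lemma \ref{lem:newop} gives explicitly that every $(0,1)$-cross-section square of $h^d$ has top row $[s_{f^*}\ s_{f^*}]$, a constant pair; the $(1)$-cross-section lines of $h^d$ are exactly the rows of these squares, and the top row of each square is precisely the line $(h^d)_f$ with $f(0) = 0$, so clause (2) holds.

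For the inductive step, assume the claim for $c = (d_0, \dots, d_{i-1})$ and set $h^d = R_{i,i+1}^{d(i)}(h^c)$. I would argue in two parts. First, the new constant lines created at this step: part (1) of Lemma \ref{lem:newop}, applied with the pair of coordinates $(i, i+1)$, shows that every $(i, i+1)$-cross-section square of $h^d$ has constant top row, i.e.\ every $(i+1)$-cross-section line $(h^d)_f$ with $f(i) = 0$ is constant. Second, and this is the step I expect to be the main obstacle, I must show that the constant lines inherited from $h^c$ (those $(h^c)_g$ with $g(j) = 0$ for some $j < i$) survive the rotation $R_{i,i+1}^{d(i)}$ as constant $(i+1)$-cross-section lines of $h^d$. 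This requires understanding how $R_{i,i+1}^{d(i)}$ acts on cross-section lines in the coordinate directions other than $i$ and $i+1$ — which the statement of Lemma \ref{lem:newop} does not spell out directly, since it is phrased in terms of $(i,i+1)$-cross-section squares. The key observation is that the label $\tau^e_{j,l}$ constructed in the proof of Lemma \ref{lem:newop} only alters the polynomial via an outer application of $m_e$ built from $t$ with the coordinates $j = i$ and $l = i+1$ modified, and leaves the pairs $P_m$ for $m \ne i, i+1$ untouched; consequently, if one fixes a coordinate $m \notin \{i, i+1\}$ and a value $f(m)$, passing to that slice commutes with the shift rotation. More concretely: a $(i+1)$-cross-section line $(h^d)_f$ of $h^d$, restricted appropriately, is obtained by applying $m_{d(i)}$ coordinatewise to the four $(i+1)$-cross-section lines of $h^c$ at the functions $f$ modified in coordinate $i$ according to the superscripts $0,1,2,3$ in the definition of $\mathcal P^e_{i,i+1}$. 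If $f(j) = 0$ for some $j < i$, then all four of those lines of $h^c$ are constant by the inductive hypothesis, hence their image under the coordinatewise operation $m_{d(i)}$ is constant, so $(h^d)_f$ is constant. Combining the two parts: any $f \in 2^{k \setminus \{i+1\}}$ with $f(j) = 0$ for some $j \le i$ either has $f(i) = 0$ (covered by the first part) or has $f(j) = 0$ for some $j < i$ (covered by the second part), so $(h^d)_f$ is constant in all cases, completing the induction.

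The one genuinely delicate point — and where I would be most careful in writing out the details — is the bookkeeping in the second part: verifying that the restriction-of-a-shift-rotation-to-a-slice operation is itself the corresponding shift rotation on the slice (equivalently, that $R^e_{i,i+1}$ respects the tensor decomposition of $M(T)$ along coordinates $\notin \{i,i+1\}$). This follows from inspecting the explicit label $\tau^e_{j,l} = (t^e_{j,l}, \mathcal P^e_{j,l})$ in the proof of Lemma \ref{lem:newop} — the pairs in coordinates other than $i, i+1$ are copied verbatim, and $t^e_{j,l}$ depends on the remaining variable tuples $\textbf z_m$ only through the original $t$ — but it should be stated cleanly rather than left implicit, since the whole generator construction of the next section rests on it.
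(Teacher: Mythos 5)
Your overall architecture is the paper's: define $h^d = R^{d(i)}_{i,i+1}(h^c)$, induct along $\mathbb{D}_k$, split the functions $f$ with a zero at some $j\le i$ into those with $f(i)=0$ and those with $f(j)=0$ for some $j<i$, and use the fact that the rotation acts slice-by-slice in the untouched coordinates (which is indeed how the paper computes, via the cross-section square at $f^*=f\vert_{k\setminus\{i,i+1\}}$). Your first case is correct and is exactly part (1) of Lemma \ref{lem:newop}. The gap is in your second case. When you write the $(i+1)$-line $(h^d)_f$ as $m_{d(i)}$ applied coordinatewise to four pairs coming from $h^c$, two of those pairs are doubled vertices, but the other two are genuine $(i+1)$-cross-section lines of $h^c$, i.e.\ lines in direction $i+1$ --- whereas your inductive hypothesis at stage $c=(d_0,\dots,d_{i-1})$ only asserts constancy of lines of $h^c$ in direction $i$. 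Those direction-$(i+1)$ lines need not be constant: already for $k=3$ and $c=(d_0)$, the $(2)$-line of $h^c=R^{d_0}_{0,1}(h)$ at a function whose $0$-coordinate is $0$ is $\langle t(\textbf{a}_0,\textbf{b}_1,\textbf{a}_2),\, t(\textbf{a}_0,\textbf{b}_1,\textbf{b}_2)\rangle$, merely a $\theta_2$-pair. So ``coordinatewise $m_{d(i)}$ of constant pairs'' is not the mechanism, and this step of your induction fails as written.

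The correct argument (the paper's second case) uses the hypothesis in direction $i$: if $f(j)=0$ for some $j<i$, then both columns of the $(i,i+1)$-cross-section square of $h^c$ at $f^*$ are $(i)$-lines of $h^c$ at functions having a zero below $i$, hence constant by induction, so that square has the form $\left[\begin{smallmatrix} r & s\\ r & s\end{smallmatrix}\right]$. Plugging this into the explicit formula of Lemma \ref{lem:newop}(1), the rotated square is $\left[\begin{smallmatrix} s & s\\ m_{d(i)}(s,r,r,s) & m_{d(i)}(s,s,s,s)\end{smallmatrix}\right]$, which the Day identity $m_e(x,y,y,x)\approx x$ of Proposition \ref{prop:day} collapses to the constant square on $s$; whichever of its rows is $(h^d)_f$ (according to the value $f(i)$) is then a constant pair. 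In the $k=3$ example above this is precisely why the output is constant even though both genuine slot-pairs equal the non-constant pair $\langle t(\textbf{a}_0,\textbf{b}_1,\textbf{a}_2), t(\textbf{a}_0,\textbf{b}_1,\textbf{b}_2)\rangle$. So your statement (2) is the right inductive statement and your case split is fine, but the ``inherited'' constant lines must be justified through the direction-$i$ lines together with $m_e(x,y,y,x)\approx x$, not through constancy of the four pairs you name.
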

\begin{proof}
The lemma is trivially true for $h^\emptyset=h$. Suppose it holds for $c$ and let $d$ be a successor of $c$. Let $f \in 2^{k \setminus \{i+1\}}$ be such that $f(j) = 0$ for some $j\in i+1$. We need to establish that the supporting line 
$$ (h^d)_f = \left[  \begin{array}{cc} 
	(t^d)_f(\textbf{a}_{i+1}^d) & (t^d)_f(\textbf{b}_{i+1}^d)\\
				\end{array}
\right]
$$
is a constant pair. Let $f^* = f\vert_{2^{k\setminus \{i, i+1\}}}$ be the restriction of $f$ to $k\setminus \{i, i +1\}$. We treat two cases:

\begin{enumerate}
\item Suppose $j= i$, and for no other $j\in i+1$ does $f(j) = 0$. Consider the $(i, i+1)$-cross-section square of $h^c$ at $f^*$:

$$(h^c)_{f^*} = \left[  \begin{array}{cc} 
	r_{f^*}  & s_{f^*}\\
	u_{f^*} & v_{f^*}\\
				\end{array}
\right]
$$

By \ref{lem:newop}, the $(i, i+1)$-cross-section of $m^d$ at $f^*$ is:
$$
(h^d)_{f^*}= \left[  \begin{array}{cc} 
	s_{f^*}  & s_{f^*}\\
	m_{d(i)}(s_{f^*}, r_{f^*}, u_{f^*}, v_{f^*}) & m_{d(i)}(s_{f^*}, s_{f^*}, v_{f^*}, v_{f^*})\\
				\end{array}
\right]
$$
The $(i+1)$-supporting line of $h^d$ at $f$ is the top row of the above square, that is, 
$$ (h^d)_f = \left[  \begin{array}{cc} 
	s_{f^*}  & s_{f^*}\\
				\end{array}
\right]
$$
\item
Suppose that $f(j)= 0$ for some $j \in i$. In this case the inductive assumption applies to $h^c$, so columns of the $(i, i+1)$-cross-section of $h^c$ at $f^*$ are therefore constant:
$$(h^c)_{f^*} = \left[  \begin{array}{cc} 
	r_{f^*}  & s_{f^*}\\
	r_{f^*} & s_{f^*}\\
				\end{array}
\right]
$$
We therefore compute the $(i, i+1)$-cross-section of $h_{i+1}^d$ at $f^*$ as:

$$
(h^d)_{f^*}= \left[  \begin{array}{cc} 
	s_{f^*}  & s_{f^*}\\
	m_{d(i)}(s_{f^*}, r_{f^*}, r_{f^*}, s_{f^*}) & m_{d(i)}(s_{f^*}, s_{f^*}, s_{f^*}, s_{f^*})\\
				\end{array}
\right] = 
\left[  \begin{array}{cc} 
	s_{f^*}  & s_{f^*}\\
	s_{f^*}  & s_{f^*}\\
				\end{array}
\right]
$$
The $(i+1)$-cross-section line of $h^d$ at $f$ is either the top or bottom row of the above square, if $f(i)=0$ or $f(i)=1$ respectively. Therefore 
$$ (h^d)_f = \left[  \begin{array}{cc} 
	s_{f^*}  & s_{f^*}\\
				\end{array}
\right]
$$
\end{enumerate}
\end{proof}

Let $d = (d_0,\dots, d_{k-2})$ be a leaf of $\mathbb{D}_{k}$. By \ref{lem:pairseq}, all $(k-1)$-supporting lines of $h^d$ are constant pairs $\langle s,s \rangle$.  If we assume that $\mathcal{C}(T;\delta)$ holds then the $(k-1)$-pivot line of $m^d$ must belong to $\delta$. That is, $(h^d)_{\textbf{k-1}} \in \delta$ for any $h\in M(T)$ and any leaf $d \in \mathbb{D}_{k}$. Set 
$$X(T) = \{ (h^d)_{\textbf{k-1}} : h\in M(T), d \in \mathbb{D}_{k} \textnormal{ a leaf } \},$$ see Figure \ref{fig:tree} for a picture.
\vspace{40mm}

\begin{figure}[!ht]

\begin{center}
\includegraphics[scale=.9]{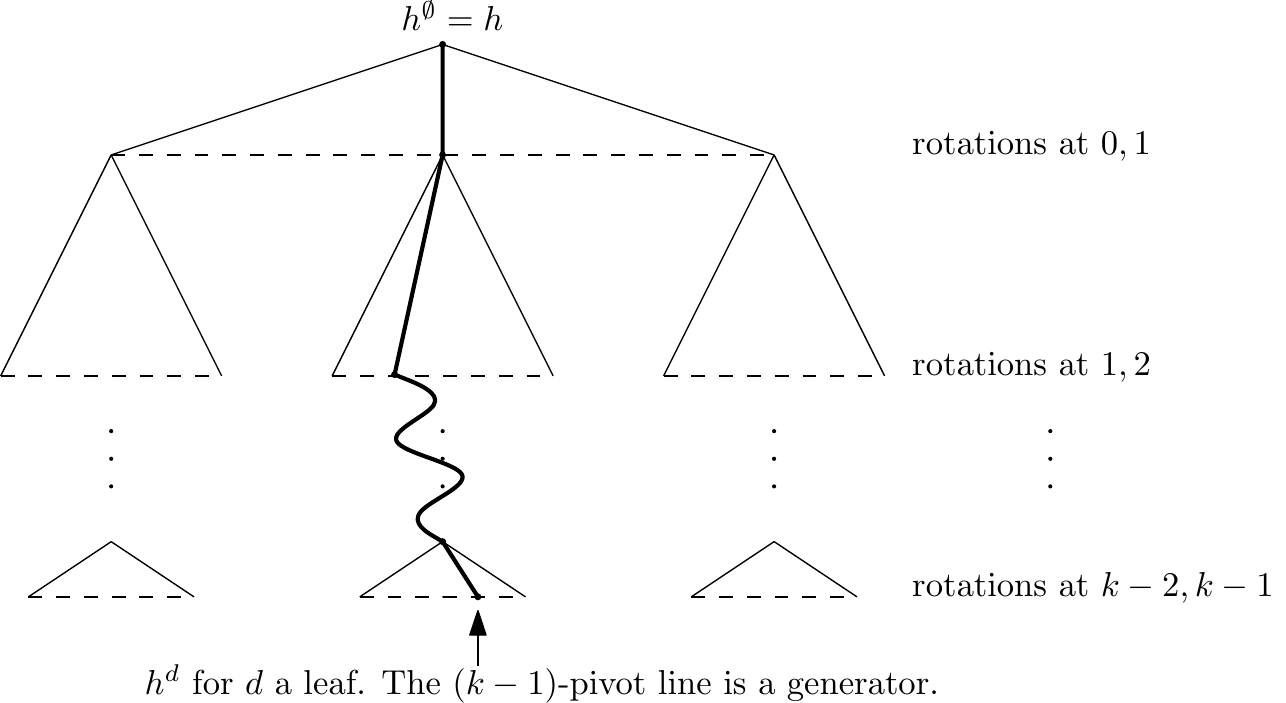}
\end{center}
\caption{Tree}\label{fig:tree}

\end{figure}
\begin{figure}[!ht]
\begin{center}
\includegraphics[scale=.8]{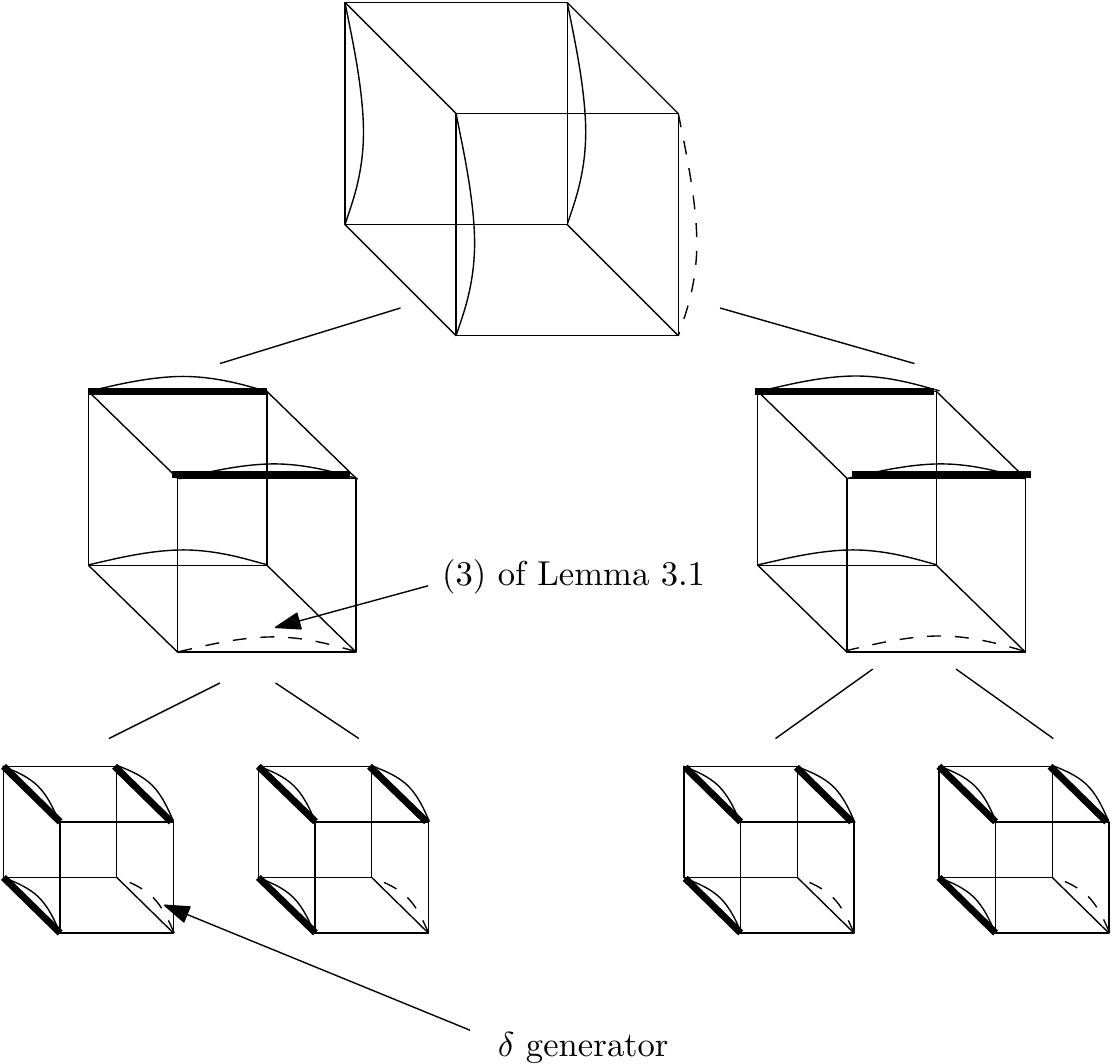}
\end{center}
\caption{Ternary Generator Tree}\label{fig:threetree}

\end{figure}

\newpage

We have just observed that 
\begin{lem}\label{genincen}
Let $T = (\theta_0, \dots, \theta_{k-1}) \in \Con(\A)^k$ for $\A \in \var$, where $\var$ is congruence modular. Suppose that $\delta \in \Con(\A)$ is such that $\mathcal{C}(T; \delta)$ holds. Then $X(T) \subset \delta$. In particular, $\Cg(X(T)) \leq [T]$

\end{lem}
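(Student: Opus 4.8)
The statement has two parts, and the first is essentially already established by the discussion preceding Lemma~\ref{lem:pairseq} and by Lemma~\ref{lem:pairseq} itself; the plan is to assemble these into a clean argument and then deduce the ``in particular'' clause. First I would fix $\delta \in \Con(\A)$ with $\mathcal{C}(T;\delta)$ holding, take an arbitrary $T$-matrix $h \in M(T)$ and an arbitrary leaf $d = (d_0, \dots, d_{k-2}) \in \mathbb{D}_{k}$, and consider the $T$-matrix $h^d$ produced by Lemma~\ref{lem:pairseq}. By iterating part (2) of that lemma up the branch from the root to $d$ --- the final application being at the coordinate pair $(k-2, k-1)$ --- every $(k-1)$-supporting line of $h^d$ is a constant pair $\langle s, s\rangle$, hence a $\delta$-pair. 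Since $\mathcal{C}(T;\delta)$ is exactly the assertion of Definition~\ref{def:cen} (using the symmetry established in Section~3 to drop the coordinate), the $(k-1)$-pivot line $(h^d)_{\mathbf{k-1}}$ of $h^d$ must then be a $\delta$-pair. As $h$ and $d$ were arbitrary, this shows $X(T) \subseteq \delta$.

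For the second part, recall that by Definition~\ref{def:commutator} and the symmetry theorem we have $[T] = \bigwedge\{\delta : \mathcal{C}(T;\delta)\}$. The first part gives $X(T) \subseteq \delta$ for every $\delta$ in this meet, hence $X(T) \subseteq [T]$. Since $[T]$ is a congruence and $\Cg(X(T))$ is by definition the least congruence containing $X(T)$, we conclude $\Cg(X(T)) \leq [T]$.

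The only place where any care is required is verifying that the inductive hypothesis of Lemma~\ref{lem:pairseq} genuinely covers \emph{all} $(k-1)$-supporting lines of the terminal matrix $h^d$ --- that is, that for the leaf $d$ (where $i + 1 = k-1$), every $f \in 2^{k \setminus \{k-1\}}$ with $f(j) = 0$ for some $j \in k-1$ yields a constant $(k-1)$-cross-section line. This is precisely statement (2) of Lemma~\ref{lem:pairseq} applied with $i = k-2$, so there is nothing new to prove; one simply notes that a $(k-1)$-supporting line is by Definition~\ref{supportpiv} exactly a $(k-1)$-cross-section line at some $f$ that is not the constant-$1$ function, i.e.\ $f(j) = 0$ for some $j$. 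Thus no genuine obstacle arises --- the content of the lemma is entirely bookkeeping, packaging the work already done in Lemma~\ref{lem:pairseq} and the symmetry results of Section~3 into the single statement that the explicitly constructed set $X(T)$ sits below the commutator.
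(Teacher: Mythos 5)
Your proof is correct and follows the paper's own route exactly: the paper treats this lemma as an immediate consequence of Lemma~\ref{lem:pairseq} (every $(k-1)$-supporting line of $h^d$ for a leaf $d$ is constant, hence a $\delta$-pair, so $\mathcal{C}(T;\delta)$ forces the pivot line into $\delta$), followed by the observation that $[T]$ is the meet of all such $\delta$. Your extra remark checking that statement (2) of Lemma~\ref{lem:pairseq} with $i = k-2$ really covers every supporting line is a sound piece of bookkeeping that the paper leaves implicit.
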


By induction over $\mathbb{D}_{k}$ we now demonstrate the following

\begin{lem}\label{ceningen}
Let $\delta= \Cg(X(T))$. Then $\mathcal{C}(T;\delta)$ holds. In particular, $[T] \leq \Cg(X(T))$. 
\end{lem}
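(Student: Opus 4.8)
The plan is to show that $\delta = \Cg(X(T))$ satisfies $\mathcal{C}(T;\delta)$ by verifying the centralizer condition $C(T;k-1;\delta)$ directly: given an arbitrary $h \in M(T)$ all of whose $(k-1)$-supporting lines are $\delta$-pairs, I must show the $(k-1)$-pivot line of $h$ is a $\delta$-pair. The key idea is to run the shift-rotation construction from Lemma~\ref{lem:pairseq} in reverse, using part (3) of Lemma~\ref{lem:newop} at each stage, and to prove by downward induction over $\mathbb{D}_k$ that the relevant pivot lines of the intermediate matrices $h^d$ remain $\delta$-pairs. Since by construction every leaf $d$ produces a matrix $h^d$ whose $(k-1)$-pivot line $(h^d)_{\mathbf{k-1}}$ lies in $X(T) \subseteq \delta$, the base of the induction is immediate.

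First I would set up the induction. For a vertex $c = (c_0,\dots,c_{i-1}) \in \mathbb{D}_k$ (so $h^c$ is obtained from $h$ by shift rotations at $(0,1),\dots,(i-1,i)$), the inductive hypothesis to carry is: \emph{the $(i)$-pivot line of $h^c$ is a $\delta$-pair}, together with the observation from Lemma~\ref{lem:pairseq} that all $(i)$-supporting lines of $h^c$ lying ``below'' the already-rotated coordinates are constant. The base case is the leaves: when $i = k-1$, the $(k-1)$-pivot line of $h^d$ is exactly $(h^d)_{\mathbf{k-1}} \in X(T) \subseteq \delta$, which holds because $\delta$ is a congruence containing $X(T)$. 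For the inductive step, suppose the claim holds for every successor $d$ of $c$; I want it for $c$. Apply part (3) of Lemma~\ref{lem:newop} with $(j,l) = (i,i+1)$: for the matrix $g := h^c$, the $(i)$-pivot line of $g$ is a $\delta$-pair if and only if the $(i+1)$-pivot line of $R^{e}_{i,i+1}(g) = h^{c\concat e}$ is a $\delta$-pair for all $e \in n+1$ — \emph{provided} the hypothesis of part (3) holds, namely that the $(i)$-supporting line belonging to the $(i,i+1)$-pivot square of $g$ is a $\delta$-pair. By the inductive hypothesis applied to each successor $c\concat e$, the $(i+1)$-pivot line of each $h^{c\concat e}$ is a $\delta$-pair; hence the ``if'' direction of Lemma~\ref{lem:newop}(3) gives that the $(i)$-pivot line of $h^c$ is a $\delta$-pair, completing the step.

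The one delicate point — and the main obstacle — is verifying the side hypothesis of Lemma~\ref{lem:newop}(3) at each stage, i.e.\ that the $(i)$-supporting line inside the $(i,i+1)$-pivot square of $h^c$ is a $\delta$-pair. For $c = \emptyset$ (the very first rotation, $i = 0$) there are no $(0)$-supporting lines at all, so the condition is vacuous. For $i \geq 1$, the $(i)$-supporting line sitting in the $(i,i+1)$-pivot square corresponds to a cross-section at a function $f \in 2^{k\setminus\{i\}}$ with $f(i+1) = 0$ (the non-pivot value of coordinate $i+1$) and $f \equiv 1$ on all other coordinates; since $f(i+1) = 0$ and $i+1 \le i+1$, Lemma~\ref{lem:pairseq}(2) — applied to $h^c$, whose last rotation was at $(i-1,i)$ — shows this line is in fact \emph{constant}, hence trivially a $\delta$-pair. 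So the side hypothesis is always met, either vacuously or via the constancy already established by Lemma~\ref{lem:pairseq}.

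Finally, once $\mathcal{C}(T;\delta)$ is established for $\delta = \Cg(X(T))$, Definition~\ref{def:commutator} gives $[T] = \bigwedge\{\delta' : \mathcal{C}(T;\delta') \text{ holds}\} \leq \Cg(X(T))$, which is the ``in particular'' clause. Combined with Lemma~\ref{genincen}, this yields $[T] = \Cg(X(T))$. I would present the downward induction cleanly by indexing on the height $k-1-i$ remaining in the tree, so that the leaves are the base case and the root $\emptyset$ (giving the $(0)$-pivot line of $h = h^\emptyset$, which is the $(0)$-pivot line we originally wanted, equal to the $(k-1)$-pivot line after reindexing by symmetry) is the conclusion; here I would invoke the symmetry result of Section~3 to move freely between the pivot coordinate used in the tree construction and the coordinate $k-1$ appearing in the definition of $X(T)$.
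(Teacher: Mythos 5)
There is a genuine gap, and it sits exactly at the point you flagged as delicate: the side hypothesis of Lemma~\ref{lem:newop}(3). Your two ways of discharging it both fail. First, for $c = \emptyset$ the condition is not vacuous: a $k$-dimensional matrix has $2^{k-1}-1$ many $(0)$-supporting lines, and the one lying in the $(0,1)$-pivot square (the line at $f \in 2^{k\setminus\{0\}}$ with $f(1)=0$ and $f\equiv 1$ elsewhere) is precisely a line whose $\delta$-pair status must come from somewhere. Second, for $i\geq 1$ your appeal to Lemma~\ref{lem:pairseq}(2) is an index error: applied to $h^c$ with $c=(c_0,\dots,c_{i-1})$, that lemma gives constancy of the $(i)$-supporting line at $f$ only when $f(j)=0$ for some $j \in i$, i.e.\ for some $j<i$ among the already-rotated coordinates; your $f$ has $f\equiv 1$ on all of those and is $0$ only at coordinate $i+1$, which is exactly the one case the lemma excludes. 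And indeed that line is not constant in general: it is the bottom row $\langle m_{c_{i-1}}(s,r,u,v),\, m_{c_{i-1}}(s,s,v,v)\rangle$ of the $(i-1,i)$-square of $h^c$ sitting over a cross-section of $h^{c'}$ with no constant columns, so constancy simply is not available.

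The missing ingredient is part (2) of Lemma~\ref{lem:newop}, which you never invoke — nor do you ever use your own centrality hypothesis, which is a symptom of the same problem. The paper's argument fixes the coordinate at $0$ (by the symmetry Proposition~\ref{prop:centraliff} it suffices to verify $C(T;0;\delta)$), assumes all $(0)$-supporting lines of $h$ are $\delta$-pairs, and then by repeated application of Lemma~\ref{lem:newop}(2) concludes that \emph{every} $(i+1)$-supporting line of $h^d$ is a $\delta$-pair for each $d=(d_0,\dots,d_i)$; in particular the one inside the next $(i+1,i+2)$-pivot square, which is exactly the side hypothesis needed to run your downward induction with part (3). Your framing of the hypothesis at coordinate $k-1$ also clashes with the rotation chain, which starts at $(0,1)$ and transfers $\delta$-pair supporting lines forward from coordinate $0$; stating the hypothesis at $0$ (and citing Proposition~\ref{prop:centraliff} once, up front) removes the awkward reindexing at the end. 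With those corrections your skeleton — base case at the leaves via $X(T)\subseteq\delta$, downward induction via Lemma~\ref{lem:newop}(3), and the ``in particular'' clause from Definition~\ref{def:commutator} — matches the paper's proof.
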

\begin{proof}

Take $h \in M(T)$. By symmetry, it suffices to consider that all $(0)$-supporting lines of $h$ are $\delta$-pairs. We need to show that the $(0)$-pivot line of $h$ is also a $\delta$-pair. By a repeated application of (2) of Lemma \ref{lem:newop}, each $(i+1)$-supporting line of $h^d$ is a $\delta$-pair, where $d = (d_0, \dots, d_i) \in \mathbb{D}_{k}$. Take $c = (c_0, \dots, c_{i-1}) \in \mathbb{D}_{k}$, and suppose that for all successors $d = (c_0,\dots, c_{i-1}, d_i)$ of $c$ that the $(i+1)$-pivot line of $h^d$ is a $\delta$-pair. Applying (3) of Lemma \ref{lem:newop} yields that the $(i)$-pivot line of $h^c$ is a $\delta$-pair. Because $\delta = \Cg(X(T))$, the $(k-1)$-pivot line of $h^d$ is a $\delta$-pair for any $d \in \mathbb{D}_{k}$ that is a leaf. By induction it follows that the $(0)$-pivot line of $h$ is a $\delta$-pair, as desired. See Figure \ref{fig:threetree} for a picture.

\end{proof}

\begin{thm}
The following hold:
\begin{enumerate}
\item
$[T] = \Cg(X(T))$
\item
$\mathcal{C}(T;\delta)$ if and only if $[T] \leq \delta$
\end{enumerate}
\end{thm}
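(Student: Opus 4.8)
The plan is to obtain both statements as immediate consequences of Lemmas \ref{genincen} and \ref{ceningen}, together with Definition \ref{def:commutator}. First I would prove (1). Setting $\delta = \Cg(X(T))$, Lemma \ref{ceningen} tells us that $\mathcal{C}(T;\delta)$ holds, so by the definition of $[T]$ as the meet of all $\delta'$ with $\mathcal{C}(T;\delta')$ we get $[T] \leq \Cg(X(T))$; this inclusion is also recorded explicitly at the end of Lemma \ref{ceningen}. Conversely, Lemma \ref{genincen} says that whenever $\mathcal{C}(T;\delta')$ holds we have $X(T) \subseteq \delta'$, hence $\Cg(X(T)) \leq \delta'$; taking the meet over all such $\delta'$ gives $\Cg(X(T)) \leq [T]$. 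Combining the two inequalities yields $[T] = \Cg(X(T))$.

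For (2), the forward direction is the nontrivial one: assuming $[T] \leq \delta$, I must show $\mathcal{C}(T;\delta)$. By part (1), $[T] = \Cg(X(T))$, and Lemma \ref{ceningen} established that $\mathcal{C}(T;\Cg(X(T)))$ holds. Now I would invoke the elementary observation recorded just after Definition \ref{def:bulcen} (rephrased in the $C(T;j;\delta)$ notation, using the symmetry established in Section 3): centralization is monotone in the modulus, i.e. if $\mathcal{C}(T;\beta)$ holds and $\beta \leq \delta$ then $\mathcal{C}(T;\delta)$ holds, since the defining implication (*) only becomes easier to satisfy as the target congruence grows. Applying this with $\beta = \Cg(X(T)) = [T] \leq \delta$ gives $\mathcal{C}(T;\delta)$. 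The reverse direction of (2) is definitional: if $\mathcal{C}(T;\delta)$ holds then $\delta$ is one of the congruences over which the meet in Definition \ref{def:commutator} is taken, so $[T] \leq \delta$.

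The only step requiring any care is the monotonicity of $\mathcal{C}(T;-)$ used in the forward direction of (2), but this is genuinely routine: the predicate $\mathcal{C}(T;j;\delta)$ asserts that if every $(j)$-supporting line of a $T$-matrix $m$ is a $\delta$-pair then the $(j)$-pivot line is a $\delta$-pair, and enlarging $\delta$ both weakens the hypothesis and weakens the conclusion in a way that preserves the implication — more precisely, if $\beta \leq \delta$ then every $\beta$-pair is a $\delta$-pair, so the $\mathcal{C}(T;\beta)$ implication, applied after noting each supporting line is already a $\beta$-pair, is not directly available; instead one argues that the set $\{\delta' : \mathcal{C}(T;\delta')\}$ is upward closed, which follows because the $(j)$-pivot line of $m$ is forced into $[T] = \Cg(X(T))$ by Lemma \ref{ceningen} regardless of $\delta$, hence into any $\delta \geq [T]$. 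In fact this last remark shows the cleanest route: for \emph{any} $m \in M(T)$ all of whose $(j)$-supporting lines are $\delta$-pairs — indeed for any $m$ whatsoever once part (1) is in hand — the $(j)$-pivot line lies in $\Cg(X(T)) = [T] \leq \delta$, which is exactly $\mathcal{C}(T;\delta)$. I expect no real obstacle here; the theorem is a packaging of the two preceding lemmas.
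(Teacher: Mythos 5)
Your treatment of (1) and of the backward implication in (2) is correct and matches the paper's intent: everything is meant to follow by combining Lemmas \ref{genincen} and \ref{ceningen} with Definition \ref{def:commutator}. You are also right to flag that the forward implication of (2) is the only delicate point: the set $\{\delta : \mathcal{C}(T;\delta)\}$ is meet-closed by the remark after Definition \ref{def:bulcen}, but it is not upward closed for formal reasons, since enlarging $\delta$ weakens the hypothesis of the implication (*) as well as its conclusion.

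However, your proposed fix for that direction contains a genuine error. You claim that for any $m \in M(T)$ --- ``indeed for any $m$ whatsoever'' --- the $(j)$-pivot line lies in $\Cg(X(T)) = [T]$. This is false. Taking $\delta = 1_A$, every supporting line of every $T$-matrix is vacuously a $\delta$-pair, so even your qualified version of the claim would force the pivot line of an arbitrary $T$-matrix into $[T]$; but the pivot line $\langle m_f, m_{\mathbf{1}}\rangle$ is in general only a $\theta_j$-pair. For instance, in an abelian algebra with $[1_A,1_A]=0_A$, the matrix labeled by the projection $t(\mathbf{z}_0,\mathbf{z}_1)=z_0$ with $a\neq b$ has $(0)$-pivot line $\langle a,b\rangle \notin 0_A = [T]$. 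Lemma \ref{ceningen} only forces the pivot line into $\Cg(X(T))$ when the supporting lines are $\Cg(X(T))$-pairs, which is a \emph{stronger} hypothesis than ``the supporting lines are $\delta$-pairs'' for $\delta \geq \Cg(X(T))$, so it cannot be invoked as you do. The correct route is to note that the proof of Lemma \ref{ceningen} never uses the equality $\delta = \Cg(X(T))$, only the containment $X(T) \subseteq \delta$: for any such $\delta$, repeated application of Lemma \ref{lem:newop}(2) makes all supporting lines of each $h^d$ into $\delta$-pairs, the $(k-1)$-pivot lines of the leaf matrices $h^d$ lie in $X(T) \subseteq \delta$ by definition of $X(T)$, and the induction back up the tree $\mathbb{D}_k$ via Lemma \ref{lem:newop}(3) returns the $(0)$-pivot line of $h$ as a $\delta$-pair. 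Hence $\mathcal{C}(T;\delta)$ holds for every $\delta \geq \Cg(X(T)) = [T]$, which is the forward direction of (2). With that repair your argument agrees with the paper's (one-line) proof.
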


\begin{proof}
This follows from Lemmas \ref{genincen} and \ref{ceningen}.
\end{proof}

\section{Additivity and Homomorphism Property}
We are now ready to show that the commutator is additive and is preserved by surjections. We begin by example, demonstrating additivity for the $3$-ary commutator. Let $\theta_0, \theta_1, \gamma_i (i \in I)$ be a collection of congruences of $\A$. We want to show that $[\theta_0, \theta_1, \Join_{i\in I} \gamma_i] = \Join_{i\in I} [\theta_0, \theta_1, \gamma_i]$. It is immediate that $[\theta_0, \theta_1, \Join_{i\in I} \gamma_i] \geq \Join_{i\in I} [\theta_0, \theta_1, \gamma_i]$, because of monotonicity. To demonstrate the other direction, it suffices to show that $C((\theta_0, \theta_1, \Join_{i\in I} \gamma_i); \alpha)$ holds, where $\alpha = \Join_{i\in I} [\theta_0, \theta_1, \gamma_i]$. 

Let $h \in M(\theta_0, \theta_1, \Join_{i\in I} \gamma_i)$ be labeled by $\tau = (t(\textbf{z}_0, \textbf{z}_1, \textbf{z}_2), ((\textbf{a}_0, \textbf{b}_0)), (\textbf{a}_1, \textbf{b}_1), (\textbf{a}_2, \textbf{b}_2)))$. Suppose that each $(0)$-supporting line of $h$ is an $\alpha$-pair. We need to show that the $(0)$-pivot line of $h$ is also an $\alpha$-pair.

Because $\textbf{a}_2 \equiv_{\Join_{i\in I} \gamma_i} \textbf{b}_2$, there exist tuples $\textbf{c}_0, \dots, \textbf{c}_q$ such that 
$$\textbf{a}_2 = \textbf{c}_0 \equiv_{\gamma_{i_0}} \textbf{c}_1 \dots \textbf{c}_{q-2}\equiv_{\gamma_{q-1}} \textbf{c}_q = \textbf{b}_2$$

This sequence of tuples produces the sequence of cross-section squares shown in Figure \ref{fig:add1}. Each square is a $(\theta_0, \theta_1)$-matrix labeled by $(t(\textbf{z}_0, \textbf{z}_1, \textbf{c}_s), ((\textbf{a}_0, \textbf{b}_0)), (\textbf{a}_1, \textbf{b}_1)))$, for $\textbf{c}_s$ a tuple from $\textbf{c}_0, \dots, \textbf{c}_q$. Each consecutive pair of squares labeled by $$(t(\textbf{z}_0, \textbf{z}_1, \textbf{c}_s), ((\textbf{a}_0, \textbf{b}_0)), (\textbf{a}_1, \textbf{b}_1))) \textnormal{ and }(t(\textbf{z}_0, \textbf{z}_1, \textbf{c}_{s+1}), ((\textbf{a}_0, \textbf{b}_0)), (\textbf{a}_1, \textbf{b}_1)))$$
are the $2$-cross-section squares of a $(\theta_0, \theta_1, \gamma_{i_s})$-matrix. As usual, $\alpha$-pairs are indicated with curved lines.

\begin{figure}[!ht]
\begin{center}
\includegraphics{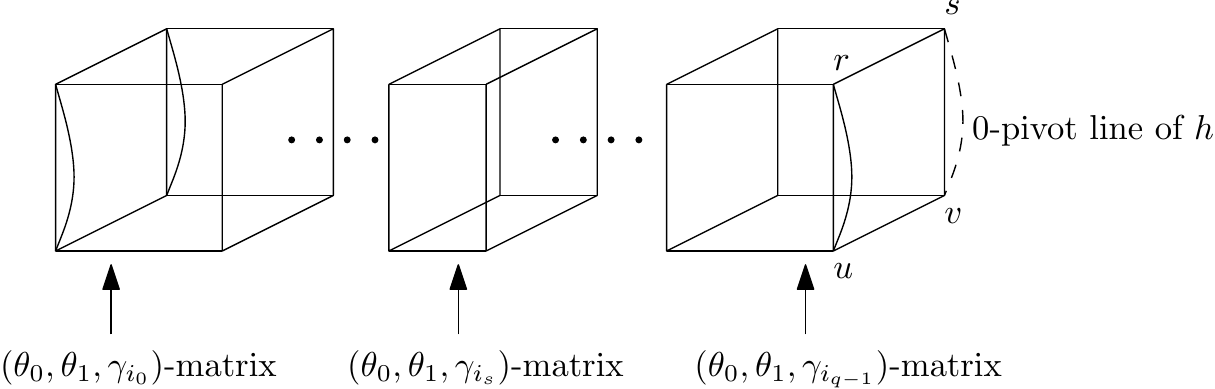}
\end{center}
\caption{Sequence of Matrices}\label{fig:add1}
\end{figure}

To show that the $(0)$-pivot line of $h$ is an $\alpha$-pair it suffices to show that $$\langle m_e(s,s,v,v), m_e(s,r,u,v) \rangle \in \alpha $$ for all $e \in n+1$. Therefore, we consider for each $e \in n+1$ the $e$-th shift rotation at $(0,1)$ of the above sequence of matrices. This is shown in Figure \ref{fig:add2}. Constant pairs are indicated with bold.

\begin{figure}
\begin{center}
\includegraphics{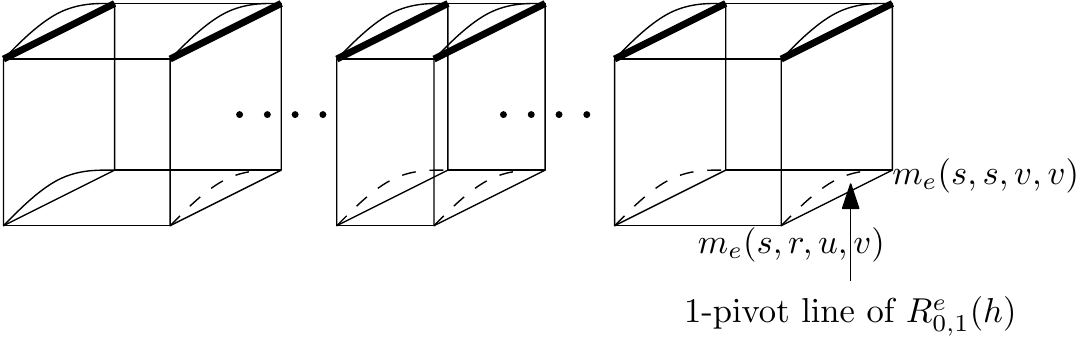}
\end{center}
\caption{Rotated Sequence}\label{fig:add2}
\end{figure}

Because $[\theta_0, \theta_1, \gamma_i] \leq \alpha$ for all $i \in I$, we have that $C(\theta_0, \theta_1, \gamma_i; \alpha)$ holds. Because each cube in the above sequence is a $(\theta_0, \theta_1, \gamma_i)$-matrix for some $i\in I$, it follows by induction that $(1)$-pivot line of $R^e_{0,1}(h)$ is an $\alpha$-pair, as desired. 

To show the additivity of a commutator of any arity, the same argument is used. For $h \in M(T)$ we consider all $h^d$ for any $d \in \mathbb{D}_k$ that is a predecessor of a leaf. By \ref{lem:pairseq}, all $(k-2)$-supporting lines that do not belong to the $(k-2,k-1)$-pivot square of $h^d$ are constant pairs. The argument is then essentially the same as the $3$-ary example above, complicated slightly by an induction over the tree $\mathbb{D}_k$.

\begin{prop}\label{additivity}
Let $\gamma_i $ for $i\in I$ be a collection of congruences of $\A$. Set $T=( \theta_0, ... , \theta_{k-1}, \Join_{i\in I} \gamma_i )$ and $T_i = (\theta_0, ... , \theta_{k-1}, \gamma_i )$, where $\theta_0, ... , \theta_{k-1} \in \Con(\A)$. Then $[T]= \Join_{i\in I} [T_i]$.
\end{prop}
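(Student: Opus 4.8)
The plan is to prove the two inequalities $[T] \geq \Join_{i\in I} [T_i]$ and $[T] \leq \Join_{i\in I} [T_i]$ separately, with only the second requiring real work. The first follows immediately from monotonicity (property (2) of the commutator): since $\gamma_i \leq \Join_{i\in I}\gamma_i$ for each $i$, we have $[T_i] \leq [T]$ for each $i$, hence $\Join_{i\in I}[T_i] \leq [T]$. For the reverse inequality, set $\alpha = \Join_{i\in I} [T_i]$; by the characterization of the commutator as the least congruence for which the centralization condition holds (the theorem of Section 4, or directly Definition \ref{def:commutator}), it suffices to verify that $C(T;\alpha)$ holds, i.e.\ that for every $T$-matrix $h \in M(T)$, if every $(k)$-supporting line of $h$ is an $\alpha$-pair then the $(k)$-pivot line of $h$ is an $\alpha$-pair (here I index the last coordinate by $k$, so $T$ has $k+1$ congruences $\theta_0,\dots,\theta_{k-1},\Join_i\gamma_i$).

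The heart of the argument is the reduction to the matrices $h^d$ furnished by Lemma \ref{lem:pairseq}. Given $h \in M(T)$ with all $(k)$-supporting lines $\alpha$-pairs, first apply Lemma \ref{ceningen}-style reasoning: by repeated use of part (2) of Lemma \ref{lem:newop}, for each $d \in \mathbb{D}_{k+1}$ that is a predecessor of a leaf (so $d$ has length $k-1$, encoding shift rotations at $(0,1),(1,2),\dots,(k-2,k-1)$), all $(k-1)$-supporting lines of $h^d$ that do not lie on the $(k-1,k)$-pivot square are constant pairs, and more generally the relevant supporting lines are $\alpha$-pairs. Since $\alpha$ is assumed to centralize, running the inductive argument of Lemma \ref{ceningen} backward over $\mathbb{D}_{k+1}$ reduces the goal to showing that for each such $d$, the $(k)$-pivot line of the $(k-1,k)$-pivot square of $h^d$ is an $\alpha$-pair: indeed if that holds, then for each $d$ the $(k)$-pivot line of $h^d$ is an $\alpha$-pair (its supporting lines being constant), whence by part (3) of Lemma \ref{lem:newop} applied up the tree the $(0)$-pivot line of $h$ is an $\alpha$-pair, which by symmetry gives $C(T;\alpha)$.

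It remains to handle a single $(k-1,k)$-pivot square $m$ of a fixed $h^d$, which is a $(\theta_{k-1}, \Join_i\gamma_i)$-matrix (more precisely the top row may already be constant, but the point is only the last column pair). Writing $\mathbf{a}, \mathbf{b}$ for the tuples with $\mathbf{a} \equiv_{\Join_i\gamma_i} \mathbf{b}$ appearing in the last coordinate, choose a Mal'cev-style chain $\mathbf{a} = \mathbf{c}_0 \equiv_{\gamma_{i_0}} \mathbf{c}_1 \equiv_{\gamma_{i_1}} \cdots \equiv_{\gamma_{i_{q-1}}} \mathbf{c}_q = \mathbf{b}$, exactly as in the $3$-ary worked example preceding the proposition. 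This produces a sequence of $(\theta_0,\dots,\theta_{k-2})$-cross-section cubes, each consecutive pair of which forms a $(\theta_0,\dots,\theta_{k-2},\theta_{k-1},\gamma_{i_s})$-matrix, i.e.\ a $T_{i_s}$-matrix (after the shift rotations already performed, whose supporting lines are constant hence $\alpha$-pairs). Since $[T_{i_s}] \leq \alpha$, the condition $C(T_{i_s};\alpha)$ holds, so stepping along the chain from $\mathbf{c}_0$ to $\mathbf{c}_q$ and using that $\alpha$ is an equivalence relation (transitivity) propagates the $\alpha$-relation from the end with constant lines all the way across, yielding that the pivot line of $m$ is an $\alpha$-pair. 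I expect the main obstacle to be purely bookkeeping: keeping the tree induction over $\mathbb{D}_{k+1}$ straight while simultaneously running the Mal'cev-chain induction over $s = 0,\dots,q$ inside each leaf's pivot square, and making sure that the intermediate cubes in the chain are genuinely $T_{i_s}$-matrices (not merely matrices for some larger congruence). No new ideas beyond Lemmas \ref{lem:newop}, \ref{lem:pairseq} and \ref{ceningen} should be needed; everything is a careful assembly of the $3$-ary case.
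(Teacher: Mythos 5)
Your overall skeleton (monotonicity for one inclusion; for the other, set $\alpha=\Join_{i\in I}[T_i]$, verify $C(T;\alpha)$ by splitting the join coordinate into a $\gamma_{i_s}$-chain and combining the shift-rotation machinery with $C(T_{i_s};\alpha)$) is the paper's, but the coordinate bookkeeping you dismiss as routine is where the argument actually breaks. You take the centralization hypothesis in the join direction: ``all $(k)$-supporting lines of $h$ are $\alpha$-pairs.'' Everything you then invoke needs the hypothesis in a rotated coordinate (the paper uses coordinate $0$): part (2) of Lemma \ref{lem:newop} transfers $\delta$-pairs from $(0)$-supporting lines through the rotations at $(0,1),\dots,(k-2,k-1)$, so with a hypothesis only about $(k)$-lines it yields nothing; and the base case of the chain induction --- that the $(k-1)$-pivot line of the rotated $\textbf{a}_k$-face $(\eta_0)^d$ is an $\alpha$-pair --- comes from the fact that every $(0)$-line of the $\textbf{a}_k$-face is a $(0)$-supporting line of $h$, pushed through the rotations via Proposition \ref{prop:shift}; under your hypothesis nothing at all is known about lines inside that face. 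Relatedly, your parenthetical that at one end of the chain ``the supporting lines are constant'' is not true: Lemma \ref{lem:pairseq} makes constant only the $(k-1)$-supporting lines outside the $(k-1,k)$-pivot square, and the one inside it (at the $\textbf{c}_s$ end) is exactly the line the induction must control.

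You also propagate the wrong line. You reduce to the $(k)$-pivot line of the $(k-1,k)$-pivot square of $h^d$ and chain across the $\gamma_{i_s}$-steps by transitivity of $\alpha$; but to put each pair $\langle t^d(\dots,\textbf{c}_s), t^d(\dots,\textbf{c}_{s+1})\rangle$ into $\alpha$ via $C(T_{i_s};\alpha)$ applied at coordinate $k$ you would need the $(k)$-supporting lines of the intermediate $T_{i_s}$-matrices to be $\alpha$-pairs, which is precisely what is unknown --- this is the naive argument whose failure is the reason the rotation apparatus exists. The paper instead applies $C(T_{i_s};\alpha)$ at coordinate $k-1$, propagating the $(k-1)$-pivot lines of the successive faces along the chain and ending with the $(k-1)$-pivot line of $(\eta_1)^d$; that is the line which feeds the backward induction over the tree via part (3) of Lemma \ref{lem:newop}, and part (3) only relates pivot lines in the rotated coordinates, so it cannot carry information about $(k)$-pivot lines up the tree. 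Finally, your conclusion ``the $(0)$-pivot line of $h$ is an $\alpha$-pair, which by symmetry gives $C(T;\alpha)$'' pairs a direction-$k$ hypothesis with a direction-$0$ conclusion; that is not $C(T;j;\alpha)$ for any $j$, so Proposition \ref{prop:centraliff} cannot be invoked. The repair is exactly the paper's arrangement: assume all $(0)$-supporting lines of $h$ are $\alpha$-pairs, restrict to the face $\eta_1$ at $\textbf{z}_k=\textbf{b}_k$, reduce by the $\mathbb{D}_k$-induction to the $(k-1)$-pivot lines of $(\eta_1)^d$, and run the chain at coordinate $k-1$ with the base case supplied by the $\textbf{a}_k$-face.
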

\begin{proof}

By monotonicity, $\Join_{i\in I} [T_i]  \leq [T]$. Set $\alpha = \Join_{i\in I} [T_i]$. We need to show that $C(T; \alpha)$ holds. Let $h\in M(T)$ be labeled by $\tau = (t(\textbf{z}_0, \dots, \textbf{z}_k), \mathcal{P})$, where $\mathcal{P}$ is a sequence of pairs of tuples $((\textbf{a}_0, \textbf{b}_0), \dots, (\textbf{a}_k, \textbf{b}_k))$. Suppose that every $(0)$-supporting line of $h$ is a $\alpha$-pair. We will show that the $(0)$-pivot line of $h$ is an $\alpha$-pair also. 

Here we have that $\textbf{a}_k \equiv_{\Join_{i\in I} \gamma_i} \textbf{b}_k$. We illustrate the $(k+1)$-dimensional matrix $h$ as the product of two $k$-dimensional matrices in Figure \ref{fig:add3}, given by evaluating $\textbf{z}_k$ at either $\textbf{a}_k$ or $\textbf{b}_k$. These two matrices are called $\eta_0$ and $\eta_1$ respectively.

\begin{figure}[!ht]
\begin{center}
\vspace{10mm}
\includegraphics{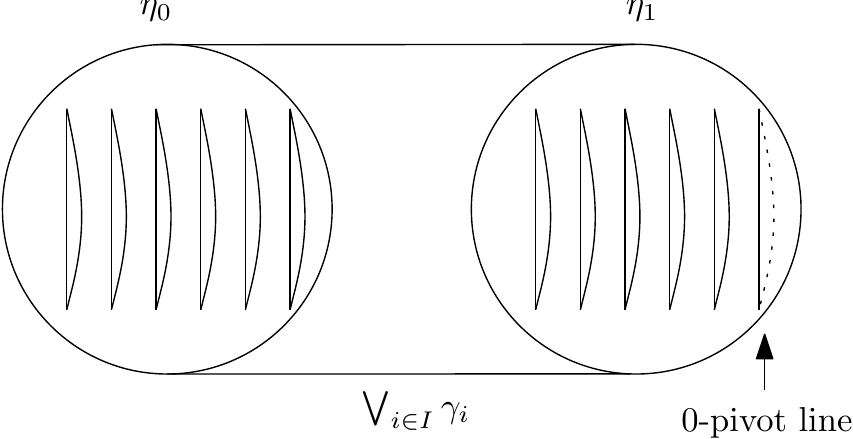}
\vspace{10mm}

\includegraphics{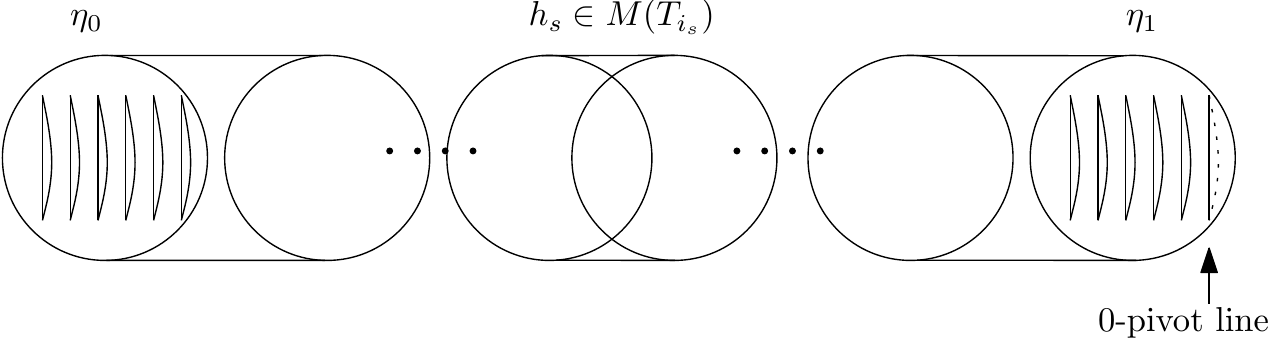}
\vspace{10mm}
\includegraphics{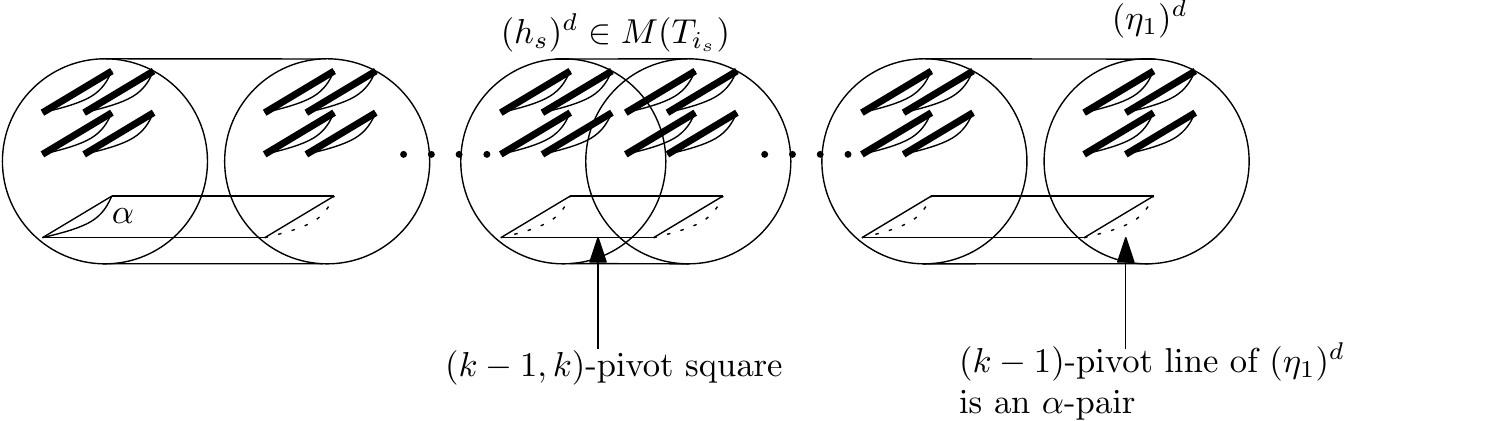}
\end{center}
\caption{Sequence of Matrices and Rotations}\label{fig:add3}
\end{figure}

Notice that the $(0)$-pivot line of $h$ is equal to the $(0)$-pivot line of $\eta_1$. By an induction identical to that given in the proof of Lemma \ref{ceningen} it therefore suffices to show that the $(k-1)$-pivot line of $(\eta_1)^d$ is an $\alpha$-pair, for each $d\in \mathbb{D}_k$ that is a leaf.

Because $\textbf{a}_k \equiv_{\Join_{i\in I} \gamma_i} \textbf{b}_k$, there exist tuples $\textbf{c}_0, \dots, \textbf{c}_q$ such that 
$$\textbf{a}_k = \textbf{c}_0 \equiv_{\gamma_{i_0}} \textbf{c}_1 \dots \textbf{c}_{q-2}\equiv_{\gamma_{q-1}} \textbf{c}_q = \textbf{b}_k$$
Evaluating $\textbf{z}_k$ at each of the $\textbf{c}_s$ gives the sequence of matrices shown in Figure \ref{fig:add3}, where each consecutive pair of matrices corresponding to the tuples $\textbf{c}_s, \textbf{c}_{s+1}$ forms a $T_{i_s}$-matrix which we call $h_{s}$.

Now, take $d \in \mathbb{D}_{k}$ to be a leaf. Notice that $d\in \mathbb{D}_{k+1}$ and that $d$ is a predecessor of a leaf in this tree. For each $h_{i_s}$ in the above sequemce, consider the $T_{i_s}$-matrix $(h_{i_s})^d$. This gives the final sequence of matrices shown in Figure \ref{fig:add3}. By Lemma \ref{lem:pairseq}, every $(k-1)$-supporting line that does not belong to a $(k-1,k)$-pivot square is a constant pair. These are drawn with bold. The sequence of $(k-1,k)$-pivot squares is drawn underneath the constant supporting lines.

As in the 3-dimensional example, we observe that $C(T_i; \alpha)$ holds. It follows from induction that the $(k-1)$-pivot line of $(\eta_1)^d$ is an $\alpha$-pair, as desired.

\end{proof}

Let $f: \A \rightarrow \mathbb{B}$ be a surjective homomorphism with kernel $\pi$. Abusing notation, we denote by $T \join \pi$ the sequence of congruences $(\theta_1 \join \pi, \dots , \theta_k \join \pi )$, and by $f(T)$ the sequence of congruences $(f(\theta_1), \dots,  f(\theta_k) )$. We then have the following 

\begin{prop}
Let $f: \A \rightarrow \mathbb{B}$ be a surjective homomorphism with kernel $\pi$. Then $[T] \join \pi = f^{-1}([f(T \join \pi)])$.

\end{prop}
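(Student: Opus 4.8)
The plan is to prove the two inclusions $[T] \join \pi \leq f^{-1}([f(T \join \pi)])$ and $f^{-1}([f(T \join \pi)]) \leq [T] \join \pi$ separately, using the characterization from the previous theorem that $C(S;\delta)$ holds if and only if $[S] \leq \delta$, together with the explicit generator description $[S] = \Cg(X(S))$. Throughout I will write $\bar{a} = f(a)$ and use that $f$ induces an isomorphism between the interval $[\pi, 1_\A]$ in $\Con(\A)$ and $\Con(\mathbb{B})$, so that $f(\theta_i \join \pi)$ corresponds to $f(\theta_i)$ and pulling back commutes with joins and meets in that interval.

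For the inclusion $f^{-1}([f(T \join \pi)]) \leq [T] \join \pi$, first I would note that since $[T] \leq \theta_i$ for each $i$ and $[T]\join\pi$ lies above $\pi$, it suffices to show $f([T]\join\pi) \geq [f(T\join\pi)]$, i.e. that $C(f(T \join \pi); f([T]\join\pi))$ holds in $\mathbb{B}$. Given a $f(T\join\pi)$-matrix $m$ in $\mathbb{B}$ labeled by $(\bar t, \bar{\mathcal P})$ with all $(0)$-supporting lines being $f([T]\join\pi)$-pairs, I would lift: choose a polynomial $t$ of $\A$ lifting $\bar t$ and, using surjectivity of $f$ and the fact that $\bar{\textbf a}_i \equiv_{f(\theta_i\join\pi)} \bar{\textbf b}_i$, choose tuples $\textbf a_i \equiv_{\theta_i \join \pi} \textbf b_i$ in $\A$ mapping to $\bar{\textbf a}_i, \bar{\textbf b}_i$. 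This produces a $(T\join\pi)$-matrix $h$ in $\A$ with $f(h) = m$. Now $C(T\join\pi; [T]\join\pi)$ holds — this is the key sub-fact, that the commutator is unchanged when each congruence is joined with something below... actually more carefully, I would instead use monotonicity: $[T] \leq [T\join\pi]$, but I need the reverse-flavored statement. The cleaner route is: the $(0)$-supporting lines of $h$ need not be $([T]\join\pi)$-pairs, but after applying the shift-rotation machinery of Lemma~\ref{lem:pairseq} to $h$ one passes to the matrices $h^d$ whose relevant supporting lines are constant, and constant pairs push forward and pull back trivially; then I would track that the pivot line of $h^d$ lands in $X(T\join\pi)$ and hence in $[T\join\pi] = [T]\join\pi$ (using additivity/monotonicity to identify $[T\join\pi]$ with $[T]\join\pi$ — this identification itself needs the homomorphism property restricted to the quotient by $\pi$, so I should prove it en route or cite additivity). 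Pushing forward along $f$, the pivot line of $m$ is then an $f([T]\join\pi)$-pair.

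For the inclusion $[T]\join\pi \leq f^{-1}([f(T\join\pi)])$: equivalently $[T] \leq f^{-1}([f(T\join\pi)])$, i.e. $C(T; f^{-1}([f(T\join\pi)]))$ holds. Set $\delta = f^{-1}([f(T\join\pi)])$; note $\delta \geq \pi$. Take a $T$-matrix $h$ in $\A$ with all $(0)$-supporting lines being $\delta$-pairs. Apply $f$: since $\theta_i \leq \theta_i \join \pi$, the matrix $f(h)$ is a $f(T\join\pi)$-matrix in $\mathbb B$ (the label polynomial and tuple-pairs push forward), and $\langle a,b\rangle \in \delta$ means exactly $\langle \bar a, \bar b\rangle \in [f(T\join\pi)]$, so all $(0)$-supporting lines of $f(h)$ are $[f(T\join\pi)]$-pairs. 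By the theorem characterizing the commutator, $C(f(T\join\pi); [f(T\join\pi)])$ holds, so the $(0)$-pivot line of $f(h)$ is a $[f(T\join\pi)]$-pair; pulling back, the $(0)$-pivot line of $h$ is a $\delta$-pair. Hence $C(T;\delta)$, giving $[T]\leq\delta$ and therefore $[T]\join\pi \leq \delta$ since $\delta\geq\pi$.

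The main obstacle I anticipate is the first inclusion, specifically the bookkeeping needed to go from a matrix in the quotient back to a matrix upstairs and to correctly identify $[T\join\pi]$ with $[T]\join\pi$. The identification $[T\join\pi] = [T]\join\pi$ is essentially a special case of the homomorphism property applied to $f:\A\to\A/\pi$, so care is needed to avoid circularity; the safe approach is to prove the $[T]\join\pi = [T\join\pi]$-type statement directly from additivity (writing $\theta_i\join\pi$ appropriately) or to structure the whole proof so that only the generator description $[S]=\Cg(X(S))$ and the push/pull behavior of $X$ under $f$ are invoked — indeed $f(X(T\join\pi))$ generating $[f(T\join\pi)]$ and $f^{-1}$ of a congruence-generated set behaving well are the real engine, and everything else is monotonicity plus the fact that $\ker f = \pi \leq [T]\join\pi$ so that $[T]\join\pi$ is a "saturated" congruence on which $f$ and $f^{-1}$ are mutually inverse. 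I would therefore emphasize the generator description over re-running the matrix-rotation induction a third time.
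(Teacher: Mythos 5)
Your plan is essentially the paper's proof: reduce via additivity to the case $\theta_i \geq \pi$ (equivalently, replace $T$ by $T \join \pi$), then observe that $f$ carries the generating set $X(T\join\pi)\cup\pi$ of $[T]\join\pi$ onto the generating set $X(f(T\join\pi))$ of $[f(T\join\pi)]$, so both inclusions follow at once from the generator description rather than from re-running the matrix induction. One small correction: $[T\join\pi]$ need not equal $[T]\join\pi$ (e.g.\ all $\theta_i$ trivial and $\pi$ an abelian congruence), since $\pi$ need not lie below $[T\join\pi]$; the identity additivity actually gives, and the one your argument uses, is $[T\join\pi]\join\pi=[T]\join\pi$.
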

\begin{proof}
We argue by generators again. By Proposition \ref{additivity} we have that $[T] \join \pi = [T \join \pi] \join \pi$. So, we assume without loss that $\theta_i \geq \pi$ for $1\leq i \leq k$. Notice that $[T] \join \pi = \Cg(X(T) \union \pi)$ and that $f(X(T) \union \pi) = X(f(T))$. But $[f(T)] = \Cg(X(f(T))$, so $f$ carries a set of generators for $[T] \join \pi$ onto a set of generators for $[f(T)]$. Therefore $f([T] \join \pi) = [f(T)]$ as desired.
\end{proof}

\section{Two Term Commutator}

Kiss showed in \cite{threeremarks} that the term condition definition of the binary commutator is equivalent to a commutator defined with a two term condition. The method of proof uses a difference term. We begin this section be examining the binary case. The equivalence of the commutator defined with the term condition to the commutator defined with a two term condition can be shown using Day terms. This approach easily generalizes to the higher commutator. Recall that for a matrix $h \in M(\theta_0, \dots, \theta_{k-1})$ and $f\in 2^k$ we denote by $h_f$ the vertex of $h$ that is indexed by $f$.

\begin{defn}{(Binary Two Term Centralization)}
Let $\var$ be a congruence modular variety and take $\A \in \var$. For $\alpha, \beta, \delta \in \Con(\A)$ we say that \textbf{$\alpha$ two term centralizes $\beta$ modulo $\delta$} if the following condition holds for all $h,g  \in M(\alpha, \beta)$, where we assume $h$ and $g$ are respectively labeled by $(t(\textbf{z}_0, \textbf{z}_1), ((\textbf{a}_0, \textbf{b}_0), (\textbf{a}_1, \textbf{b}_1))) \textnormal{ and }$
$(s(\textbf{x}_0, \textbf{x}_1), ((\textbf{c}_0, \textbf{d}_0), (\textbf{c}_1, \textbf{d}_1))):$

\begin{enumerate}
\item[] $\langle s(\textbf{c}_0, \textbf{c}_1), t(\textbf{a}_0, \textbf{a}_1) \rangle \in \delta$,
\item[]$\langle s(\textbf{c}_0, \textbf{d}_1), t(\textbf{a}_0, \textbf{b}_1) \rangle \in \delta$,
\item[] $\langle s(\textbf{d}_0, \textbf{c}_1), t(\textbf{b}_0, \textbf{a}_1) \rangle \in \delta$ imply
\item[] $\langle s(\textbf{d}_0, \textbf{d}_1), t(\textbf{b}_0, \textbf{b}_1) \rangle \in \delta$.

\end{enumerate}

This condition is abbreviated as $C_{tt}(\alpha, \beta)$.
\end{defn}

Figure \ref{fig:twoterm1} depicts the condition $C_{tt}(\alpha, \beta)$. Curved lines represent $\delta$-pairs. The top matrix is labeled by $$(t(\textbf{z}_0, \textbf{z}_1), ((\textbf{a}_0, \textbf{b}_0), (\textbf{a}_1, \textbf{b}_1)))$$
and the bottom matrix is labeled by 
$$(s(\textbf{x}_0, \textbf{x}_1), ((\textbf{c}_0, \textbf{d}_0), (\textbf{c}_1, \textbf{d}_1)))$$

\medskip

\begin{figure}[!ht]
\begin{center}
\includegraphics[scale=.9]{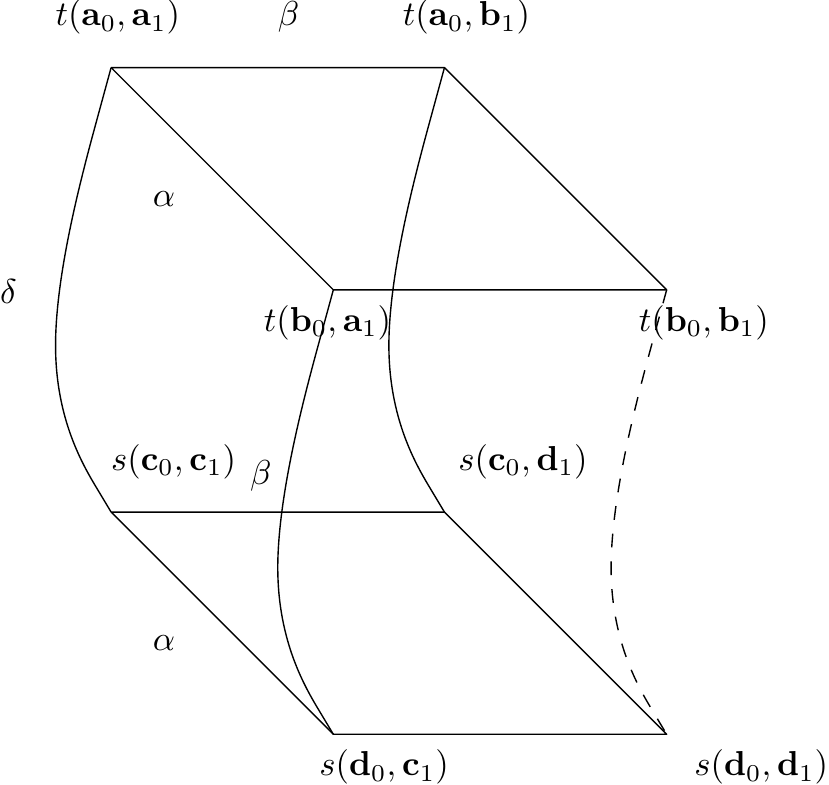}
\end{center}
\caption{Binary Two Term Condition}\label{fig:twoterm1}
\end{figure}

\begin{prop}
$C(\alpha, \beta; \delta)$ holds if and only if $C_{tt}(\alpha, \beta; \delta)$ holds.

\end{prop}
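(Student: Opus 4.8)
The plan is to prove the two directions separately, exploiting the fact that $C_{tt}(\alpha,\beta;\delta)$ is formally a weaker-looking hypothesis (one is allowed to compare the corners of two different matrices via $\delta$) while $C(\alpha,\beta;\delta)$ is the special case in which the two matrices coincide. The direction $C_{tt}(\alpha,\beta;\delta)\Rightarrow C(\alpha,\beta;\delta)$ should be essentially immediate: given a single $(\alpha,\beta)$-matrix $h$ labeled by $(t,((\textbf{a}_0,\textbf{b}_0),(\textbf{a}_1,\textbf{b}_1)))$ all of whose $(0)$-supporting lines are $\delta$-pairs, apply $C_{tt}$ with $g=h$ (that is, $s=t$, $\textbf{c}_i=\textbf{a}_i$, $\textbf{d}_i=\textbf{b}_i$). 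The three hypotheses of $C_{tt}$ then read $\langle t(\textbf{a}_0,\textbf{a}_1),t(\textbf{a}_0,\textbf{a}_1)\rangle\in\delta$ (trivial), $\langle t(\textbf{a}_0,\textbf{b}_1),t(\textbf{a}_0,\textbf{b}_1)\rangle\in\delta$ (trivial), and $\langle t(\textbf{b}_0,\textbf{a}_1),t(\textbf{b}_0,\textbf{a}_1)\rangle\in\delta$ (trivial); wait — these are the wrong pairs. One must instead pair the supporting lines correctly: the $(0)$-supporting line of $h$ is $\langle t(\textbf{a}_0,\textbf{a}_1),t(\textbf{b}_0,\textbf{a}_1)\rangle$, and the $(1)$-supporting lines are $\langle t(\textbf{a}_0,\textbf{a}_1),t(\textbf{a}_0,\textbf{b}_1)\rangle$ etc. So to feed $C_{tt}$ one chooses $g$ to be the matrix whose corners are all equal to the single element $t(\textbf{a}_0,\textbf{a}_1)$ (take $s$ to be that constant and all of $\textbf{c}_i,\textbf{d}_i$ arbitrary), so that the three premises of $C_{tt}$ become exactly the statements that $t(\textbf{a}_0,\textbf{a}_1)$, $t(\textbf{a}_0,\textbf{b}_1)$, $t(\textbf{b}_0,\textbf{a}_1)$ are each $\delta$-related to $t(\textbf{a}_0,\textbf{a}_1)$. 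By transitivity of $\delta$ these follow from the hypothesis that the two $(0)$- and $(1)$-supporting lines of $h$ are $\delta$-pairs (the $(1)$-supporting line $\langle t(\textbf{a}_0,\textbf{a}_1),t(\textbf{a}_0,\textbf{b}_1)\rangle$ and the other $(1)$-supporting line, combined with the $(0)$-supporting line). Actually cleaner: by symmetry (Section 3) $C(\alpha,\beta;\delta)$ is equivalent to its $(1)$-coordinate version, and for the $(1)$-version only the $(1)$-supporting line $\langle t(\textbf{a}_0,\textbf{a}_1),t(\textbf{a}_0,\textbf{b}_1)\rangle$ is assumed a $\delta$-pair; then taking $g$ constant equal to $t(\textbf{a}_0,\textbf{a}_1)$ makes the first premise of $C_{tt}$ trivial, the second premise exactly that supporting line, the third premise $\langle t(\textbf{a}_0,\textbf{a}_1),t(\textbf{b}_0,\textbf{a}_1)\rangle\in\delta$ which is the $(0)$-supporting line — hmm, that reintroduces a hypothesis we don't have. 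The robust route is therefore: assume all three non-pivot lines of $h$ are $\delta$-pairs (which is more than $C$ demands but, by symmetry of the commutator, gives the same $\delta$), take $g$ constant $=t(\textbf{a}_0,\textbf{a}_1)$, verify the three premises by transitivity, and conclude the pivot corner is $\delta$-related to $t(\textbf{a}_0,\textbf{a}_1)$, hence (again by transitivity with a supporting line) the $(0)$-pivot line of $h$ is a $\delta$-pair.

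For the harder direction, $C(\alpha,\beta;\delta)\Rightarrow C_{tt}(\alpha,\beta;\delta)$, I would use Day terms to glue the two matrices $h$ and $g$ into a single $(\alpha,\beta)$-matrix, mimicking the shift-rotation construction of Lemma \ref{lem:newop} and the proof technique of Proposition \ref{prop:shift}. Concretely, given $h$ labeled by $(t,((\textbf{a}_0,\textbf{b}_0),(\textbf{a}_1,\textbf{b}_1)))$ and $g$ labeled by $(s,((\textbf{c}_0,\textbf{d}_0),(\textbf{c}_1,\textbf{d}_1)))$, form for each $e\in n+1$ a new polynomial by applying $m_e$ to four copies of ``$t$ combined with $s$'' with the $P_i$-pairs suitably concatenated, so that the resulting $(\alpha,\beta)$-matrix has the property that its $(0)$- and $(1)$-supporting lines are built out of pairs of the form $\langle s(\cdot),t(\cdot)\rangle$ occurring in the hypotheses of $C_{tt}$ together with constant pairs, while its pivot corner controls $\langle s(\textbf{d}_0,\textbf{d}_1),t(\textbf{b}_0,\textbf{b}_1)\rangle$ via Proposition \ref{prop:shift}. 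Applying $C(\alpha,\beta;\delta)$ to each of these $n+1$ matrices then forces, through Proposition \ref{prop:shift} (the equivalence of $\langle a,c\rangle\in\delta$ with all the $\langle m_e(a,a,c,c),m_e(a,b,d,c)\rangle\in\delta$), that $\langle s(\textbf{d}_0,\textbf{d}_1),t(\textbf{b}_0,\textbf{b}_1)\rangle\in\delta$, which is precisely the conclusion of $C_{tt}$.

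The main obstacle is getting the combined matrix right: one must choose the Day-term combination and the concatenated $P_i$-pairs so that simultaneously (a) the object really is an $(\alpha,\beta)$-matrix — i.e.\ the polynomial is a genuine polynomial and the pairs are $\alpha$- resp.\ $\beta$-related, which is automatic once everything is a concatenation of $\textbf{a}_i,\textbf{b}_i,\textbf{c}_i,\textbf{d}_i$ in the right slots; (b) every $(0)$-supporting line of the combined matrix is forced to be a $\delta$-pair using \emph{only} the three $C_{tt}$-hypotheses plus reflexivity/symmetry/transitivity of $\delta$ and identity (1) of Proposition \ref{prop:day} ($m_e(x,y,y,x)\approx x$) to collapse various entries to constants; and (c) the pivot corner of the combined matrix is exactly $m_e$ applied to the four ``mixed'' corners, so that Proposition \ref{prop:shift} applies with $\langle b,d\rangle$ instantiated as $\langle t(\textbf{b}_0,\textbf{b}_1),s(\textbf{d}_0,\textbf{d}_1)\rangle$. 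This is the same bookkeeping that appears in the shift-rotation lemma, so once the analogue of the displayed matrix computation in the proof of Lemma \ref{lem:newop} is carried out for this ``two-matrix'' combination, the rest is routine; I expect the only real work is writing down the four arguments $t_0,t_1,t_2,t_3$ and the concatenated pairs explicitly and checking the three $C_{tt}$-premises line up with the supporting lines.
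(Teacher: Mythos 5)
Your ``robust route'' for the direction $C_{tt}(\alpha,\beta;\delta)\Rightarrow C(\alpha,\beta;\delta)$ has a genuine gap. You end up assuming that \emph{all} non-pivot vertices of $h$ are pairwise $\delta$-related, whereas $C(\alpha,\beta;\delta)$ hands you only the single $(0)$-supporting line $\langle t(\textbf{a}_0,\textbf{a}_1),t(\textbf{b}_0,\textbf{a}_1)\rangle\in\delta$. The justification ``by symmetry of the commutator, gives the same $\delta$'' does not repair this: Proposition \ref{prop:centraliff} transfers the term condition between coordinates for a fixed $\delta$, but it does not let you add the missing hypothesis $\langle t(\textbf{a}_0,\textbf{a}_1),t(\textbf{a}_0,\textbf{b}_1)\rangle\in\delta$ to the particular matrix you were given. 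The implication ``all three non-pivot corners $\delta$-related $\Rightarrow$ pivot corner $\delta$-related'' is, for a fixed $\delta$, strictly weaker than $C(\alpha,\beta;\delta)$, so proving it does not prove the proposition. The fix is to choose the second matrix not constant but collapsed only in the $0$-direction: take $g$ labeled by $(t,((\textbf{a}_0,\textbf{a}_0),(\textbf{a}_1,\textbf{b}_1)))$, so that $g=\left[\begin{smallmatrix} a & b\\ a & b\end{smallmatrix}\right]$ when $h=\left[\begin{smallmatrix} a & b\\ c & d\end{smallmatrix}\right]$. Then two of the three $C_{tt}$-premises are reflexive, the third is exactly the given supporting line $\langle a,c\rangle\in\delta$, and the conclusion is exactly the pivot line $\langle b,d\rangle\in\delta$. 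This is precisely what the paper's Figure \ref{fig:twoterm2} depicts.

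Your plan for the harder direction $C(\alpha,\beta;\delta)\Rightarrow C_{tt}(\alpha,\beta;\delta)$ is the paper's argument in outline: the paper forms, for each $e\in n+1$, the matrix $q_e$ labeled by $p_e(\textbf{y}_0,\textbf{y}_1)=m_e\bigl(t(\textbf{y}_0^0,\textbf{y}_1^0),t(\textbf{y}_0^1,\textbf{y}_1^0),s(\textbf{y}_0^2,\textbf{y}_1^1),s(\textbf{y}_0^3,\textbf{y}_1^1)\bigr)$ with concatenated pairs chosen as in the shift rotation, so that one column of $q_e$ is a $\delta$-pair by Proposition \ref{prop:shift} applied to two of the $C_{tt}$-premises, and then $C(\alpha,\beta;\delta)$ together with a second application of Proposition \ref{prop:shift} (using the remaining premise) yields $\langle s(\textbf{d}_0,\textbf{d}_1),t(\textbf{b}_0,\textbf{b}_1)\rangle\in\delta$. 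You correctly identify this construction and what must be checked, but you leave the decisive computation unexecuted; that part is routine bookkeeping of exactly the kind done in Lemma \ref{lem:newop}, so the only substantive defect in the proposal is the first direction described above.
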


\begin{proof}

Suppose $C_{tt}(\alpha, \beta; \delta)$ holds. To show that $C(\alpha, \beta; \delta)$ holds we take $\left[ \begin{array}{cc}
				a&b\\
				c&d\\
				\end{array}
				\right] \in M(\alpha, \beta)$ such that $\langle a, c \rangle \in \delta$. Figure \ref{fig:twoterm2} demonstrates that if $C_{tt}(\alpha, \beta)$ holds then $\langle b,d \rangle \in \delta$.

\begin{figure}[!ht]
\begin{center}
\includegraphics[scale=.9]{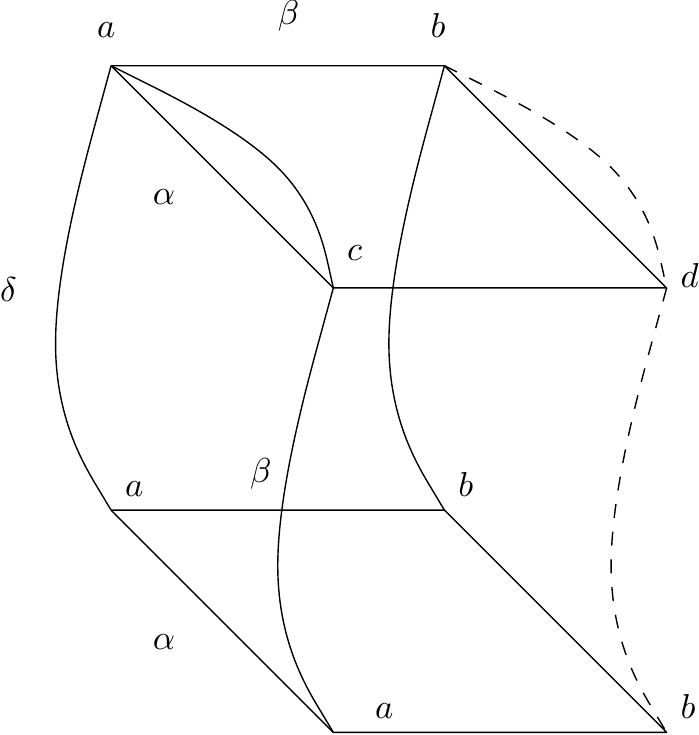}
\caption{$C_{tt}(\alpha, \beta; \delta) $ implies $C(\alpha, \beta; \delta)$}\label{fig:twoterm2}
\end{center}
\end{figure}

Suppose now that $C(\alpha, \beta; \delta)$ holds. Let $g,h \in M(\alpha, \beta)$ be labeled by

 $(s(\textbf{z}_0, \textbf{z}_1), ((\textbf{a}_0, \textbf{b}_0), (\textbf{a}_1, \textbf{b}_1)))$ and 
$(t(\textbf{x}_0, \textbf{x}_1), ((\textbf{c}_0, \textbf{d}_0), (\textbf{c}_1, \textbf{d}_1)))$
respectively. Suppose that 
\begin{enumerate}
\item $\langle s(\textbf{a}_0, \textbf{a}_1), t(\textbf{c}_0, \textbf{c}_1) \rangle = \langle a,e \rangle \in \delta$
\item $\langle s(\textbf{b}_0, \textbf{a}_1), t(\textbf{d}_0, \textbf{c}_1) \rangle = \langle  b,f \rangle \in \delta$
\item $\langle s(\textbf{a}_0, \textbf{b}_1), t(\textbf{c}_0, \textbf{d}_1) \rangle= \langle  c,g \rangle  \in \delta$

\end{enumerate}
We need to show that  $\langle s(\textbf{d}_0, \textbf{d}_1), t(\textbf{b}_0, \textbf{b}_1) \rangle = \langle d,h \rangle \in \delta$.

 We construct a matrix that is similar to a shift rotation. For each $e \in n+1$ consider the polynomial 
$$p_e(\textbf{y}_0, \textbf{y}_1) = m_e(t(\textbf{y}_0^0, \textbf{y}_1^0), t(\textbf{y}_0^1, \textbf{y}_1^0), s(\textbf{y}_0^2, \textbf{y}_1^1), s(\textbf{y}_0^3, \textbf{y}_1^1))$$

where $\textbf{y}_0 = \textbf{y}_0^0\concat \textbf{y}_0^1 \concat \textbf{y}_0^2\concat \textbf{y}_0^3$ and $\textbf{y}_1 = \textbf{y}_1^0\concat \textbf{y}_1^1$.

Set 
\begin{enumerate}
\item
$\textbf{u}_0 = \textbf{b}_0 \concat \textbf{b}_0 \concat \textbf{d}_0 \concat \textbf{d}_0$
\item
$\textbf{v}_0 = \textbf{b}_0 \concat \textbf{a}_0 \concat \textbf{c}_0 \concat \textbf{d}_0$

\item
$\textbf{u}_1 = \textbf{a}_1 \concat \textbf{c}_1$

\item
$\textbf{v}_1 = \textbf{b}_1 \concat \textbf{d}_1$

\end{enumerate}

Let $ q_e \in M(\alpha, \beta)$ be labeled by $(p_e, ((\textbf{u}_0, \textbf{v}_0), (\textbf{u}_1, \textbf{v}_1)))$. The relationship between $h,g$ and $q_e$ is shown in Figure $\ref{fig:twoterm3}$.

\begin{figure}[!ht]
\begin{center}
\includegraphics[scale=1]{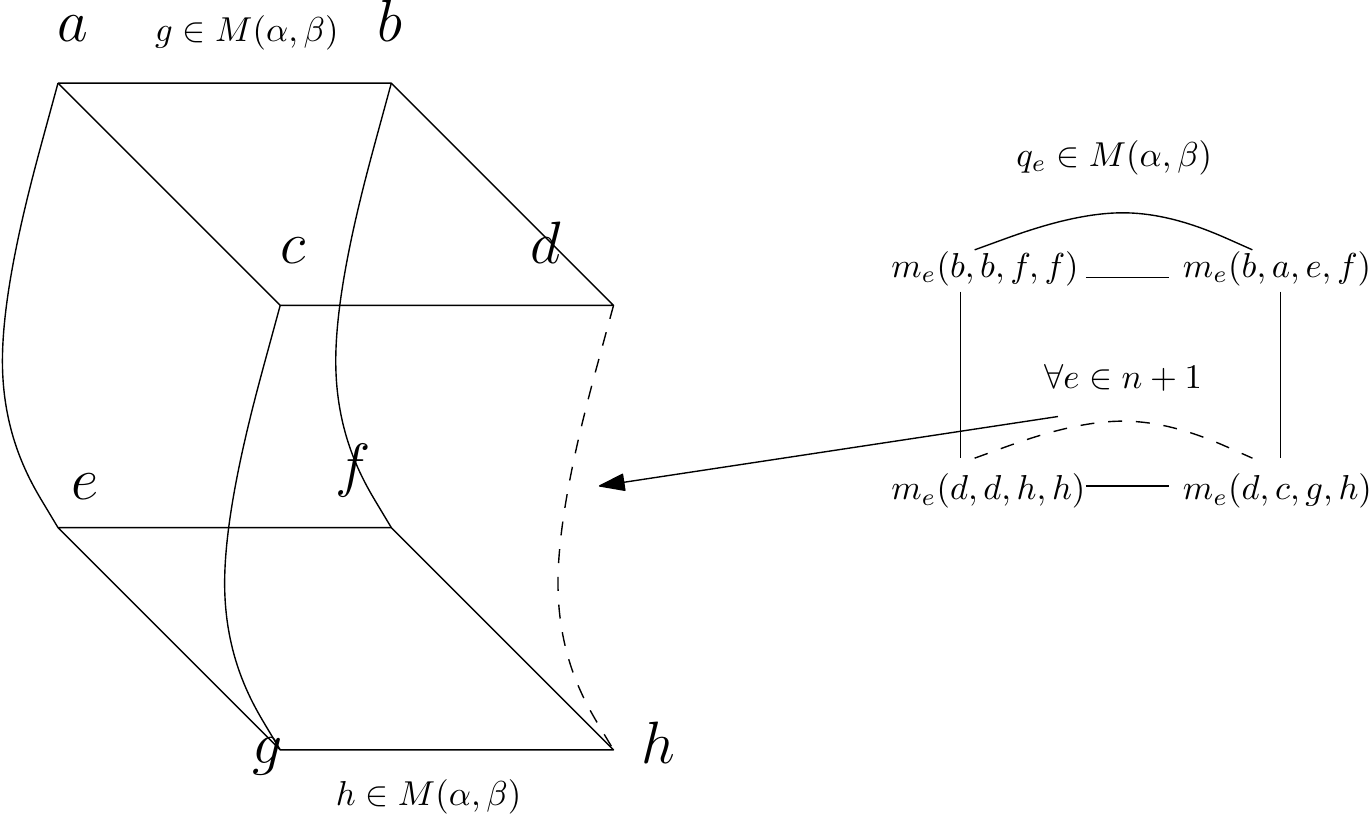}
\end{center}
\caption{$C(\alpha, \beta; \delta)$ implies $C_{tt}(\alpha, \beta; \delta) $  }\label{fig:twoterm3}
\end{figure}
Proposition \ref{prop:shift} show that $\langle m_e(b,a,e,f), m_e(b,b,f,f) \rangle \in \delta$ because $\langle a, e \rangle $ and $\langle b, f \rangle$ are $\delta$-pairs. We assume that $C(\alpha, \beta; \delta)$ holds, so $\langle m_e(d,c,g,h), m_e(d,d,g,h) \rangle \in \delta$. This holds for all $e \in n+1$ so applying Proposition \ref{prop:shift} again shows that $\langle d,h \rangle \in \delta$.  

\end{proof}

We now generalize this notion to the higher commutator.
\begin{defn}[Two Term Centralization]
Let $\var$ be a congruence modular variety and take $\A \in \var$. For $T = (\theta_0, \dots, \theta_{k-1}) \in \Con(\A)^k$ and $\delta \in \Con(\A)$ we say that \textbf{$T$ is two term centralized at $j$ modulo $\delta$} if the following condition holds for all $h, g \in M(T)$:

\begin{enumerate}
\item If $h_f \equiv_\delta g_f$ for all $f \in 2^k$ except the function that takes constant value $1$ then $h_f \equiv g_f$ for all $f\in 2^k$

\end{enumerate}

This condition is abbreviated as $C_{tt}(T; \delta)$.
\end{defn}

\begin{prop}
$C(T; \delta)$ holds if and only if $C_{tt}(T; \delta)$ holds.

\end{prop}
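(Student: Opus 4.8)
The plan is to mirror the binary argument verbatim at the level of matrices, replacing ``row/column'' reasoning by the general cross-section machinery already set up in Section~2. One direction is trivial: if $C_{tt}(T;\delta)$ holds, then $C(T;\delta)$ follows by taking $g$ to be a matrix that is constant along the $j$-th coordinate and agrees with a given $h\in M(T)$ on the $j$-supporting lines. Concretely, given $h\in M(T)$ all of whose $(j)$-supporting lines are $\delta$-pairs (assume $j=k-1$ by the symmetry established at the end of Section~3), let $g\in M(T)$ be the matrix whose label has the same polynomial $t$ but with the pair $P_{k-1}=(\textbf{a}_{k-1},\textbf{b}_{k-1})$ replaced by $(\textbf{a}_{k-1},\textbf{a}_{k-1})$; this is a legitimate $T$-matrix since $\textbf{a}_{k-1}\equiv_{\theta_{k-1}}\textbf{a}_{k-1}$. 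Then $g$ is constant across the $(k-1)$-direction, so $h_f\equiv_\delta g_f$ for every $f$ with $f(k-1)=0$ automatically, and for $f$ with $f(k-1)=1$ the vertex $g_f$ equals the ``$0$-end'' vertex of the corresponding $(k-1)$-cross-section line of $h$, hence $h_f\equiv_\delta g_f$ is exactly the hypothesis that that supporting line is a $\delta$-pair --- except for the constant-$1$ function, where $h_f\equiv_\delta g_f$ is precisely the conclusion that the $(k-1)$-pivot line is a $\delta$-pair. So $C_{tt}(T;\delta)$ delivers $C(T;\delta)$.

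For the converse --- $C(T;\delta)$ implies $C_{tt}(T;\delta)$ --- I would generalize the polynomial $p_e$ from the binary proof. Let $h,g\in M(T)$ be labeled by $(t(\textbf{z}_0,\dots,\textbf{z}_{k-1}),\mathcal{P})$ and $(s(\textbf{x}_0,\dots,\textbf{x}_{k-1}),\mathcal{Q})$ with $\mathcal{P}=((\textbf{a}_0,\textbf{b}_0),\dots)$ and $\mathcal{Q}=((\textbf{c}_0,\textbf{d}_0),\dots)$, and suppose $h_f\equiv_\delta g_f$ for all $f\neq\textbf{1}$. For each $e\in n+1$ build a $T$-matrix $q_e$ labeled by a polynomial of the shape
$$
p_e(\textbf{y}_0,\dots,\textbf{y}_{k-1}) = m_e\bigl(t|_{(0)}(\cdots),\ t|_{(1)}(\cdots),\ s|_{(2)}(\cdots),\ s|_{(3)}(\cdots)\bigr),
$$
where each $\textbf{y}_i$ for $i<k-1$ is split into four blocks $\textbf{y}_i^0\concat\textbf{y}_i^1\concat\textbf{y}_i^2\concat\textbf{y}_i^3$ fed into the four $m_e$-arguments (the first two coming from $t$, the last two from $s$), and $\textbf{y}_{k-1}$ is split into two blocks, one going into the $t$-arguments and one into the $s$-arguments, exactly as $\textbf{y}_1=\textbf{y}_1^0\concat\textbf{y}_1^1$ in the binary case. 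Choose the tuples defining the pairs of $q_e$'s label so that, along the $(k-1)$-direction, the bottom vertices of $q_e$ reproduce $m_e$ applied to the four ``$t$-vs-$s$'' comparison data of $h$ and $g$, and so that the $(k-1)$-supporting structure of $q_e$ is controlled by Proposition~\ref{prop:shift} applied to the $\delta$-pairs $h_f\equiv_\delta g_f$ ($f\neq\textbf{1}$). The point is that every $(k-1)$-supporting line of $q_e$ becomes, via Proposition~\ref{prop:shift}, a $\delta$-pair precisely because the corresponding vertices of $h$ and $g$ are $\delta$-related; then $C(T;\delta)$ (in the form $C(T;k-1;\delta)$, using symmetry) forces the $(k-1)$-pivot line of $q_e$ to be a $\delta$-pair; and since this holds for every $e\in n+1$, a final application of Proposition~\ref{prop:shift} yields $h_{\textbf{1}}\equiv_\delta g_{\textbf{1}}$.

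The main obstacle is purely bookkeeping: writing down the correct splitting of each variable tuple $\textbf{y}_i$ and the correct pairs $(\textbf{u}_i,\textbf{v}_i)$ so that $q_e$ is genuinely a $T$-matrix (each pair $\theta_i$-related, which is automatic once the blocks are built from the $\theta_i$-related pairs of $h$ and $g$) and so that its vertices are exactly the $m_e$-combinations one wants. In the binary case this is Figure~\ref{fig:twoterm3}; in general one checks it cross-section square by cross-section square, using part~(1) of Lemma~\ref{lem:newop}-style computations together with identity~(1) of Proposition~\ref{prop:day} (so that the ``diagonal'' $m_e(x,y,y,x)\approx x$ collapses the supporting data to constants or to the raw $\delta$-comparisons). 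Once the combinatorial set-up is fixed, the two invocations of Proposition~\ref{prop:shift} and the single invocation of $C(T;\delta)$ close the argument exactly as in the binary proof; I do not expect any genuinely new idea beyond organizing the indices.
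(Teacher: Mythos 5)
Your proposal matches the paper's proof in both structure and tools: the easy direction via a companion matrix $g$ that is constant in the pivot direction, and the converse via matrices $q_e$ built from $m_e(t,t,s,s)$, with Proposition~\ref{prop:shift} applied once to each supporting line, $C(T;\delta)$ applied to $q_e$, and Proposition~\ref{prop:shift} applied once more at the end. The one concrete point you must correct is the block-splitting, which you have inverted. The four-way split $\textbf{y}^0\concat\textbf{y}^1\concat\textbf{y}^2\concat\textbf{y}^3$, together with the asymmetric pair $\textbf{u}=\textbf{b}\concat\textbf{b}\concat\textbf{d}\concat\textbf{d}$ versus $\textbf{v}=\textbf{b}\concat\textbf{a}\concat\textbf{c}\concat\textbf{d}$, must be placed on the \emph{single} coordinate along which you read off supporting and pivot lines and invoke $C(T;j;\delta)$; every other coordinate gets the two-way split $\textbf{a}\concat\textbf{c}$ versus $\textbf{b}\concat\textbf{d}$. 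You assign four blocks to all $i<k-1$ and two blocks to $k-1$, yet apply $C(T;k-1;\delta)$: with that assignment the two ends of a $(k-1)$-line of $q_e$ have the shape $m_e(x,x,y,y)$ and $m_e(x',x',y',y')$, which is not the shift form $\langle m_e(a,a,c,c),\, m_e(a,b,d,c)\rangle$ required by Proposition~\ref{prop:shift}, so neither the claim that the supporting lines of $q_e$ are $\delta$-pairs nor the final extraction of $h_{\textbf{1}}\equiv_\delta g_{\textbf{1}}$ goes through as written. (In the binary case the pivot coordinate happens to be the unique index $<k-1$, which is presumably the source of the inversion.) Once the roles of the coordinates are swapped, your argument is exactly the paper's.
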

\begin{proof}
Suppose $C_{tt}(T; \delta)$ holds. To show that $C(T; \delta)$ holds, take $h \in M(T)$ with $0$-supporting lines $\langle a_i, b_i \rangle$ for $ i \in 2^{k-1}-1$ and $0$-pivot line $\langle c, d \rangle$. Suppose that each $0$-supporting line $\langle a_i, b_i \rangle$ is a $\delta$-pair. There is a $g \in M(T)$ with $0$-supporting lines $\langle a_i, a_i \rangle $ for $i \in 2^{k-1}-1$ and $0$-pivot line $\langle c, c \rangle$. We have that $h_f \equiv_\delta g_f$ for all $f \in 2^k$ except possibly the constant function with value $1$. The assumption that $C_{tt}(T; \delta)$ implies that $h_f \equiv_\delta g_f$ for all $f \in 2^k$. In particular, $\langle c,d \rangle  \in \delta$. This is shown in Figure \ref{fig:twoterm4}.

\begin{figure}[!ht]
\begin{center}
\includegraphics{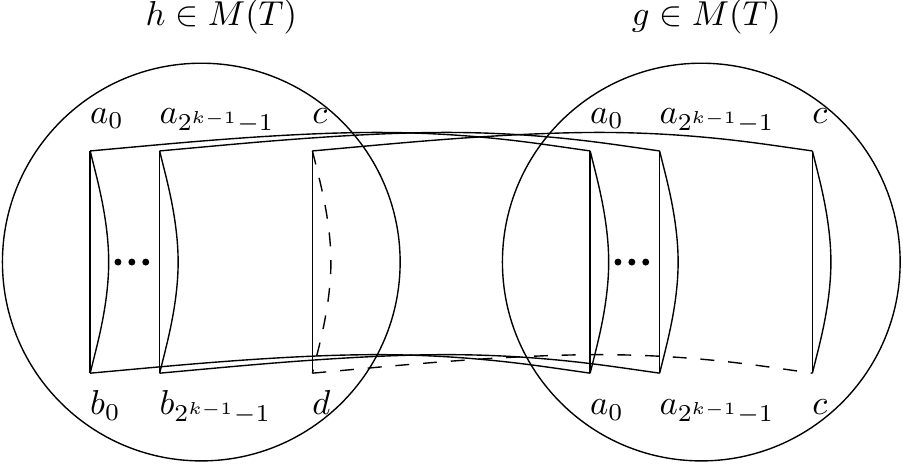}
\end{center}
\caption{$C_{tt}(T; \delta)$ implies $C(T; \delta)$}\label{fig:twoterm4}
\end{figure}

Suppose now that $C(T; \delta)$ holds. Take $h, g\in M(T)$ such that $h_f \equiv_\delta g_f$ for all $f \in 2^k$ except the the function that takes constant value $1$. We want to show that $h_f \equiv g_f$ for all $f\in 2^k$. 

Suppose that $h$ and $g$ are labeled by $$(t(\textbf{z}_0, \dots,  \textbf{z}_{k-1}), ((\textbf{a}_0, \textbf{b}_0), \dots,  (\textbf{a}_{k-1}, \textbf{b}_{k-1}))) \textnormal{ and }$$
$$(s(\textbf{x}_0, \dots,  \textbf{x}_{k-1}), ((\textbf{c}_0, \textbf{d}_0), \dots (\textbf{c}_{k-1}, \textbf{d}_{k-1})))$$
respectively. Choose $i \in k$. Figure \ref{fig:twoterm5} shows the $i$-cross-section lines of $h$ and $g$, with vertices that are $\delta$-pairs connected by curved lines.

\begin{figure}[!ht]
\begin{center}
\includegraphics{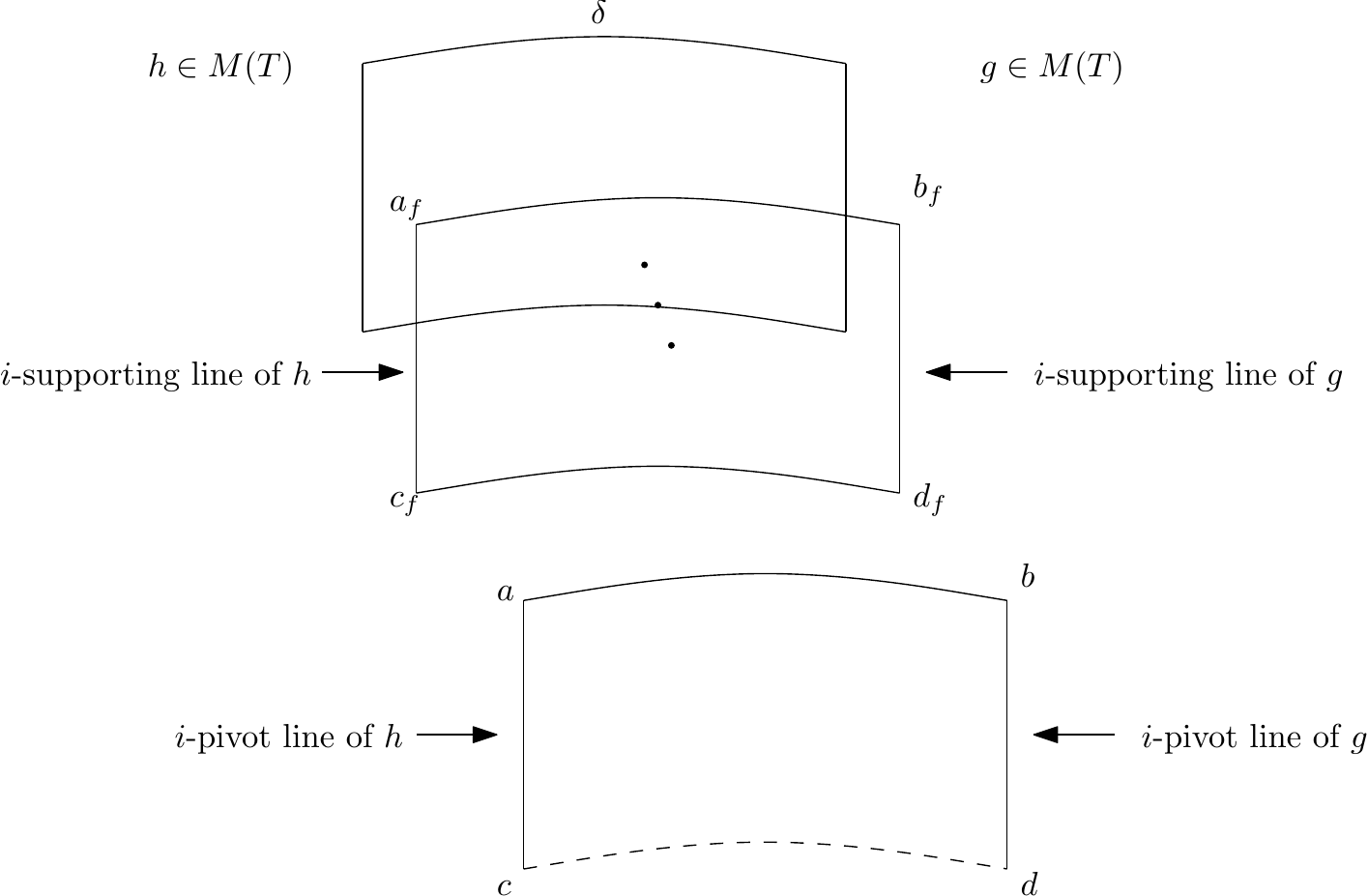}
\end{center}
\caption{$C(T; \delta)$ implies $C_{tt}(T; \delta)$ }\label{fig:twoterm5}
\end{figure}

\begin{figure}[!ht]
\begin{center}
\includegraphics[scale=1]{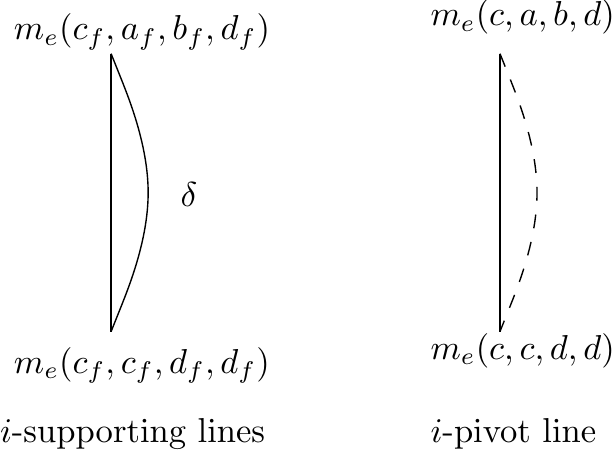}
\end{center}
\caption{Supporting and Pivot Lines}\label{fig:twoterm6}
\end{figure}

We label the $i$-pivot line of $h$ as the pair $\langle a, c \rangle$ and the $i$-pivot line of $g$ as the pair $\langle b,d \rangle$. For a function $f \in 2^{k\setminus \{i\}}$ the supporting lines $h_f$ and $g_f$ are named $\langle a_f, c_f \rangle $ and $\langle b_f, d_f \rangle$ respectively. We want to show that $\langle c, d \rangle \in \delta$. By Proposition \ref{prop:shift}, it suffices to show that $\langle m_e(c,a,b,d) , m_e(c,c,d,d) \rangle \in \delta$ for all $e \in n+1$. This will follow from the assumption that $C(T; \delta)$ holds and the existence of a $T$-matrix $q_e$ with the $i$-cross-section lines shown in Figure \ref{fig:twoterm6}.

Indeed, for each $e \in n+1$ consider the polynomial $p_e(\textbf{y}_0, \dots,  \textbf{y}_{k-1}) = $
$$m_e\bigg(t(\textbf{y}_0^0, \dots, \textbf{y}_i^0,\dots \textbf{y}_{k-1}^0), t(\textbf{y}_0^0, \dots, \textbf{y}_i^1, \dots \textbf{y}_{k-1}^0), $$ $$s(\textbf{y}_0^1, \dots, \textbf{y}_i^2,\dots \textbf{y}_{k-1}^1), s(\textbf{y}_0^1, \dots, \textbf{y}_i^3, \dots \textbf{y}_{k-1}^1)\bigg)$$
where $\textbf{y}_i = \textbf{y}_i^0\concat \textbf{y}_i^1 \concat \textbf{y}_i^2\concat \textbf{y}_i^3$ and $\textbf{y}_j = \textbf{y}_j^0\concat \textbf{y}_j^1$ for $j \neq i$.

Set 
\begin{enumerate}
\item[]
$\textbf{u}_i = \textbf{b}_i \concat \textbf{b}_i \concat \textbf{d}_i \concat \textbf{d}_i$
\item[]
$\textbf{v}_i = \textbf{b}_i \concat \textbf{a}_i \concat \textbf{c}_i \concat \textbf{d}_i$
\end{enumerate}
and for $j \neq i$ 

\begin{enumerate}
\item[]
$\textbf{u}_j = \textbf{a}_j \concat \textbf{c}_j$

\item[]
$\textbf{v}_j = \textbf{b}_j \concat \textbf{d}_j$

\end{enumerate}

Let $ q_e \in M(T)$ be labeled by $(p_e, ((\textbf{u}_0, \textbf{v}_0), \dots,  (\textbf{u}_{k-1}, \textbf{v}_{k-1})))$. By Proposition \ref{prop:shift}, every $i$-supporting line of $q_e$ is a $\delta$-pair. We assume that $C(T; \delta)$ holds, so the $i$-pivot line of $q_e$ is a $\delta$-pair. This holds for all $e \in n+1$, so $\langle c, d \rangle \in \delta$ as desired.

\end{proof}

\bibliographystyle{plain}	
\bibliography{refs}		

\begin{thebibliography}{10}

\bibitem{numfinalg}
Erhard Aichinger, Peter Mayr, and Ralph McKenzie.
\newblock On the number of finite algebraic structures.
\newblock {\em J. Eur. Math. Soc. (JEMS)}, 16(8):1673--1686, 2014.

\bibitem{aichmud}
Erhard Aichinger and Neboj{\v s}a Mudrinski.
\newblock Some applications of higher commutators in {M}al'cev algebras.
\newblock {\em Algebra Universalis}, 63(4):367--403, 2010.

\bibitem{mayrbenz}
Wolfram Bentz and Peter Mayr.
\newblock Supernilpotence prevents dualizability.
\newblock {\em J. Aust. Math. Soc.}, 96(1):1--24, 2014.

\bibitem{buldef}
Andrei Bulatov.
\newblock On the number of finite {M}al$\prime$tsev algebras.
\newblock In {\em Contributions to general algebra, 13 ({V}elk\'e {K}arlovice,
  1999/{D}resden, 2000)}, pages 41--54. Heyn, Klagenfurt, 2001.

\bibitem{dayterms}
Alan Day.
\newblock A characterization of modularity for congruence lattices of algebras.
\newblock {\em Canad. Math. Bull.}, 12:167--173, 1969.

\bibitem{fm}
Ralph Freese and Ralph McKenzie.
\newblock {\em Commutator theory for congruence modular varieties}, volume 125
  of {\em London Mathematical Society Lecture Note Series}.
\newblock Cambridge University Press, Cambridge, 1987.

\bibitem{gumm}
H.~Peter Gumm.
\newblock Geometrical methods in congruence modular algebras.
\newblock {\em Mem. Amer. Math. Soc.}, 45(286):viii+79, 1983.

\bibitem{hh}
Joachim Hagemann and Christian Herrmann.
\newblock A concrete ideal multiplication for algebraic systems and its
  relation to congruence distributivity.
\newblock {\em Arch. Math. (Basel)}, 32(3):234--245, 1979.

\bibitem{smallfreespec}
Keith~A. Kearnes.
\newblock Congruence modular varieties with small free spectra.
\newblock {\em Algebra Universalis}, 42(3):165--181, 1999.

\bibitem{threeremarks}
Emil~W. Kiss.
\newblock Three remarks on the modular commutator.
\newblock {\em Algebra Universalis}, 29(4):455--476, 1992.

\bibitem{lyndonnilgroup}
R.~C. Lyndon.
\newblock Two notes on nilpotent groups.
\newblock {\em Proc. Amer. Math. Soc.}, 3:579--583, 1952.

\bibitem{orsalrel}
Jakub Opr{\v s}al.
\newblock A relational description of higher commutators in {M}al'cev
  varieties.
\newblock {\em Algebra Universalis}, 76(3):367--383, 2016.

\bibitem{jdhsmith}
Jonathan D.~H. Smith.
\newblock {\em Mal'cev varieties}.
\newblock Lecture Notes in Mathematics, Vol. 554. Springer-Verlag, Berlin-New
  York, 1976.

\end{thebibliography}

\end{document}